\newtheorem{thm}{Theorem}[section]
\newtheorem{lem}[thm]{Lemma}
\newtheorem{prop}[thm]{Proposition}
\newtheorem{cor}[thm]{Corollary}
\theoremstyle{definition}
\newtheorem{NN}[thm]{}
\theoremstyle{definition}
\newtheorem{df}[thm]{Definition}
\theoremstyle{definition}
\newtheorem{rem}[thm]{Remark}
\theoremstyle{definition}
\newtheorem{exm}[thm]{Example}
\renewcommand{\phi}{\varphi}
\newcommand{\N}{\mathbb{N}}
\newcommand{\R}{\mathbb{R}}
\newcommand{\C}{\mathbb{C}}
\newcommand{\T}{\mathbb{T}}
\numberwithin{equation}{section}
\newcommand{\Aff}{\operatorname{Aff}}
\newcommand{\id}{\operatorname{id}}
\newcommand{\morp}{contractive completely positive linear map}
\newcommand{\hm}{homomorphism}
\newcommand{\dt}{\delta}
\newcommand{\ep}{\varepsilon}
\newcommand{\td}{\tilde}
\newcommand{\0}{\mbox{\large \bf 0}}
\newcommand{\la}{\langle}
\newcommand{\ra}{\rangle}
\newcommand{\andeqn}{\,\,\,{\rm and}\,\,\,}
\newcommand{\rforal}{\,\,\,{\rm for\,\,\,all}\,\,\,}
\newcommand{\CA}{$C^*$-algebra}
\newcommand{\SCA}{$C^*$-subalgebra}
\newcommand{\af}{{\alpha}}
\newcommand{\diag}{{\rm diag}}
\newcommand{\wilog}{without loss of generality}
\newcommand{\Wlog}{Without loss of generality}
\newcommand{\beq}{\begin{eqnarray}}
\newcommand{\eneq}{\end{eqnarray}}
\newcommand{\tforal}{\,\,\,\text{for\,\,\,all}\,\,\,}
\newcommand{\tand}{\,\,\,\text{and}\,\,\,}
\newcommand{\zo}{{\cal Z}_0}
\newcommand{\LAff}{{\rm LAff}}
\title{Unitary groups and augmented Cuntz semigroups 
of separable simple ${\cal Z}$-stable \CA s}
\author{Huaxin Lin }
\date{
}
\begin{document}

\maketitle

\begin{abstract}
Let $A$ be a separable simple exact ${\cal Z}$-stable \CA. 
We show that the unitary group of ${\td A}$ has the cancellation property.
If $A$ has continuous scale then  
the Cuntz semigroup of $\td A$ has 
strict comparison property 
and a weak cancellation
property. 
Let $C$ be a 1-dimensional non-commutative CW complex with $K_1(C)=\{0\}.$ 
Suppose that $\lambda: {\rm Cu}^\sim(C)\to {\rm Cu}^\sim(A)$ is a morphism 
in the augmented Cuntz semigroups which is strictly positive.
Then there exists a sequence of \hm s $\phi_n: C\to A$ such 
that $\lim_{n\to\infty}{\rm Cu}^\sim(\phi_n)=\lambda.$  This result leads to the proof 
that every separable amenable simple \CA\, in the UCT class has rationally generalized tracial rank at most one. 
\end{abstract}

\section{Introduction}

Recently there has been some rapid progress in the Elliott program of classification 
of separable amenable  \CA s. For example, all unital separable amenable simple 
Jiang-Su stable \CA s in the UCT class have been classified up to isomorphisms by 
the Elliott invariant (see \cite{GLN}, \cite{GLN2}, \cite{EGLN},  and \cite{TWW}, for example).
Let $A$ be a unital ${\cal Z}$-stable \CA,  where ${\cal Z}$ is the Jiang-Su algebra.
It was shown by M. R\o rdam (\cite{Rrzstable}) that $A$ either has stable rank one, i.e., 
the invertible elements in $A$ are dense in $A,$   or $A$ is purely infinite. 
As a consequence,  in the finite case, by \cite{Rf1} and \cite{Rf2}, $A$ has the cancellation of projections  
and $U(A)/U_0(A)=K_1(A).$  There are other regular properties for ${\cal Z}$-stable \CA s (see  also
\cite{Winter-Z-stable-02}).
It is these regular properties that make the class of unital separable amenable simple ${\cal Z}$-stable 
\CA s classifiable.

One may expect  that  non-unital simple ${\cal Z}$-stable \CA s have similar 
properties.  Indeed, by M. R\o rdam (\cite{Rrzstable}),   non-unital simple  ${\cal Z}$-stable \CA s 
also have strict comparison for positive elements and nice picture of Cuntz semigroups (see \cite{ESR-Cuntz}).
It is shown by L. Robert (\cite{Rlz}) that, if $A$ is a simple stably projectionless ${\cal Z}$-stable \CA, 
then $A$ has almost stable rank one, i.e., the invertible elements in ${\td A},$ the unitization of $A,$ are dense in 
$A.$ If $A$ is a separable simple ${\cal Z}$-stable \CA\, which 
is not stably projectionless, then $A$  must have stable rank one.
So we will mainly consider stably projectionless simple  \CA s.
There is a fundamental  difference between unital simple \CA s and stably projectionless simple \CA s.
In \cite{GLII}, we show that there is a unique separable amenable simple stably projectionless 
${\cal Z}$-stable \CA\, $\zo$ 
in the UCT class with a unique tracial state such that $K_i(\zo)=K_i(\C)$ ($i=0,1$). 
Let $A$ be any finite separable amenable simple \CA. Then
$A\otimes \zo$ is a separable 
amenable simple stably projectionless  ${\cal Z}$-stable \CA\, such that $K_i(A\otimes \zo)=K_i(A)$ ($i=0,1$) and 
$T(A\otimes \zo)=T(A).$ This means that there is a rich class of    separable simple amenable 
stably projectionless ${\cal Z}$-stable \CA s.
There are also separable amenable stably projectionless simple \CA s which cannot be 
written as $A\otimes \zo$ for any separable simple amenable \CA\, $A$ (see \cite{GLIII}).

More recently a  classification theorem for non-unital separable simple amenable ${\cal Z}$-stable \CA s  
with stable rank one in the UCT class 
was presented in the original version of \cite{GLIII}. The motivation  of this note  is to provide
a technical result that  removes the condition of stable rank one.  
We will not, however,  prove that, in general, a separable simple stably projectionless 
${\cal Z}$-stable \CA\, has  stable rank one.  Instead, we will show that 
these \CA s have nice properties which will lead to a reduction theorem, i.e., every 
separable amenable simple stably projectionless \CA\, in the UCT class has rationally  generalized tracial rank one
without assuming that $A\otimes Q$ has stable rank one.   Therefore, as in \cite{GLIII},
the additional condition of stable rank one in the classification theorem mentioned above is removed.

We begin with the question whether a non-unital separable simple ${\cal Z}$-stable \CA\, $A$ 
still has the cancellation  of projections  for $\td A$ and the property 
$U(\td A)/U_0(\td A)=K_1(A).$ 
In this note, we first show that, indeed, $U(\td A)/U_0(\td A)=K_1(A)$ (see Corollary \ref{C3}).

One notices that we study the unitary group of $M_n(\td A)$ not that of $M_n(A)$ as $M_n(A)$ has no unitaries. 
Naturally we  study the Cuntz semigroup  ${\rm Cu}(\td A)$ of $\td A,$ not ${\rm Cu}(A)$ when $A$ is stably projectionless
but $K_0(A)\not=\{0\}.$   To make the strict comparison more meaningful, we assume that $A$ has continuous scale. 
 It should be noted that $\td A$ is not ${\cal Z}$-stable and 
we do not know whether $\td A$ has stable rank one.
We do not even know whether  ${\rm Cu}(\td A)$ has cancellation of projections.
Nevertheless,  
we will show that, in the case that $A$ has continuous scale,
indeed,  
${\rm Cu}(\td A)$ has the strict comparison and 
a weak cancellation property.
These two aforementioned
properties (one for $K_1$ and one for ${\rm Cu}(\td A)$) are proved 
without assuming $A$ has stable rank one.

L. Robert shows (\cite{Rl}) that the augmented Cuntz semigroup ${\rm Cu}^\sim$ classifies \hm s 
from 1-dimensional noncommutative CW complexes with trivial $K_1$-groups to \CA s of stable rank one. 
This result played an important role in the proof  of the fact
that every unital separable  finite simple \CA\, with finite nuclear dimension  in the UCT class has rationally generalized tracial rank at most one.
Since unital separable simple amenable  ${\cal Z}$-stable  \CA s in the UCT class  with rationally generalized tracial rank at most one are previously shown 
to be classified by the Elliott invariant, this latter result leads to   the classification 
of all unital separable simple amenable \CA s of finite nuclear dimension in the UCT class (see \cite{EGLN}). 

The additional condition that \CA s have stable rank one  in the classification 
results for non-unital simple \CA s mentioned above was  used to apply 
the following existence result of L. Robert (\cite{Rl}):
Let $C$ be a 1-dimensional noncommutative CW  complex  with $K_1(C)=\{0\}$ and with a strictly positive element $e_C.$ 
If
$\lambda: {\rm Cu}^\sim(C)\to {\rm Cu}^\sim(A)$ is a morphism in ${\bf Cu}$ 
with $\lambda([e_C])\le [a]$ for some $a\in A_+,$  then there exists a \hm\, 
$\phi: C\to A$ such that ${\rm Cu}^\sim(\phi)=\lambda.$

As mentioned in the abstract, we show,  without assuming $A$ has stable rank one, 
that there is a sequence of \hm s $\phi_k: C\to A$ 
such that $\lim_{k\to\infty}{\rm Cu}^\sim(\phi_k)=\lambda,$
if, in addition,  $A$ is an exact separable simple 
${\cal Z}$-stable \CA\,  and $\lambda([c])\not=0$ for any $c\in C_+\setminus \{0\}$ 
(see Definition \ref{DappCu}). 
  It turns out that this weaker version of  existence  theorem will be  sufficient for the purpose 
of proving  that every separable simple amenable stably projectionless  \CA\, in the UCT class has 
rationally generalized tracial rank at most one.  Therefore, we are able to remove the redundant  condition of stable rank one 
in the original version of \cite{GLIII}. 
 Together with the classification theorem in \cite{GLIII}, every  finite
separable simple amenable ${\cal Z}$-stable \CA\, in the UCT class, in fact, has stable rank one. 

Let $C$ be a 1-dimensional NCCW complex. L. Robert shows 
that there are 1-dimensional NCCW complexes $C_0, C_1,...,C_n$
such that $C_0=C_0((0,1]),$ $C_n=C,$ $C_i$ is either stably isomorphic to $C_{i-1},$
or $C_i$ is the unitization of $C_{i-1},$ or $C_{i-1}$ is the unitization of $C_i,$
$i=1,2,...,n.$
Let $B$ be a  separable simple stably projectionless 
${\cal Z}$-stable \CA. Then $B$ has almost stable rank one. 
We first show that, for $C=C_0,$ a \hm\, $h$ can be produced 
so that ${\rm Cu}^\sim(h)$ will be the given $\lambda.$ We then show our approximate version 
of existence theorem holds for \CA s $C_1$ and beyond.  However, this process 
requires to change the target algebra $B$ to $M_n(\td B)$ (for any integer $n\ge 1$).
The problem is that we do not  know whether $M_n(\td B)$ has stable rank one.

Let $\phi, \psi: C\to M_n(\td B)$ be \hm s such that ${\rm Cu}^\sim(\phi)={\rm Cu}^\sim(\psi).$
Suppose that $e\in M_k(C)$  is a nonzero projection and $p=\phi(e)$ and $q=\psi(e).$ 
Note that
$[p]=[q]$ in $\in {\rm Cu}^\sim(\td B)$ if and only if 
there is an integer $1\le m$ ($\le 2$) such that
$
p\oplus 1_m\sim q\oplus 1_m
$
in the Cuntz semigroup of $\td B.$  
However, 
 the classification of \hm s by ${\rm Cu}^\sim$ is not possible without $p\sim q.$
We will not attempt to prove that the functor ${\rm Cu}^\sim$ 
(introduced by Robert) classifies \hm s from 1-dimensional noncommutative 
CW complexes.  
The existence part of Theorem 1.0.1  of \cite{Rl} depends on the uniqueness part of  that. 
  Nevertheless,  we will find a way to circumvent this to obtain an approximate version of existence  theorem
 without the uniqueness theorem.
  
  The paper is organized as follows: In section 2, we list some basics regarding the notion of almost stable rank one 
  and other notations. In section 3, we show that, with slightly more  general assumption, 
  $U(\td A)/U_0(\td A)=K_1(A)$ for any separable simple ${\cal Z}$-stable \CA\, $A.$
  In section 4, we present some crucial technical statements about comparison 
  in $M_n(\td A)$ (for any integer $n\ge 1$) 
  involving unitaries. 
  We show that ${\rm Cu}(\td A)$ has the strict comparison and a weak 
  cancellation when $A$ has continuous scale.
  In section 5, we start some discussion of 
  approximation in  augmented Cuntz semigroups and perturbation of \hm s.  In section 6, we deal with 
  unitization. Finally we present the main results in section 7.

{{{\bf{Acknowledgement}}:    This research
    is partially supported by a NSF grant (DMS 1954600)}}
and  the Research Center for Operator Algebras in East China Normal University.
  %
  %
  %
%
%

\section{Basics}

\begin{df}[\cite{Rlz}]\label{Dastr1}
If $C$ is a unital \CA, let $GL(C)$ be the group of invertible elements of $C.$
A \CA\, $A$ is said to have almost stable rank one, 
if $GL(\td B)$ is dense in $B$ for every hereditary \SCA\, $B$ of $A,$ 
where $\tilde B$ is the unitization of $B,$ if $B$ is not unital.

If $A$  has almost stable rank one, by the definition, every hereditary \SCA\, of $A$ has almost stable rank one.
\end{df}

For a separable simple \CA\, $A,$  if $A$ does not have stable rank one, but
has almost stable rank one, then $A$ must be projectionless, by the following 
observation which is known.

\begin{prop}\label{P1}
Let $A$ be a $\sigma$-unital \CA\,  which has almost stable rank one.
Then $A$ has stable rank one, if $A$ has a nonzero full  projection.
If $A$ is simple and $M_n(A)$ has almost stable rank one for each $n,$ then $A$ either has stable rank one,
or $A$ is stably projectionless. 

Moreover, if $A$ is simple and  has almost stable rank one, then ${\rm Ped}(A),$
the Pedersen ideal of $A,$ has no infinite elements
(see Definition 1.1 of \cite{LZ}), and $\td A$ is finite. 

\end{prop}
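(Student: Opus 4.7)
The plan is to prove the three assertions in sequence. The main tool is Brown's stable isomorphism theorem, which lets us transfer stable rank one from a corner to $A$; simplicity is invoked repeatedly to extract a nonzero projection whenever such a transfer is needed.

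For the first assertion, given a nonzero full projection $p\in A$, the hereditary $C^*$-subalgebra $pAp$ is unital with unit $p$, so almost stable rank one applied to $B=pAp$ gives density of $GL(\widetilde{pAp})$ in $pAp$. Since $pAp$ is unital one has a $C^*$-algebra isomorphism $\widetilde{pAp}\cong pAp\oplus\C$, under which approximating $(x,0)$ by an invertible $(y,\lambda)$ forces $\lambda\to 0$ and $y\to x$ with $y\in GL(pAp)$; thus $pAp$ has stable rank one. Since $p$ is full and $A$ is $\sigma$-unital, Brown's theorem yields a stable isomorphism $A\otimes\mathcal{K}\cong pAp\otimes\mathcal{K}$, and since stable rank one is preserved under stable isomorphism of $\sigma$-unital $C^*$-algebras (equivalently, it passes to hereditary subalgebras, applied to $A\hookrightarrow A\otimes\mathcal{K}$), $A$ inherits stable rank one. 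The second assertion is then immediate: if $A$ is not stably projectionless, some $M_n(A)$ contains a nonzero projection, which is full by simplicity of $M_n(A)$; the first assertion applied to $M_n(A)$ yields stable rank one of $M_n(A)$, and $A$ is a corner (hence hereditary subalgebra) of $M_n(A)$.

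For the finiteness of $\td A$, suppose $s\in\td A$ is an isometry. Writing $s=\lambda 1 + a$ with $a\in A$, the relation $s^*s=1$ forces $|\lambda|=1$, so after multiplying by $\bar\lambda$ we may assume $s=1+a$. A direct computation using $s^*s=1$ gives $1-ss^*=a^*a-aa^*\in A$, a projection. If this projection is nonzero, simplicity makes it full, whence by the first assertion $A$ has stable rank one and so does $\td A$. Then $s$ is approximated by invertibles $s_n\in\td A$; polar decomposition in the unital algebra $\td A$ writes $s_n=u_n|s_n|$ with $u_n$ unitary, so $s_n s_n^*$ and $s_n^*s_n$ are unitarily equivalent and share a spectrum. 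But $s_n^*s_n\to s^*s=1$ while $s_n s_n^*\to ss^*$, whose spectrum is $\{0,1\}$ since $ss^*$ is a projection distinct from $1$, contradicting spectral equality in the limit. Hence $1-ss^*=0$ and $s$ is unitary. For the absence of infinite elements in ${\rm Ped}(A)$, I argue in cases. If $A$ has a nonzero projection, the second assertion gives $A$ stable rank one, and standard arguments (using that stable rank one passes to matrix amplifications and hereditary subalgebras, together with finiteness of unital stable rank one algebras) preclude infinite elements in ${\rm Ped}(A)$. If $A$ is stably projectionless, an infinite $a\in{\rm Ped}(A)_+$ produces, via the defining decomposition from \cite{LZ}, a non-unitary isometry in the unitization of $B=\overline{aAa}$; but $B$ is simple (as a nonzero hereditary subalgebra of the simple $A$), $\sigma$-unital, and inherits almost stable rank one, so the preceding paragraph applied to $B$ shows $\td B$ is finite, contradicting the existence of a non-unitary isometry.

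The main obstacle is that almost stable rank one, by definition, only provides approximation by invertibles for elements already in $A$, not for general elements of $\td A\setminus A$; the key device throughout is to exploit simplicity to manufacture a nonzero projection from any would-be infinite structure (the defect $1-ss^*$ of an isometry, the duplication inside an infinite Pedersen element), and then invoke the first assertion to upgrade almost stable rank one to honest stable rank one, where the classical spectral and polar-decomposition arguments apply.
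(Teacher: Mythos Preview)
Your proof is essentially correct and follows the same strategy as the paper: use Brown's theorem to pass stable rank one from a full corner to $A$, and for the ``Moreover'' part, exploit that any failure of finiteness manufactures a projection, which then triggers the first assertion.

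A few differences and minor issues are worth noting. For the first assertion, the paper reads the definition of almost stable rank one so that for a unital hereditary $B=pAp$ one has $\tilde B=B$, hence $GL(pAp)$ dense in $pAp$ is immediate; your detour through $pAp\oplus\C$ is correct but unnecessary under the paper's convention. For finiteness of $\tilde A$, the paper simply cites that stable rank one of $\tilde A$ implies finiteness; your spectral argument via polar decomposition is a valid expansion of that fact but not needed. The real divergence is in the ${\rm Ped}(A)$ argument: the paper invokes Theorem~1.2 of \cite{LZ} directly (an infinite element in ${\rm Ped}(A)$ forces a nonzero projection in $A$), then the first assertion yields stable rank one, contradicting the case hypothesis. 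Your route instead asserts that an infinite element produces a non-unitary isometry in $\widetilde{\overline{aAa}}$ ``via the defining decomposition from \cite{LZ}''; this is morally the content of the \emph{proof} of Theorem~1.2 of \cite{LZ} rather than an immediate consequence of Definition~1.1, so it deserves either a citation or a line of argument. Also, your dichotomy ``$A$ has a projection'' versus ``$A$ is stably projectionless'' is not exhaustive under the Moreover hypothesis (which does not assume $M_n(A)$ has almost stable rank one for all $n$); fortunately your Case~2 argument, once justified, works for any simple $A$ with almost stable rank one and does not actually use stable projectionlessness, so Case~1 is redundant. Finally, where you write ``the second assertion gives $A$ stable rank one'' you mean the first.
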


\begin{proof}
Fix an integer $n\ge 1.$ Suppose that $M_n(A)$ has almost stable rank one.
Let $p\in M_n(A)$ be a nonzero full projection.
By the definition, the invertible elements of $pM_n(A)p$ is dense in $pM_n(A)p.$
So $pM_n(A)p$ has stable rank one. By \cite{Br}, since $A$ is $\sigma$-unital, $pM_n(A)p$
is stably isomorphic to $A.$ Therefore $A$ has stable rank one. Note that the above works for $n=1.$
This proves the first part of the statement.

Suppose that  $A$ is simple and has almost stable rank one. If $A$ has stable rank one, $\td A$ has stable rank one.
Then $A$ and $\td A$ 
are stably finite.   In particular, ${\rm Ped}(A)$ has no infinite elements. 
Now suppose that $A$ does not have stable rank one but  has almost stable rank one.
If ${\rm Ped}(A)$ has an infinite element, by Theorem 1.2 of \cite{LZ},
$A$ has a non-trivial projection. By the first part of the proposition, $A$ has stable rank one.
A contradiction. 

  If $\td A$ is not finite,  there is $v\in \td A$ 
such that $vv^*=1$ and $v^*v\not=1.$ Then $1-v^*v\in A$ is a non-zero projection.  By what has been 
proved, this would imply that $A$ has stable rank one. 
%
%
\end{proof}

The following  is a non-unital version of a result of R\o rdam (\cite{Rrzstable})
 which follows  from   Theorem 6.7 of \cite{Rrzstable} and   a result of L. Robert (Theorem 1.2 \cite{Rlz}).

\begin{thm}\label{astrk1z}
Let $A$ be a $\sigma$-unital simple  ${\cal Z}$-stable \CA.  Then one and only one of the following must occur:

(1) $A$ is purely infinite,

(2) $A$ has stable rank one,

(3) $A$ does not have stable rank one, but has almost stable rank one and is stably projectionless.
Moreover $A$ has a non-zero {\rm 2}-quasitrace. 
\end{thm}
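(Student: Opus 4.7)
The plan is to reduce the classification to R\o rdam's unital dichotomy (Theorem 6.7 of \cite{Rrzstable}) via a standard full-corner argument, and to invoke Robert's Theorem 1.2 of \cite{Rlz} to deliver almost stable rank one in the remaining stably projectionless case. Mutual exclusiveness of (1)--(3) is immediate: purely infinite simple \CA s contain infinite projections and therefore fail stable rank one, while (2) and (3) are disjoint by how (3) is phrased.

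For exhaustiveness I would split on whether $A$ is stably projectionless. Suppose first that it is not; then pick an integer $n\ge 1$ and a nonzero projection $p\in M_n(A)$. By simplicity $p$ is full, and since ${\cal Z}$-stability passes to full hereditary \SCA s of $\sigma$-unital \CA s, the unital corner $B:=pM_n(A)p$ is a unital simple ${\cal Z}$-stable \CA. R\o rdam's Theorem 6.7 applied to $B$ yields two cases. If $B$ is purely infinite, then pure infiniteness transfers to $M_n(A)$ via fullness of $p$, and hence to $A$, placing it in (1). If $B$ has stable rank one, then Brown's theorem provides a stable isomorphism $A\otimes {\cal K}\cong B\otimes {\cal K}$; since $B\otimes {\cal K}$ has stable rank one (stability of $\mathrm{tsr}=1$ under stabilization for simple algebras) and $A$ sits as a full hereditary \SCA\, of $A\otimes {\cal K}$, $A$ inherits stable rank one and lies in (2).

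Now suppose $A$ is stably projectionless. Robert's Theorem 1.2 of \cite{Rlz} delivers almost stable rank one for $A$ directly. If it happens that $A$ already has stable rank one, we are again in (2); otherwise all three clauses of (3) are satisfied. For the ``moreover'' statement, a stably projectionless \CA\, contains no infinite projection and so cannot be purely infinite, whence R\o rdam's dichotomy for simple ${\cal Z}$-stable \CA s forces $A$ to be stably finite; the Blackadar--Handelman construction, in the densely defined lower semicontinuous form available for $\sigma$-unital simple stably finite \CA s, then produces a nonzero $2$-quasitrace on $A$.

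The principal obstacle I anticipate is the descent of stable rank one from the unital corner $B$ back to the non-unital $A$: the unital version of the result cannot be cited directly, and one must route through $A\otimes {\cal K}$ via Brown's theorem and the simplicity-plus-$\sigma$-unital form of Rieffel's hereditary-subalgebra result for $\mathrm{tsr}=1$. The remaining inputs (R\o rdam's dichotomy, Robert's theorem, and quasitrace existence for stably finite simple $\sigma$-unital \CA s) are used as black boxes.
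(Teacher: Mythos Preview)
Your proposal is correct and follows essentially the same route as the paper: pass to a unital full corner $B=pM_n(A)p$ when a projection exists and invoke R\o rdam's unital results there, then apply Robert's theorem \cite{Rlz} in the stably projectionless case. The paper organizes the argument contrapositively (assume neither (1) nor (2) and derive (3)) and, rather than citing the dichotomy as a single black box, unpacks it as Theorem 6.7 of \cite{Rrzstable} (unital simple finite ${\cal Z}$-stable $\Rightarrow$ stable rank one, used in contrapositive) together with Corollary 5.1 of \cite{Rrzstable}; it also obtains the $2$-quasitrace directly from Corollary 5.1 of \cite{Rrzstable} (and Proposition 4.9 of \cite{FL}) rather than via Blackadar--Handelman.
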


\begin{proof}
Suppose that  neither  does $A$ have stable rank one,  nor $A$ is purely infinite.

Note, since $A$ is ${\cal Z}$-stable, so is $M_n(A)$ for each $n\in \N.$ 
If $M_n(A)$ contains a non-zero projection $p$ for some $n\in \N,$ then $B:=pM_n(A)p,$ as a unital hereditary \SCA,
is also ${\cal Z}$-stable (Corollary 3.1 of  \cite{TW}).  By  Theorem 4.5 of  \cite{Rrzstable},  $W(B)$ is almost unperforated.  If $B$ does not have stable rank one, then, by Corollary 3.6 of \cite{BP95}, $M_k(B)$ does not have stable rank one for any $k.$ 
Therefore, by Theorem 6.7 of \cite{Rrzstable}, none of $M_k(B)$ are finite.  Then, by Corollary 5.1 of \cite{Rrzstable}
(see also Proposition 4.9 of \cite{FL}),
  $B$ is purely infinite.    By the assumption at the very beginning,
$M_n(A)$ has no non-zero projection for all $n.$
 In other words, $A$ is stably projectionless.
Then, by \cite{Rlz}, $A$ has almost stable rank one. Moreover, by Corollary 5.1 of \cite{Rrzstable} (see Proposition 4.9 of \cite{FL}), $A$ has a non-zero
2-quasitrace.
\end{proof}

We do not know, at the moment,  that  case (3) of  Theorem \ref{astrk1z} can actually occur.

\begin{prop}\label{Pmatrixsr1}
Let $A$ be a \CA\, which has almost stable rank one.
Then, for any integer $n\ge 1,$ $GL(M_n(\td A))$ is dense in $M_n(A).$
Moreover, $GL((\td A\otimes {\cal K})^\sim)$ is dense in $A\otimes {\cal K}.$
\end{prop}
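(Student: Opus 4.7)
The plan is to prove the first statement by induction on $n$; the second will then follow quickly. The base case $n=1$ is exactly the definition of almost stable rank one applied to the hereditary \SCA\ $B=A$. For the inductive step, given $a\in M_n(A)$ and $\ep>0$, I would partition $a$ into blocks
$$
a=\begin{pmatrix} a_1 & b \\ c & d \end{pmatrix},\qquad a_1\in A,\ b\in M_{1,n-1}(A),\ c\in M_{n-1,1}(A),\ d\in M_{n-1}(A),
$$
and use the inductive hypothesis to choose an invertible $d'\in M_{n-1}(\td A)$ with $\|d-d'\|<\ep/2$. Then I would form the Schur-complement-type element
$$
s\ :=\ a_1-b(d')^{-1}c,
$$
which lies in $A$, crucially because $A$ is an ideal of $\td A$, so $b(d')^{-1}c\in A$ even though $(d')^{-1}$ is only in $M_{n-1}(\td A)$. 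The base case then produces an invertible $s'\in\td A$ with $\|s-s'\|<\ep/2$.

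The candidate approximation would be
$$
\td a'\ :=\ \begin{pmatrix} s'+b(d')^{-1}c & b \\ c & d' \end{pmatrix}\ \in\ M_n(\td A),
$$
which admits the block LDU factorization
$$
\td a'\ =\ \begin{pmatrix} 1 & b(d')^{-1} \\ 0 & 1 \end{pmatrix}\begin{pmatrix} s' & 0 \\ 0 & d' \end{pmatrix}\begin{pmatrix} 1 & 0 \\ (d')^{-1}c & 1 \end{pmatrix},
$$
a product of three invertibles in $M_n(\td A)$, hence invertible there. A direct computation gives $\td a'-a=\diag(s'-s,\,d'-d)$, so $\|\td a'-a\|<\ep$, completing the induction.

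For the second statement, given $a\in A\otimes{\cal K}$ and $\ep>0$, I would truncate by the standard rank-$N$ projection $p_N\in{\cal K}$ to obtain $a':=(1\otimes p_N)a(1\otimes p_N)\in M_N(A)$ with $\|a-a'\|<\ep/3$ for $N$ large. The first statement produces an invertible $b'\in M_N(\td A)$ with $\|a'-b'\|<\ep/3$. Since $M_N(\td A)=p_N(\td A\otimes{\cal K})^\sim p_N$, the element
$$
b\ :=\ b'+\lambda(1-p_N),\qquad 0<|\lambda|<\ep/3,
$$
is block diagonal with respect to $1=p_N+(1-p_N)$, with both blocks invertible in their respective corners ($b'$ in $M_N(\td A)$, and $\lambda(1-p_N)$ as a nonzero multiple of the unit of its corner). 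Hence $b$ is invertible in $(\td A\otimes{\cal K})^\sim$, and $\|b-a\|<\ep$.

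The main subtlety is in the inductive step: a naive block Gaussian elimination to bring $a$ itself into block-diagonal form would require $(d')^{-1}$ to have controlled norm, which is generally unavailable since $d$ can be arbitrarily close to non-invertible. The trick above avoids this by defining $\td a'$ directly from the approximate diagonal $\diag(s',d')$ through the LDU formula; the extra term $b(d')^{-1}c$ appearing in the $(1,1)$ entry of $\td a'$ precisely cancels the Schur complement in the definition of $s$, so $\td a'$ ends up close to $a$ regardless of the size of $\|(d')^{-1}\|$.
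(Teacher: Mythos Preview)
Your proof is correct and follows essentially the same approach as the paper's: both argue by induction on $n$, perturb the $(n-1)\times(n-1)$ block to an invertible, form the Schur complement (which lies in $A$ since $A$ is an ideal in $\td A$), perturb that to an invertible in $\td A$, and conclude invertibility of the resulting block matrix; for the stable statement, both truncate to $M_N(A)$ and add a small scalar multiple of the complementary projection. The only cosmetic difference is that the paper cites a lemma (Lemma 3.1.5 of \cite{Lnbook}) for the invertibility of the block matrix, whereas you write out the LDU factorization explicitly --- which is in fact the content of that lemma.
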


\begin{proof}
We prove  the first part  by induction. Suppose that $GL(M_n(\td A))$ is dense in 
$M_n(A).$   We will show that $GL(M_{n+1}(\td A))$ is dense 
in $M_{n+1}(A).$

Let $x\in M_{n+1}(A).$  Put $p:=\diag(1_{\td A},\overbrace{0,0,...,0}^{{n}}).$ 
Let $a=pxp,$ $b=(1-p)x(1-p),$ $c=px(1-p)$ and $d=(1-p)xp.$
Hence we may write  
\beq
x=\begin{pmatrix} a & c\\ d & b\end{pmatrix}.
\eneq
Let $\ep>0.$ By the inductive assumption, there is $b'\in GL(M_n(\td A))$
such that $\|b-b'\|<\ep.$  Note 
\beq
c(b')^{-1}d=px(1-p) (b')^{-1} (1-p)xp\in p(M_n(A))p\, (=A).
\eneq
Therefore (since $A$ has almost stable rank one) there is 
$z\in GL(\td A)$ such that $\|z-(a-c(b')^{-1}d)\|<\ep.$
Set $a'=z+c(b')^{-1} d\in \td A.$  Then
\beq
\|a-a'\|=\|a-c(b')^{-1}d-z\|<\ep.
\eneq
Moreover,
$a'-c(b')^{-1}d=z.$
Put 
$$
y=a' +b'+c+d=\begin{pmatrix} a' & c\\ d & b'\end{pmatrix}.
$$
Then $y\in M_{n+1}(\td A)$ and $\|x-y\|<\ep.$
It follows from Lemma 3.1.5  of \cite{Lnbook} (see the proof of Lemma 3.4 of \cite{Rf1}) that 
$y$ is invertible. 

For the last part, let $a\in A\otimes {\cal K}$ and $1>\ep>0.$ 
Viewing $M_n(A)$ as a  \SCA\, of $A\otimes {\cal K},$  we may assume that $a\in M_n(A)$ 
for some large $n.$ By what has been proved, we have an invertible element
$b\in M_n(\td A)$ such that $\|b-a\|<\ep.$ Write $b=(c_{i,j})_{n\times n}$ 
with $c_{i,j}=\af_{i,j}+a_{i,j},$ where $\af_{i,j}\in \C$ and $a_{i,j}\in A.$ Let $E_n$ be 
the identity of $M_n(\td A).$ Put $x:=b+\ep\cdot (1_{(A\otimes {\cal K})^\sim}-E_n).$ 
Then $x\in GL((\td A\otimes {\cal K})^\sim)$ and $\|a-x\|<\ep.$
\end{proof}

\begin{df}\label{Dher}
Let $A$ be a \CA.  
Denote by $A^{\bf 1}$   the unit ball of $A.$
Let  $a\in A_+.$  Denote by  ${\rm Her}(a)$ the hereditary \SCA\, $\overline{aAa}.$
If $a, b\in A_+,$ we write $a\lesssim b,$
($a$ is Cuntz smaller than $b$),
if there exists a sequence of $x_n\in A$ such that $a=\lim_{n\to\infty} x_n^*x_n$
and $x_nx_n^*\in {\rm Her}(b).$  {{If  $a\lesssim b$ and $b\lesssim a$, then we say $a$ is Cuntz equivalent to $b$
and write $a\sim b.$}} The Cuntz equivalence class represented by $a$ will
be denoted by $[a].$  So we write $[a]\le [b],$ if $a\lesssim b.$ Also $[a]\ll [b]$ means 
that, if for any increasing sequence $\{x_n\}$  such that  $[b]\le \sup_n x_n,$ then $[a]\le x_n$ for some $n.$
It is well known that, for any $0<\ep<\|a\|,$ $[(a-\ep)_+]\ll [a]$ (see the middle of the proof of Lemma 2.1.1 of \cite{Rl} and 
Theorem 1 of \cite{CEI}).
Denote by  ${\rm Cu}(A)$  the Cuntz semigroup of $A$ (equivalence classes in $A\otimes {\cal K}$).
An element $x\in {\rm Cu}(A)$ is {\it compact}, if $x\ll x.$
In what follows, we will also use the augmented semigroup ${\rm Cu}^\sim(A)$
introduced in \cite{Rl} and the revised version in \cite{RS}. We refer the reader to
\cite{Rl} and \cite{RS} for details of the definition of ${\rm Cu}^\sim$ and the related terminologies. 

\end{df}

\begin{df}\label{Dqtr}
Let $A$ be a \CA. Denote by $QT(A)$ the set of 2-quasitraces of $A$ with norm 1, 
and by $T(A)$ the tracial state space of $A.$  Both could be empty  in general.

For any (non-unital) separable  \CA\, $A,$ denote by ${\rm Ped}(A)$ the Pedersen ideal of $A.$  
Suppose that $B$ is a full hereditary \SCA\, of $A$ such that $B\subset {\rm Ped}(A).$ 
If $\tau\in QT(B),$
we will continue to write $\tau$ for $\tau\otimes {\rm Tr},$ where ${\rm Tr}$ is the densely defined 
trace on ${\cal K}.$ We write $QT_0(B)$ for the set of all 2-quasitraces of $B$
with the norm at most one.
Since $A$ is stably isomorphic to $B,$ $\tau\in QT_0(B)$ gives 
a densely defined 2-quasitrace of $A.$ Denote by ${\widetilde{QT}}(A)$ the set of all densely defined 2-quasitraces on $A$
with the topology given in \cite{ESR-Cuntz} (see the paragraph above Theorem 4.4 of \cite{ESR-Cuntz}).
In most cases, we will consider only those \CA s with the property that every 
2-quasitrace is a trace, for example, $A$ is exact. 

If $\tau\in {\widetilde{QT}}(A),$ we will also continue to write $\tau$  on 
$A\otimes {\cal K}$ for $\tau\otimes {\rm Tr},$ where ${\rm Tr}$ is the standard (densely defined)
trace on ${\cal K}.$  So we will view  ${\widetilde{QT}}(A)$ the set of   densely defined 
2-quasitraces on $A\otimes {\cal K}.$

\end{df}

\begin{df}\label{Daff}
Let  $S$ be a convex subset of a  convex  topological cone (which has zero)
(such as $\widetilde{QT}(A)$).
Let $\Aff(S)$ be the set of all 
real valued continuous affine functions on $S$ with the property that, 
if $0\in S,$ then $f(0)=0.$
Define (see \cite{Rl}) 
\beq
\Aff_+(S)&=&\{f: \Aff(S):  \, 
f(\tau)>0\,\,{\rm for}\,\,\tau\not=0\}\cup \{0\},\\
{\rm LAff}_+(S)&=&\{f:S\to [0,\infty]: \exists \{f_n\}, f_n\nearrow f,\,\,
 f_n\in \Aff_+(S)\}\andeqn\\
 {\rm LAff}^{\sim}_+(S) &=&\{f_1-f_2: f_1\in {\rm LAff}_+(S)\andeqn f_2\in
 {\rm Aff}_+(S)\}.
 \eneq
 Note that $0\in \LAff_+(S).$ For the most part of this paper, 
 $S=T(A),$ or $S={\widetilde{QT}}(A)$
in the above definition will be used.  
In particular, if $S={\widetilde{QT}}(A)$ and $f\in \LAff_+(S),$ then $f(0)=0.$



\end{df}

\begin{df}\label{Dfep}
For any $\ep>0,$ define $f_\ep\in C([0,\infty))_+$ by
$f_\ep(t)=0$ if $t\in [0, \ep/2],$ $f_\ep(t)=1$ if $t\in [\ep, \infty)$ and
$f_\ep(t)$ is linear in $(\ep/2, \ep).$

Let $A$ be a \CA\, and $\tau$ be in ${\widetilde{QT}(A)}.$
For each $a\in A_+$ define $d_\tau(a)=\lim_{\ep\to 0} \tau(f_\ep(a)).$
Note that $f_\ep(a)\in {\rm Ped}(A)$ for all $a\in A_+.$
Recall that $A$ is said to have the Blackadar strict comparison for positive elements, 
if  $a, \, b\in (A\otimes {\cal K})_+,$ then $a\lesssim b$ whenever $d_\tau(a)<d_\tau(b)$ for all 
non-zero
$\tau\in {\widetilde{QT}}(A).$ 

Let $A$ be a separable  stably finite simple \CA.
There is an 
order preserving \hm\, 
$\iota: {\rm Cu}(A)\to \LAff_+(\widetilde{QT}(A))$ defined by
$\iota([a])=d_\tau(a)$ for all $\tau\in  \widetilde{QT}(A)$ and for all $a\in (A\otimes {\cal K})_+.$
Let ${\rm Cu}_+(A)$ be the sub-semigroup 
of those elements in ${\rm Cu}(A)$ which cannot be represented by non-zero projections (see Proposition 6.4
of \cite{ESR-Cuntz}) and let 
$V(A)$ be the Murray-von Neumann equivalence classes of projections 
in $A\otimes {\cal K}.$   
If $A$ has the strict comparison, $\iota|_{{\rm Cu}_+(A)}$ is surjective  and is an order isomorphism,
following Corollary 6.8 of \cite{ESR-Cuntz},
we write ${\rm Cu}(A)=(V(A)\setminus \{0\})\sqcup \LAff_+(\widetilde{QT}(A)),$ 
where  
the mixed addition and the order   are defined 
in  the paragraph 
above Corollary 6.8 of \cite{ESR-Cuntz}
(see also page 10 of \cite{TT}).
In particular,  if $x\in V(A)\setminus \{0\}$ and $y\in  \LAff_+(\widetilde{QT}(A)),$ 
then $x+y=x$ if $y=0,$ and 
$x+y=\iota(x)+y,$ if $y\not=0,$ and,
$x\le y,$ if $\iota(x)(t)< y(t)$ for all $t\not=0,$   and $y\le x,$ if $y\le \iota(x).$

\end{df}

\begin{df}\label{Dregular}
A  separable simple \CA\, $A$ is said to  be {\it regular},  if $A$ is purely infinite, or
if $A$ has almost stable rank one and ${\rm Cu}(A)=(V(A)\setminus \{0\})\sqcup \LAff_+({\widetilde{QT}}(A))$ 
(see \ref{Dfep} above).  By \cite{Br},  for any non-zero hereditary \SCA\, $B$ of $A,$
$B\otimes {\cal K}\cong A\otimes {\cal K}.$ Therefore ${\rm Cu}(B)={\rm Cu}(A).$ 
 Hence, if $A$ is a separable regular simple \CA,  then every non-zero
hereditary \SCA\, of $A$ is regular (see the last paragraph of \ref{Dqtr}). 
Except 
\ref{C2} and \ref{C3},  we only consider the case that $A$ is finite.
By \cite{Rlz} (also Theorem \ref{astrk1z} above)  and Theorem 6.6 of \cite{ESR-Cuntz},  if $A$ is a  separable simple ${\cal Z}$-stable \CA, then $A$ is regular.  Recall that a separable simple \CA\, is said to be pure  (introduced by Winter in \cite{Winter-Z-stable-02}
with  non-unital version in subsection 6.3 of \cite{RS}) if ${\rm Cu}(A)$ is almost unperforated and 
 almost divisible. 
 While it is not used in this paper, we would like to mention that 
 a finite regular simple \CA\, is pure, and,
a separable simple \CA\,  which has almost stable rank one is regular if and only if it is pure
as shown in subsection 6.3  of \cite{RS} (see also Theorem 6.2 of \cite{TT} and Corollary 5.8 of \cite{ESR-Cuntz}, and 
I.1.4 of \cite{Alf}). 
We use the term ``regular" only for the convenience here. 
\end{df}

We would like to state the following version of a result of R\o rdam. Note 
that we do not assume that $M_n(A)$ has almost stable rank one.

\begin{lem}\label{LRordam1}
Let $A$ be a \CA\, which has almost stable rank one,
$a$ and $b\in M_n(A)_+$  for some integer $n\ge 1$
(or in $(A\otimes {\cal K})_+$).  Suppose 
that $a\lesssim b,$ then, for any $\ep>0,$ there is a unitary $U\in M_n(\td A)$ 
(or $U\in (\td A\otimes {\cal K}\widetilde)$)
such that 
\beq
U^*f_\ep(a)U\in {\rm Her}(b).
\eneq
\end{lem}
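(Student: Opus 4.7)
The strategy is to imitate R\o rdam's proof for the stable-rank-one case, replacing the unavailable polar decomposition of the relevant element of $M_n(A)$ by the polar decomposition of a nearby invertible furnished by Proposition \ref{Pmatrixsr1}, and then to absorb the resulting perturbation error into a slightly enlarged spectral cut-off of $a$.

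First I would reduce the statement. Since $f_\ep$ vanishes on $[0,\ep/2]$, a routine functional-calculus estimate gives $f_\ep(a)\le \lambda (a-3\ep/16)_+$ for some $\lambda>0$, hence $f_\ep(a)\in {\rm Her}((a-3\ep/16)_+)$. It therefore suffices to produce a unitary $U\in M_n(\td A)$ with $U^*(a-3\ep/16)_+U\in {\rm Her}(b)$, since then
\[
U^*f_\ep(a)U\in U^*{\rm Her}((a-3\ep/16)_+)U={\rm Her}(U^*(a-3\ep/16)_+U)\subset {\rm Her}(b).
\]

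Next, using $a\lesssim b$ and the standard R\o rdam characterisation of Cuntz subequivalence, I would choose $s\in M_n(A)$ with $s^*bs=(a-\ep/16)_+$, and set $r:=b^{1/2}s\in M_n(A)$, so that $r^*r=(a-\ep/16)_+$ and $rr^*=b^{1/2}ss^*b^{1/2}\in {\rm Her}(b)$. Proposition \ref{Pmatrixsr1} gives an invertible $w\in M_n(\td A)$ with $\|w-r\|$ as small as desired; polar-decomposing the invertible element $w=V|w|$ yields a unitary $V\in M_n(\td A)$ and a positive invertible $|w|$.

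The engine of the argument is the identity $V|w|^2V^*=ww^*$ together with the estimate
\[
\|V(a-\ep/16)_+V^*-rr^*\|\le \|w^*w-r^*r\|+\|ww^*-rr^*\|,
\]
whose right-hand side is $O(\|w-r\|)$. Arranging $\|w-r\|$ so that the left-hand side is strictly less than $\ep/8$, one has $V(a-\ep/16)_+V^*\le rr^*+(\ep/8)\cdot 1$ as operators in $M_n(\td A)$, and therefore
\[
V(a-3\ep/16)_+V^*=\bigl(V(a-\ep/16)_+V^*-\ep/8\bigr)_+\le rr^*.
\]
The standard fact that $0\le x\le y$ implies $x\in {\rm Her}(y)$ now yields $V(a-3\ep/16)_+V^*\in {\rm Her}(rr^*)\subset {\rm Her}(b)$. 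Taking $U:=V^*$ finishes the $M_n(\td A)$ case; the $(\td A\otimes{\cal K})^\sim$ version is the same argument applied with the second half of Proposition \ref{Pmatrixsr1} in place of the first.

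The main obstacle is the absence of genuine stable rank one: this blocks an exact polar decomposition of $r$ and forces one to work with a nearby invertible $w$, producing an unavoidable additive perturbation error. The key insight is that this error lives inside norm $\ep/8$ of $rr^*$ and can therefore be \emph{exactly} swallowed by passing from the spectral cut-off $(a-\ep/16)_+$ to the slightly smaller $(a-3\ep/16)_+$ via the positive-part operation, yielding an honest operator inequality below $rr^*$ rather than merely a norm approximation.
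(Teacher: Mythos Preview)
There is a genuine gap at the step where you pass from the operator inequality $V(a-\ep/16)_+V^*\le rr^*+(\ep/8)\cdot 1$ to $\bigl(V(a-\ep/16)_+V^*-\ep/8\bigr)_+\le rr^*$. This implication fails in general: the function $t\mapsto (t-\delta)_+$ is not operator monotone in the required sense, so $x\le y+\delta\cdot 1$ with $x,y\ge 0$ does \emph{not} force $(x-\delta)_+\le y$ when $x$ and $y$ fail to commute. For a concrete $2\times 2$ counterexample take
\[
x=\begin{pmatrix}4&0\\0&0\end{pmatrix},\qquad y=\begin{pmatrix}2&2\\2&2\end{pmatrix},\qquad \delta=3:
\]
one checks $(y+3\cdot 1)-x=\begin{pmatrix}1&2\\2&5\end{pmatrix}\ge 0$, yet $(x-3)_+=\begin{pmatrix}1&0\\0&0\end{pmatrix}$ and $y-(x-3)_+=\begin{pmatrix}1&2\\2&2\end{pmatrix}$ has determinant $-2$. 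Moreover $(x-3)_+\notin{\rm Her}(y)$ (the latter being one-dimensional), so even the weaker conclusion you actually need fails here. Taking the polar unitary $V$ of a nearby invertible $w$ only gives a unitary that \emph{approximately} carries ${\rm Her}(r^*r)$ into ${\rm Her}(rr^*)$; the residual error cannot be absorbed by an inequality of this kind, and what you would get from $\|V(a-\ep/16)_+V^*-rr^*\|<\ep/8$ via R\o rdam's Proposition~2.2 is only Cuntz subequivalence $(V(a-\ep/16)_+V^*-\ep/8)_+\lesssim rr^*$, not hereditary containment.

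The paper's route, following the proof of (iv)$\Rightarrow$(v) in Proposition~2.4 of \cite{Rr2}, avoids this entirely by invoking Pedersen's unitary polar decomposition (Theorem~5 of \cite{Pedjot87}) on top of Proposition~\ref{Pmatrixsr1}. Once $r\in M_n(A)$ with $r^*r=(a-\ep')_+$ and $rr^*\in{\rm Her}(b)$ is known to lie in the closure of $GL(M_n(\td A))$, Pedersen's theorem yields a unitary $U\in M_n(\td A)$ that \emph{exactly} agrees with the partial isometry $v$ from $r=v|r|$ on a spectral cut-off: $Uf_\sigma(|r|)=vf_\sigma(|r|)$. Choosing $\ep'<\ep/2$ and $\sigma$ small enough makes $f_\sigma(|r|)f_\ep(a)=f_\ep(a)$, whence $Uf_\ep(a)U^*=vf_\ep(a)v^*\in{\rm Her}(rr^*)\subset{\rm Her}(b)$ with no perturbation error to absorb.
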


\begin{proof}
By Proposition \ref{Pmatrixsr1}, $GL(M_n(\td A))$ is dense in $M_n(A)$
(or $GL((\td A\otimes {\cal K})\widetilde)$ is dense in $A\otimes {\cal K}$).
Then the proof of  (iv) $\Rightarrow$ (v) in Proposition 2.4 of \cite{Rr2} (applying Theorem 5 of \cite{Pedjot87})
works here.
\end{proof}

The following is taken from the proof of 1.5 of \cite{LinHilbert}.
But it is also known (see \cite{Rlz}). 
\begin{lem}\label{LuniH}
Let $A$ be a \CA\,
which has almost stable rank one.
Suppose that $a\in (A\otimes {\cal K})_+$ (or $a\in  A_+$),  $b\in A_+,$ and $a\lesssim b$ in ${\rm Cu}(A).$ 
Suppose that $1/4>\ep>0$ and $f_{\ep/4}(a)\in {\rm Her}(b).$
Then, for any $0<\eta<\ep/4,$ there is a unitary $u\in  (\td A\otimes {\cal K}\widetilde)$ (or $u\in \td A$) such 
that $uf_{\eta}(a)u^*\in {\rm Her}(b)$ and $uf_\ep(a)=f_\ep(a).$ 
Moreover, 
 there is a partial isometry $v\in  (A\otimes {\cal K})^{**}$ (or $v\in A^{**}$) such that $vc, cv^*\in A\otimes {\cal K}$
 (or in $A$) for all $c\in {\rm Her}(a),$
$vav^*\in {\rm Her}(b)$ and $vf_{\ep}(a)=f_{\ep}(a).$  

Furthermore,  without assuming $f_{\ep/4}(a)\in {\rm Her}(b),$ 
there is also a partial isometry $v\in  (A\otimes {\cal K})^{**}$ (or $v\in A^{**}$) such that
$vc, cv^*\in A\otimes {\cal K}$ (or in $A$),  $v^*vc=c=cv^*v$ and $vcv^*\in {\rm Her}(b)$ for all $c\in {\rm Her}(a).$ 
\end{lem}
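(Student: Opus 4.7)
The statement decomposes into three assertions of increasing difficulty: (A) the unconditional partial isometry coming from $a\lesssim b$ (the ``Furthermore'' clause); (B) the partial isometry $v$ satisfying $vf_\varepsilon(a)=f_\varepsilon(a)$ under the hypothesis $f_{\varepsilon/4}(a)\in {\rm Her}(b)$; and (C) the honest unitary $u\in\widetilde{A\otimes{\cal K}}$ under the same hypothesis. I would address them in this order.

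For (A), I would use the classical polar-decomposition construction. From $a\lesssim b$ choose $x_n\in A\otimes{\cal K}$ with $x_n^*x_n\to a$ and $x_nx_n^*\in {\rm Her}(b)$; take polar decompositions $x_n=w_n|x_n|$ in $(A\otimes{\cal K})^{**}$. A continuous-functional-calculus argument shows that for each $c\in {\rm Her}(a)$ the elements $w_nc$ are norm-Cauchy in $A\otimes{\cal K}$ (write $c=a^{1/m}(a^{-1/m}c)$ for a small $m$, approximate $a^{1/m}$ by polynomials in $|x_n|$, and use $|x_n|\to a^{1/2}$ in norm). The limit $v\in(A\otimes{\cal K})^{**}$ is a partial isometry with $v^*v=s(a)$ satisfying $vc,cv^*\in A\otimes{\cal K}$ and $v^*vc=c=cv^*v$ for all $c\in {\rm Her}(a)$, together with $vcv^*\in {\rm Her}(b)$ as a norm limit of $w_ncw_n^*\in {\rm Her}(x_nx_n^*)\subseteq {\rm Her}(b)$.

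For (B), the hypothesis together with $f_\varepsilon\cdot f_{\varepsilon/4}=f_\varepsilon$ forces $f_\varepsilon(a)\in {\rm Her}(b)$. Starting from the partial isometry $v_0$ of stage (A), the positive elements $f_\varepsilon(a)$ and $v_0f_\varepsilon(a)v_0^*$ both sit in ${\rm Her}(b)$ and have Murray--von Neumann equivalent support projections $p_\varepsilon:=s(f_\varepsilon(a))$ and $v_0p_\varepsilon v_0^*$ inside ${\rm Her}(b)^{**}$. Pre-composing $v_0$ with a suitable partial isometry $w\in {\rm Her}(b)^{**}$ that inverts $v_0$ on the $p_\varepsilon$-part (roughly $wv_0p_\varepsilon=p_\varepsilon$) yields a new partial isometry $v$ with $vf_\varepsilon(a)=f_\varepsilon(a)$; the multiplicative continuity conditions ($vc,cv^*\in A\otimes{\cal K}$ for $c\in {\rm Her}(a)$ and $vav^*\in {\rm Her}(b)$) are inherited from those of $v_0$ together with the fact that $w$ is supported inside ${\rm Her}(b)^{**}$, so conjugation by it preserves ${\rm Her}(b)$.

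For (C), apply Lemma~\ref{LRordam1} to $a\lesssim b$ at parameter $\eta$ to obtain a unitary $U_0\in\widetilde{A\otimes{\cal K}}$ with $U_0^*f_\eta(a)U_0\in {\rm Her}(b)$; set $u_0=U_0^*$, so $u_0f_\eta(a)u_0^*\in {\rm Her}(b)$, and since $f_\varepsilon(a)=f_\eta(a)^{1/2}f_\varepsilon(a)f_\eta(a)^{1/2}$ also $u_0f_\varepsilon(a)u_0^*\in {\rm Her}(b)$. It remains to find a unitary $W\in\widetilde{{\rm Her}(b)}\subset\widetilde{A\otimes{\cal K}}$ satisfying the exact relation $Wu_0f_\varepsilon(a)=f_\varepsilon(a)$; then $u:=Wu_0$ satisfies both $uf_\varepsilon(a)=f_\varepsilon(a)$ and $uf_\eta(a)u^*=W(u_0f_\eta(a)u_0^*)W^*\in {\rm Her}(b)$, since $W$ preserves ${\rm Her}(b)$ under conjugation. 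Such a $W$ is constructed by lifting the bidual partial isometry of stage (B) (applied to the pair $(u_0f_\varepsilon(a)u_0^*,\,f_\varepsilon(a))$ inside ${\rm Her}(b)$) to an honest unitary of $\widetilde{{\rm Her}(b)}$ using almost stable rank one (Proposition~\ref{Pmatrixsr1}). The principal technical obstacle is this final lifting step: producing an \emph{exact} (rather than approximate) fixing relation from mere dense invertibility, which I expect forms the crux of the argument in Theorem~1.5 of \cite{LinHilbert}.
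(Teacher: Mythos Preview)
Your logical ordering is inverted relative to the paper, and this creates a real gap in your step (B).

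The paper proves (C) \emph{first}, and the tool you flagged as the ``principal technical obstacle'' is simply Pedersen's Theorem~5 (\cite{Pedjot87}): if $y$ lies in a \CA\ $D$ in which invertibles are dense, with polar decomposition $y=v|y|$ in $D^{**}$, then for each $\sigma>0$ there is a unitary $z\in\widetilde D$ with $ze_\sigma(|y|)=ve_\sigma(|y|)$. Concretely, take $w_1$ from Lemma~\ref{LRordam1} at level $\eta/8$ and set $y_1:=w_1 f_{\ep/4}(a)^{1/2}$. The hypothesis $f_{\ep/4}(a)\in{\rm Her}(b)$ is exactly what forces $y_1\in{\rm Her}(b)$ (since $y_1^*y_1=f_{\ep/4}(a)$), so Pedersen applies inside ${\rm Her}(b)$ and produces a unitary $z_1\in\widetilde{{\rm Her}(b)}$ with $z_1e_{1/4}(|y_1|)=w_1e_{1/4}(|y_1|)$. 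Since $e_{1/4}(|y_1|)=e_{\delta_1}(a)$ for some $\delta_1\in(\ep/8,\ep/4)$, the unitary $u:=z_1^*w_1$ satisfies $uf_\ep(a)=f_\ep(a)$ and $uf_\eta(a)u^*=z_1^*(w_1f_\eta(a)w_1^*)z_1\in{\rm Her}(b)$. No appeal to \cite{LinHilbert} is needed.

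For (B) the paper does \emph{not} correct the partial isometry of (A); it iterates (C). With $\eta_n=\ep/4^{n+1}$ one produces unitaries $u_n\in(\widetilde{A\otimes{\cal K}})$ such that $u_{n+1}u_n\cdots u_1$ stabilises on $f_{\eta_n}(a)$ and hence converges in norm on every element of ${\rm Her}(a)$; the polar part of the limit of $u_n\cdots u_1 a^{1/2}$ is the desired $v$. Your proposed correction $v=wv_0$ with $w\in{\rm Her}(b)^{**}$ has a genuine gap: for $vc=w(v_0c)$ to land in $A\otimes{\cal K}$ you need $w$ to be a left \emph{multiplier} of ${\rm Her}(b)$, and the claim that ``$w$ is supported inside ${\rm Her}(b)^{**}$, so conjugation by it preserves ${\rm Her}(b)$'' is false for general bidual partial isometries. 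The paper's iteration sidesteps this because each $u_n$ is an honest unitary of the unitisation.

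Finally (A) drops out in one line: first rotate by a unitary from Lemma~\ref{LRordam1} so that $f_{\ep/4}(a)\in{\rm Her}(b)$, then apply (B). Your direct Cauchy construction for (A) is correct but unnecessary once (B) is in hand.
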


\begin{proof}
There is a unitary $w_1\in (\td A\otimes {\cal K}\widetilde)$ (or $w_1\in {\td A}$) such that $b_1:=w_1f_{\eta/8}(a)w_1^*\in {\rm Her}(b).$
By Lemma \ref{LRordam1}.  Denote $a_1:=w_1f_{\ep/4}(a)w_1^*\in {\rm Her}(b).$
Note that $a_1b_1=a_1.$   Therefore
\beq
\|(b_1-1)w_1f_{\ep/4}(a)\|=\|(b_1-1)w_1f_{\ep/4}(a)w_1^*\|=0.
\eneq
In other words,
$
b_1w_1f_{\ep/4}(a)^{1/2}=w_1f_{\ep/4}(a)^{1/2}.
$
It follows that $y_1:=w_1f_{\ep/4}(a)^{1/2}\in {\rm Her}(b).$   Moreover,
\beq
y_1^*y_1=f_{\ep/4}(a)\andeqn y_1y_1^*=w_1f_{\ep/4}(a)w_1^*.
\eneq
In what follows, for any $d\in A_+^{\bf 1}$ and $1>\dt>0,$ $e_\dt(d)$ denotes 
the open spectral projection of $d$ associated with the interval $(\dt, 1].$
Since ${\rm Her}(b)$ has almost stable rank one, by   Theorem 5 of \cite{Pedjot87},
 there is a unitary $z_1\in {\widetilde{{\rm Her}(b)}}$  such that
\beq\label{1010-8}
z_1e_{1/4}(|y_1|)=w_1e_{1/4}(|y_1|)=w_1(f_{\ep/4}(a)^{1/2}).
\eneq
Note that 
\beq\label{1010-9}
e_{1/4}(|y_1|)=e_{1/4}(f_{\ep/4}(a)^{1/2})=e_{\dt_1}(a)
\eneq
for some $\dt_1\in (\ep/8, \ep/4).$   By \eqref{1010-8} and \eqref{1010-9},
\beq\label{1010-10}
z_1^*w_1e_{\dt_1}(a)=z_1^*(z_1e_{1/4}(|y_1|))=e_{1/4}(|y_1|)=e_{\dt_1}(a).
\eneq
Write $z_1=\af\cdot 1_{\td {\rm Her}(b)}+b'$ for some $b'\in {\rm Her}(b).$ 
Replacing $z_1$ by $\af\cdot 1+b',$ we may view $z_1$ as a unitary 
in $(\td A\otimes {\cal K}\widetilde)$ (or in $\td A$).
Put
$u_1:=z_1^*w_1\in (\td A\otimes {\cal K}\widetilde)$ (or $u_1\in \td A$).
 Then, for any $x\in \overline{f_{2\dt_1}(a)(A\otimes {\cal K})},$  by \eqref{1010-10},
$u_1x=u_1e_{\dt_1}(|y_1|)x=z_1^*w_1e_{\dt_1}(a)x=e_{\dt_1}(a)x=x.$
In particular, $u_1f_{\ep}(a)=f_{\ep}(a).$  We also have,  since $z_1\in {\rm Her}(b)^\sim,$
\beq
u_1f_{\eta}(a)u_1^*=z_1^*(w_1f_{\eta}(a)w_1^*)z_1\le z_1^*b z_1\in {\rm Her}(b).
\eneq
This proves the first part of the lemma. 

To see the second  part of the lemma,  let $\eta_n=\ep/4^{n+1}.$
By  virtue of the first part of the lemma, we obtain a sequence 
of unitaries $\{u_n\}\subset  (\td A\otimes {\cal K}\widetilde)$ (or in $\td A$) such that
\beq
u_nb_{n-1}u_n^*\in {\rm Her}(b), u_nx=x\rforal x\in {\rm Her}(b_{n-1}),
\eneq
where $b_0=f_\ep(a),$ $b_n=u_nf_{\eta_n}(b_{n-1})u_n^*$ for $n=1,2,....$
Note 
\beq
\|u_{n+1}(u_n \cdots u_1f_{\eta_n}(a)-(u_n \cdots u_1f_{\eta_n}(a)))\|
=\|(u_{n+1}-1)(u_n \cdots u_1f_{\eta_n}(b)\|\\
=\|(u_{n+1}-1)(u_n \cdots u_1)f_{\eta_n}(b)(u_1^*\cdots  u_n^*)\|=\|(u_{n+1}-1)b_n\|=0.
\eneq
In other words,
$u_{n+1}u_n\cdots u_1f_{\eta_n}(a)=u_n\cdots u_1f_{\eta_n}(a).$
Moreover, $u_{n+1}u_n\cdots u_1f_{\ep}(a)=f_{\ep}(a)$ for all $n.$
It follows   that $\lim_{n\to\infty} u_{n+1}u_n\cdots u_1 x$ converges in norm for all $x\in {\rm Her}(a)$
and $\lim_{n\to\infty} u_{n+1}u_n\cdots u_1xu_1^*\cdots u_n^*u_{n+1}^*$ converges in 
norm to an element in ${\rm Her}(b).$
Choose  a strictly positive $x$ of ${\rm Her}(a)_+$ with  $\|x\|=1$ and $xf_\ep(a)=f_\ep(a).$
Let $z=\lim_{n\to\infty} u_{n+1}u_n\cdots u_1x\in A.$ 
Then $zz^*=\lim_{n\to\infty} u_{n+1}u_n\cdots u_1x^2u_1^*\cdots u_n^*u_{n+1}^*\in {\rm Her}(b).$
Let $z=vx^{1/2}$ be the polar decomposition in $(A\otimes {\cal K})^{**}$ (or in 
$A^{**}$). Then $v$ is a partial isometry
and, since $x$ is a strictly positive element of ${\rm Her}(a),$ 
%
$vc, cv^*\in A,$   $v^*vc=c=cv^*v,$ $vcv^*\in {\rm Her}(b)$  for all $c\in {\rm Her}(a),$ and
 \beq\nonumber
 vf_\ep(a)=vx^{1/2}f_\ep(a)=\lim_{n\to\infty} u_{n+1}\cdots u_1 x^{1/2}f_\ep(a)
 = \lim_{n\to\infty} u_{n+1}\cdots u_1f_\ep(a)=f_\ep(a).
 \eneq
One also notices that the third part of the lemma holds from the proof above as we may replace $a$ by $u_1au_1^*$
with $u_1f_{\ep/4}(a)u_1^*=f_{\ep/4}(u_1au_1^*)\in {\rm Her}(b).$

\end{proof}

\begin{cor}\label{Pxxxx}
Let $A$ be a \CA\, which has almost stable rank one, and $a\in (A\otimes {\cal K})_+$
(or $a\in A_+$) and $b\in A_+.$ Then 
$a\lesssim b$ if and only if there is $x\in A\otimes {\cal K}$ (or $x\in A$) such that 
$x^*x=a$ and $xx^*\in {\rm Her}(b).$

Moreover, if $a_1, a_2,...,a_n$ are mutually orthogonal elements in $A_+$ 
such that $a_i\sim a_1$ in ${\rm Cu}(A)$ for  $i=1,2,...,n,$ and 
$a:=\sum_{i=1}^n a_i\lesssim b,$  then 
there is a hereditary \SCA\, $A_1\subset {\rm Her}(b)$
such that  there is an isomorphism $\phi: M_n(A_2)\to A_1$ where $A_2={\rm Her}(d)$ for some 
$d\in {\rm Her}(b)$ such that  $\phi^{-1}(d)=d$ and there is $z\in A$ such that $z^*z=a_1$ and $zz^*=d.$
\end{cor}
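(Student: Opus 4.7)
The plan is to prove the two statements in turn. The equivalence in the first sentence follows directly from Lemma \ref{LuniH}. The backward direction is immediate from the definition of Cuntz subequivalence. For the forward direction ($a \lesssim b$ $\Rightarrow$ existence of $x$), apply the third part of Lemma \ref{LuniH} to produce a partial isometry $v \in (A \otimes {\cal K})^{**}$ (or $A^{**}$) with $vc, cv^* \in A \otimes {\cal K}$ (or $A$) for $c \in {\rm Her}(a)$, $v^*v$ acting as the identity on ${\rm Her}(a)$, and $v\,{\rm Her}(a)\,v^* \subset {\rm Her}(b)$. Setting $x := va^{1/2}$, one checks $x^*x = a^{1/2}(v^*v)a^{1/2} = a$ and $xx^* = vav^* \in {\rm Her}(b)$; and $x \in A \otimes {\cal K}$ (or $A$) since $a^{1/2} \in {\rm Her}(a)$.

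For the moreover part, reuse the partial isometry $v$ associated with $a = \sum_{i=1}^n a_i \lesssim b$. Set $d_i := va_iv^* \in {\rm Her}(b)$ for $i = 1,\ldots,n$; these are mutually orthogonal because the $a_i$ are. Put $d := d_1$, $z := va_1^{1/2}$, and $A_2 := {\rm Her}(d)$; then $z \in A$, $z^*z = a_1$, and $zz^* = d$. Since $c \mapsto vcv^*$ is a $*$-isomorphism from ${\rm Her}(a)$ onto a hereditary $C^*$-subalgebra of ${\rm Her}(b)$ which preserves Cuntz equivalence, and since $a_i \sim a_1$ in ${\rm Cu}(A)$, each $d_i$ is Cuntz equivalent to $d$ in $A$.

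For the matrix algebra embedding, use $d_i \sim d$ to produce partial isometries $s_i \in A^{**}$ satisfying $s_i^*s_i = p_d$ (the support projection of $d$), $s_is_i^* = p_{d_i}$, $s_ic, cs_i^* \in A$ for $c \in {\rm Her}(d)$, and $s_ics_i^* \in {\rm Her}(d_i)$; take $s_1 := p_d$. Obtaining the exact support equalities simultaneously with the multiplication-into-$A$ property is the principal technical point: one combines Lemma \ref{LuniH}, applied in both directions of the Cuntz equivalence $d \sim d_i$, with the fact that Cuntz equivalence in the $W^*$-algebra $A^{**}$ implies Murray--von Neumann equivalence of support projections. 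Mutual orthogonality of the $p_{d_i}$ then yields $s_j^*s_k = \delta_{jk}p_d$.

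Define $\phi : M_n(A_2) \to {\rm Her}(b)$ by $\phi(c \otimes e_{ij}) := s_i c s_j^*$, extended linearly. For $c \geq 0$, the factorization $s_i c s_j^* = (s_ic^{1/2})(c^{1/2}s_j^*)$ shows $\phi(c \otimes e_{ij}) \in A$; the relation $s_j^*s_k = \delta_{jk}p_d$ together with $p_dc = c$ for $c \in {\rm Her}(d)$ makes $\phi$ a $*$-homomorphism; injectivity follows from the recovery formula $s_k^*\phi\bigl(\sum_{i,j} c_{ij} \otimes e_{ij}\bigr)s_l = c_{kl}$. Setting $A_1 := \phi(M_n(A_2))$, a short argument using almost stable rank one of $A$ identifies $A_1$ with ${\rm Her}(\sum_i d_i)$, which is hereditary in ${\rm Her}(b)$. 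Finally $\phi(d \otimes e_{11}) = s_1\,d\,s_1^* = p_d\,d\,p_d = d$, giving $\phi^{-1}(d) = d$ under the canonical inclusion $A_2 \hookrightarrow M_n(A_2)$ into the $(1,1)$-corner.
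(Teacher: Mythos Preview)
Your treatment of the first part is correct and matches the paper's proof exactly: take the partial isometry $v$ from the third part of Lemma~\ref{LuniH} and set $x=va^{1/2}$.

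For the ``moreover'' part, the overall architecture is right, but there is a genuine gap at precisely the place you flag as ``the principal technical point''. You claim one can produce $s_i\in A^{**}$ with $s_i^*s_i=p_d$, $s_is_i^*=p_{d_i}$, \emph{and} $s_ic\in A$ for $c\in{\rm Her}(d)$, by combining Lemma~\ref{LuniH} in both directions with Murray--von Neumann equivalence of support projections in $A^{**}$. This does not work: Lemma~\ref{LuniH} applied to $d\lesssim d_i$ gives a $v_i$ with $v_i^*v_i=p_d$ and $v_iv_i^*\le p_{d_i}$, not equality; and while a Cantor--Schr\"oder--Bernstein argument in $A^{**}$ does yield a partial isometry with the exact support projections, that partial isometry is built from infinite alternations and there is no reason it multiplies ${\rm Her}(d)$ back into $A$. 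Consequently your identification ``$A_1={\rm Her}(\sum_i d_i)$'' is unjustified, and the appeal to almost stable rank one does not rescue it.

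The paper sidesteps this entirely by \emph{not} asking for $s_is_i^*=p_{d_i}$. After transporting to $\bar a=vav^*\in{\rm Her}(b)$ it fixes a single element $b_1=zz^*\in{\rm Her}(va_1v^*)$ with $z^*z=a_1$, and then applies the first part of the corollary inside ${\rm Her}(\bar a)$ to obtain $x_i$ with $x_i^*x_i=b_1$ (the \emph{same} element for every $i$) and $x_ix_i^*\in{\rm Her}(va_iv^*)$; one may take $x_1=b_1^{1/2}$. Mutual orthogonality of the $va_iv^*$ forces the $x_ix_i^*$ to be mutually orthogonal, so the polar parts $w_i$ of the $x_i$ satisfy $w_j^*w_k=\delta_{jk}\,p_{b_1}$ automatically. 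One then sets $A_2={\rm Her}(b_1)$, $A_1={\rm Her}(\sum_i x_ix_i^*)$, and $\phi(c\otimes e_{ij})=w_icw_j^*$. The point is that $A_1$ is allowed to be strictly smaller than ${\rm Her}(\sum_i d_i)$; the statement only asks for \emph{some} hereditary $A_1\subset{\rm Her}(b)$. Your argument becomes correct if you simply drop the requirement $s_is_i^*=p_{d_i}$, keep only $s_is_i^*\le p_{d_i}$ (which Lemma~\ref{LuniH} does give), and take $A_1={\rm Her}(\sum_i s_ids_i^*)$ --- at which point it is essentially the paper's proof.
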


\begin{proof}
The first part follows from Lemma \ref{LuniH}.
In fact,  in the second part of Lemma \ref{LuniH}, we choose $x=va^{1/2}.$ 
Then $x^*x=a^{1/2}v^*va^{1/2}=a$ and $xx^*=vav^*\in {\rm Her}(b).$

By Lemma \ref{LuniH}, there is $v\in A^{**}$ such that ${\bar a}:=vav^*\in {\rm Her}(b)$ 
and $vc,cv\in {\rm Her}(b)$ and $v^*vc=c$ for all $c\in {\rm  Her}(a).$
Let $y_0=va_1v^*.$   Then, by the first part of this lemma, there is $z\in A$ such 
that $z^*z=a_1$ and $b_1:=zz^*\in {\rm Her}(y_1).$ 
Note that $b_1\sim va_iv^*$ in ${\rm Her}({\bar a})\subset {\rm Her}(b),$  $i=1,2,...,n.$ 
Thus we have $x_i\in {\rm Her}({\bar a})$ such that $x_i^*x_i=b_1$ and $x_ix_i^*\in {\rm Her}(va_iv^*),$
$i=1, 2,..., n.$  Note that $x_1x_1^*=x_1^*x_1=b_1.$  Put $A_1={\rm Her}(\sum_{i=1}^n x_ix_i^*).$
One then checks that $A_1=M_n(A_2),$ where $A_2={\rm Her}(b_1).$   The corollary follows.

\end{proof}

We would like to end this section with the following folklore.

\begin{lem}\label{Lfolk}
Let $A$ be a \CA\, and $0\le a\le b\le 1$ be elements in $A.$
Then, for any $0<\ep<\ep'<\|a\|,$   there exists $z\in A$ such that
\beq
(a-\ep)_+\lesssim (b-\ep)_+,\,\,\, (a-\ep')_+\le z^*z \tand zz^*\in {\rm Her}((b-\ep)_+).
\eneq
\end{lem}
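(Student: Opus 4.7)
The plan is to establish the three conclusions in two stages. The first conclusion, $(a-\ep)_+\lesssim (b-\ep)_+$, is a Cuntz comparison that does not involve $z$, and I would derive it directly from $a\le b$ by a standard functional-calculus manipulation. With that in hand, I would invoke R\o rdam's lemma characterising Cuntz subequivalence (for any $c\lesssim d$ and any $\eta>0$, there is $r$ with $(c-\eta)_+=r^*dr$) to produce $z$ satisfying the remaining two conditions.

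For the Cuntz comparison, define $h(t):=(t-\ep)_+^{1/2}/t^{1/2}$ on the spectrum of $a$, continuously extended by $0$ at $t=0$, so that $t\,h(t)^2=(t-\ep)_+$. Then, using $a\le b$,
\[
(a-\ep)_+ \;=\; h(a)\,a\,h(a)\;\le\; h(a)\,b\,h(a).
\]
Writing $b=(b-\ep)_++c$ with $c:=b-(b-\ep)_+$ of norm $\le\ep$, I obtain $h(a)\,c\,h(a)\le \ep\,h(a)^2$, and therefore
\[
(a-\ep)_+ \;\le\; h(a)(b-\ep)_+ h(a) + \ep\,h(a)^2.
\]
Since in $C^*(a)$ one has $\ep\,h(a)^2 = \ep(a-\ep)_+/a$, subtracting gives $(a-\ep)_+^2/a \le h(a)(b-\ep)_+ h(a)$, and the right-hand side is Cuntz-equivalent to $(b-\ep)_+^{1/2} h(a)^2 (b-\ep)_+^{1/2} \in {\rm Her}((b-\ep)_+)$. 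Finally, $(a-\ep)_+$ and $(a-\ep)_+^2/a$ are two continuous functions of $a$ with identical zero sets in the spectrum of $a$, so they are Cuntz-equivalent, giving $(a-\ep)_+\lesssim (b-\ep)_+$.

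For the construction of $z$, I would apply R\o rdam's lemma to $(a-\ep)_+\lesssim (b-\ep)_+$ with $\eta:=\ep'-\ep>0$, which supplies $r\in A$ with $((a-\ep)_+-\eta)_+=r^*(b-\ep)_+ r$. The left-hand side simplifies to $(a-\ep')_+$, so setting $z:=(b-\ep)_+^{1/2} r\in A$ yields $z^*z=(a-\ep')_+$ and $zz^*=(b-\ep)_+^{1/2} rr^* (b-\ep)_+^{1/2}\in {\rm Her}((b-\ep)_+)$, as required. The only delicate point in the whole argument is the functional-calculus simplification $(a-\ep)_+ - \ep\,h(a)^2 = (a-\ep)_+^2/a$ together with the verification that $t\mapsto (t-\ep)_+^2/t$ extends continuously by $0$ at $t=0$, so that $(a-\ep)_+^2/a\in A_+$ is well-defined; once that is checked, everything else is routine.
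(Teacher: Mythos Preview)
Your proof is correct, but it takes a somewhat different route from the paper's.

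For the Cuntz comparison $(a-\ep)_+\lesssim (b-\ep)_+$, the paper does not use your function $h(t)=(t-\ep)_+^{1/2}/t^{1/2}$ and the subtraction trick. Instead it picks a bump function $g\in C_0((0,1])$ with $g\equiv 1$ on $[\ep'',1]$, $g\equiv 0$ on $(0,\ep']$, and $(t-\ep')_+\le g(t)\le 1$; the key step is the pointwise inequality $\ep' g(a)\le g(a)^{1/2}ag(a)^{1/2}\le g(a)^{1/2}bg(a)^{1/2}$, which after subtracting $\ep\,g(a)$ yields $(\ep'-\ep)g(a)\le g(a)^{1/2}(b-\ep)_+g(a)^{1/2}$ and hence $(a-\ep')_+\le g(a)\lesssim (b-\ep)_+$. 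The conclusion $(a-\ep)_+\lesssim (b-\ep)_+$ then follows by letting $\ep'\downarrow\ep$. Your argument avoids this limiting step at the cost of a slightly more delicate functional calculus.

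For the element $z$, the paper does not invoke R\o rdam's lemma at all; it sets $z=(\ep'-\ep)^{-1/2}(b-\ep)_+^{1/2}g(a)^{1/2}$ directly, so that $z^*z\ge g(a)\ge (a-\ep')_+$ and $zz^*\in{\rm Her}((b-\ep)_+)$. This is entirely self-contained. Your approach, by contrast, uses R\o rdam's lemma as a black box; in return you obtain the equality $z^*z=(a-\ep')_+$, a mild strengthening of the stated inequality.
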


\begin{proof}
Choose $0<\ep<\ep'<\ep''<\|a\|$ and define $g\in C_0((0,1])$ such 
that $g(t)=1$ for $t\in [\ep'', 1]$ and $(t-\ep')_+\le g(t)\le 1$ for $t\in (\ep', \ep''),$  $g(t)=0$ if $t\in (0, \ep').$
Then  $(a-\ep')_+\le g(a)$ and 
\beq
(\ep') g(a) \le g(a)^{1/2} a g(a)^{1/2}\le g(a)^{1/2} b g(a)^{1/2}.
\eneq
It follows that
\beq
g(a)^{1/2}((b-\ep)_+) g(a)^{1/2}\ge g(a)^{1/2}(b-\ep)g(a)^{1/2}\\
=g(a)^{1/2}bg(a)^{1/2} -\ep g(a)\ge (\ep'-\ep) g(a).
\eneq
Thus
\beq
(a-\ep')_+\le g(a)\le (1/(\ep'-\ep))g(a)^{1/2}(b-\ep)_+ g(a)^{1/2}\lesssim (b-\ep)_+.
\eneq
Since the above holds for any $0<\ep<\ep',$ $(a-\ep)_+\lesssim (b-\ep)_+.$
Let\\ $z= (1/(\ep'-\ep))^{1/2}(b-\ep)_+^{1/2}g(a)^{1/2}.$ 
Then 
\beq
g(a)\le z^*z\andeqn zz^*= (1/(\ep'-\ep))(b-\ep)_+^{1/2} g(a)(b-\ep)_+^{1/2}\in {\rm Her}((b-\ep)_+).
\eneq
\end{proof}

\section{Unitary groups}

The main purpose of this section  is to present a $K_1$-cancellation result for separable 
regular simple \CA s.

\begin{df}\label{Dpi}
Let $A$ be a \CA. Denote by $\td A$  the \CA\, generated by  $A$ and $\C\cdot 1_{\td A},$ where 
$1_{\td A}$ is not in $A.$  Denote by $\pi_\C^A: \td A\to \C\cdot 1_{\td A}=\C$ the quotient map.
We also write $\pi_\C^A$ for the extension from $M_n(\td A)$ to $M_n.$
\end{df}

\begin{df}\label{Du0}
Let $A$ be a unital \CA. Denote by $U(A)$ the unitary group of $A$  and by $U_0(A)$
the path connected component of $U(A)$ containing $1_A.$
\end{df}

\begin{prop}\label{Lmatrix}
Let $A$ be a \CA\, and $u\in U_0(M_n(\td A))$ be a unitary 
with the form $u=\af\cdot 1_{M_n(\td A)}+a$ for some $\af\in \T$ and $a\in M_n(A).$
Then  $u\in U_0(M_n(A)^\sim).$
In particular, if $\af=1,$ then 
$u=\exp(ib_1)\exp(ib_2)\cdots \exp(ib_m)$
for some $b_1, b_2,...,b_m\in M_n(A)_{s.a.}.$
\end{prop}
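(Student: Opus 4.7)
The plan is to lift a path in $U(M_n(\td A))$ witnessing $u\in U_0(M_n(\td A))$ to a path that stays inside $M_n(A)^\sim$ by factoring out the scalar part. Observe first that
$M_n(A)^\sim$ coincides with $(\pi_\C^A)^{-1}(\C\cdot I_n)$ inside $M_n(\td A)$: every $a+\lambda I_n$ with $a\in M_n(A)$ clearly sits there, and conversely if $x\in M_n(\td A)$ satisfies $\pi_\C^A(x)=\lambda I_n$, then $x-\lambda I_n\in \ker(\pi_\C^A)=M_n(A)$, so $x\in M_n(A)^\sim$.

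Choose a continuous path $\gamma:[0,1]\to U(M_n(\td A))$ with $\gamma(0)=1_{M_n(\td A)}$ and $\gamma(1)=u$, and set $s(t):=\pi_\C^A(\gamma(t))$, a continuous path of unitaries in $M_n(\C)\subset M_n(\td A)$ from $I_n$ to $\af\cdot I_n$. Define $\eta(t):=s(t)^*\gamma(t)$. This is a continuous path of unitaries in $M_n(\td A)$ starting at $1$ and ending at $\bar\af u$, and $\pi_\C^A(\eta(t))=s(t)^*s(t)=I_n$ for every $t$. By the first observation, $\eta(t)\in M_n(A)^\sim$ for all $t$, so $\bar\af u\in U_0(M_n(A)^\sim)$. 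Writing $\af=e^{i\theta}$, the path $t\mapsto e^{it\theta}\cdot 1_{M_n(A)^\sim}$ shows $\af\cdot 1\in U_0(M_n(A)^\sim)$; multiplying, $u=(\af\cdot 1)(\bar\af u)\in U_0(M_n(A)^\sim)$, as desired.

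For the ``in particular'' clause, assume $\af=1$. Since $M_n(A)^\sim$ is unital, the standard fact that $U_0$ of a unital \CA\ is generated by exponentials lets us write $u=\exp(ic_1)\cdots\exp(ic_m)$ for some $c_j\in (M_n(A)^\sim)_{s.a.}$. Decompose $c_j=\beta_j\cdot 1+b_j$ with $\beta_j\in \R$ and $b_j\in M_n(A)_{s.a.}$; since $\beta_j\cdot 1$ is central, $\exp(ic_j)=e^{i\beta_j}\exp(ib_j)$, giving $u=e^{i(\beta_1+\cdots+\beta_m)}\prod_j \exp(ib_j)$. Applying $\pi_\C^A$ and using $\pi_\C^A(b_j)=0$ together with $\af=1$ forces $e^{i(\beta_1+\cdots+\beta_m)}=1$, which yields $u=\exp(ib_1)\cdots\exp(ib_m)$. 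The argument is essentially formal and I do not foresee a genuine obstacle; the only idea needed is that dividing $\gamma(t)$ by its scalar part, which is automatically a unitary in $M_n(\C)$, produces a path in the distinguished subalgebra $M_n(A)^\sim$.
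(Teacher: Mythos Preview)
Your proof is correct, and the underlying idea---correcting a path in $U(M_n(\td A))$ by its scalar image so that it lands in $M_n(A)^\sim$---is the same as the paper's. The implementation differs slightly: the paper first writes $u=\exp(ih_1)\cdots\exp(ih_k)$ with $h_j\in M_n(\td A)_{s.a.}$, chooses scalar matrices $a_j$ with $\pi_\C^A(a_j)=\pi_\C^A(h_j)$, and builds the explicit path
\[
u(t)=\exp(ith_1)\cdots\exp(ith_k)\exp(-ita_k)\cdots\exp(-ita_1),
\]
checking that $\pi_\C^A(u(t))=1_n$ for all $t$. You instead take an \emph{arbitrary} path $\gamma$ from $1$ to $u$ and divide pointwise by its scalar image $s(t)=\pi_\C^A(\gamma(t))$, which is a bit more direct since it avoids the preliminary exponential decomposition. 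Both arrive at the same conclusion, and your treatment of the ``in particular'' clause is essentially identical to the paper's.
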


\begin{proof}
Replacing $u$ by $u{\bar \af},$ we may assume that $\pi_\C^{A}(u)=1_n:=1_{M_n}.$
Let $u=\exp(i h_1)\exp(ih_2)\cdots \exp(i h_k),$ where $h_j\in M_n(\td A)_{s.a.}.$
For each $h_j,$ there is a scalar self-adjoint matrix $a_j\in M_n(\C\cdot 1_{\td A})$
such that $\pi_\C^{A}(h_j)=\pi_\C^{A}(a_j).$ 
Note that, since $\pi_\C^{A}(u)=1_n,$ 
$$
\exp(ia_1)\exp(ia_2)\cdots \exp(ia_k)=1_n.
$$
Define, for $t\in [0,1],$
\beq\nonumber
u(t)=\exp(i th_1)\exp(i th_2)\cdots \exp(ith_k)\exp(-ita_k)\exp(-ita_{k-1})\cdots \exp(-ita_1).
\eneq
Then $u(1)=u(\exp(ia_1)\exp(ia_2)\cdots \exp(ia_k))^*=u$ and $u(0)=1_n.$
However,
\beq\nonumber
\pi_\C^{A}(u((t)))=\exp(i ta_1)\exp(ita_2)\cdots \exp(ita_k)\exp(-ita_k)\exp(-it a_{k-1})\cdots \exp(-ita_1)=1_n.
\eneq
Therefore $u(t)\in M_n(A)^\sim$ for all $t\in [0,1].$

Suppose that $\af=1.$ Since now $u\in U_0(M_n(A)^\sim),$ $u=\exp(ih_1)\exp(i h_2)\cdots \exp(i h_m)$
for some $h_1,h_2,...,h_m\in M_n(A)^\sim.$ 
Let $\pi_\C^A(h_j)=\lambda_j\cdot 1_{M_n},$ where $\lambda_i\in \T,$ $j=1,2,...,m.$
Then $\sum_{j=1}^m \lambda_j=2k\pi$ for some integer $k.$ 
Choose $b_j=h_j-\lambda_j\,(=h_j-\lambda_j 1_{M_n}),$ $j=1,2,...,m.$ Then $b_j\in M_n(A).$ Note 
$\lambda_j\cdot 1_{M_n}$
is in the center of $M_n(A)^\sim.$ 
Then
\beq\nonumber
\exp(ib_1)\exp(ib_2)\cdots \exp(i b_m)=\exp(i h_1)\exp(ih_2)\cdots \exp(i h_m)\exp(i\sum_{j=1}^m -\lambda_j)
=u.
\eneq
\end{proof}

Note, in the following statement,  that  the unital  \CA\,  $\td A$ is not divisible in any sense.

\begin{lem}\label{Lshrink}
Let $A$ be a finite regular simple \CA\, which has no nonzero projections,  $u\in U(\td A),$  and $a_1,a_2,..., a_m\in A_{s.a.}.$ Then, for any $a\in A_+\setminus \{0\},$ 
any $\ep>0,$ there is an integer $n_0\ge 2$ such that, for any integer $n\ge n_0,$ there is a hereditary \SCA\, $B\subset A,$ and 
a unitary $v\in \C\cdot 1_{\td A}+B,$ $b_1, b_2,...,b_m\in B$ such that
$B=U^*(M_n({\rm Her}((c-\eta)_+)){{)}}U$
for some unitary $U\in M_n(\td A)$ and 
for some 
$0<\eta<\|c\|,$ where 
$c\in {\rm Her}(a)_+$
such that 
\beq
\|v-u\|<\ep\tand \|a_j-b_j\|<\ep/2(m+1),\,\,1\le j\le m.
\eneq
Moreover, we may assume that $4[c]\le [a].$

(Note that here we identify $A$ with the first corner of $M_n(A).$)
\end{lem}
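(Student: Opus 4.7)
The plan is to exploit the identification ${\rm Cu}(A)=\LAff_+(\widetilde{QT}(A))$ (valid by Definition \ref{Dregular}, since $A$ is finite regular simple with no nonzero projections) together with the almost stable rank one of $M_n(\td A)$ (Proposition \ref{Pmatrixsr1}) to shrink $u$ and the $a_j$ into a matrix-structured hereditary $C^*$-subalgebra.

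First, I would approximate. Write $y:=u-\alpha$ with $\alpha:=\pi_\C^A(u)\in\T$, and choose a quasi-central positive contraction $e\in A_+$ with $\|ey-y\|,\,\|ye-y\|,\,\|[e,a_j]\|$ all less than a suitably small $\dt$. Then $w:=\alpha+e^{1/2}ye^{1/2}$ is close to $u$ and invertible in $\td A$; its polar decomposition $w=u_0|w|$ gives a unitary $u_0\in\td A$, and since $|w|^2-1\in{\rm Her}(e)$, functional calculus yields $u_0\in\alpha+{\rm Her}(e)$. Setting $b_j^{(0)}:=e^{1/2}a_je^{1/2}\in{\rm Her}(e)$, one arranges $\|u_0-u\|<\ep/4$ and $\|b_j^{(0)}-a_j\|<\ep/(4(m+1))$.

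Next, I would endow a hereditary subalgebra containing ${\rm Her}(e)$ with the required matrix structure. Using divisibility of $\LAff_+(\widetilde{QT}(A))$, for $n\ge n_0$ select $c\in{\rm Her}(a)_+$ and $\eta\in(0,\|c\|)$ with $4[c]\le[a]$ and $n[(c-\eta)_+]=[e]$ in ${\rm Cu}(A)$; the bound $n_0$ is controlled by $4\sup_\tau d_\tau(e)/d_\tau(a)$, which one makes finite by passing $e$ into a hereditary subalgebra with bounded scale. Realize $n$ mutually orthogonal Cuntz-equivalent copies of $(c-\eta)_+$ inside ${\rm Her}(a)$ via the divisibility, and apply Corollary \ref{Pxxxx} to obtain a hereditary $C^*$-subalgebra isomorphic to $M_n({\rm Her}((c-\eta)_+))$ of Cuntz class $[e]$. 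By the almost stable rank one of $M_n(\td A)$ (Proposition \ref{Pmatrixsr1}) combined with Lemma \ref{LuniH}, this isomorphism lifts to conjugation by a unitary $U\in M_n(\td A)$, giving $B:={\rm Her}(e)=U^*M_n({\rm Her}((c-\eta)_+))U$. Since $y_0:=u_0-\alpha$ and $b_j^{(0)}$ already lie in ${\rm Her}(e)=B$, taking $v:=u_0\in\alpha+B$ and $b_j:=b_j^{(0)}\in B$ completes the construction.

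The main obstacle is the final identification $B=U^*M_n({\rm Her}((c-\eta)_+))U$: promoting the Cuntz-theoretic equivalence of hereditary subalgebras (from Corollary \ref{Pxxxx}) to genuine conjugation by a unitary in $M_n(\td A)$, rather than merely a partial isometry in the enveloping von Neumann algebra, is where the matrix amplification $n\ge 2$ becomes essential via Proposition \ref{Pmatrixsr1}, and where uniform bounds on $d_\tau(e)/d_\tau(a)$ must be secured to guarantee $n_0$ is finite.
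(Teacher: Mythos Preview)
Your overall strategy matches the paper's, but there is a genuine gap in the final step. You want to conclude
\[
{\rm Her}(e)=U^*\,M_n\bigl({\rm Her}((c-\eta)_+)\bigr)\,U
\]
for some unitary $U\in M_n(\td A)$, starting only from the Cuntz equality $n[(c-\eta)_+]=[e]$. Neither Proposition~\ref{Pmatrixsr1} nor Lemma~\ref{LuniH} (nor Corollary~\ref{Pxxxx}) yields this: Lemma~\ref{LRordam1}/\ref{LuniH} give, for each $\sigma>0$, a unitary moving $f_\sigma(e)$ \emph{into} ${\rm Her}\bigl(\diag((c-\eta)_+,\ldots,(c-\eta)_+)\bigr)$, and vice versa, but Cuntz equivalence never upgrades to an exact equality of hereditary subalgebras under conjugation. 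The amplification $n\ge 2$ does not help here.

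The paper avoids this by reversing the containment. Fixing a strictly positive $e_A$, one approximates $u$ and the $a_j$ inside $\C+{\rm Her}(f_\delta(e_A))$, then uses the compact containment $f_{\delta/2}(e_A)\ll f_{\delta/8}(e_A)$ as a cushion: one chooses $c$ so that $n[c]=[f_{\delta/8}(e_A)]$, sets $c_1=\diag(c,\ldots,c)$, and uses the strict relation $f_{\delta/2}(e_A)\lesssim f_{\eta_0}(c_1)$ to apply Lemma~\ref{LRordam1} twice --- first to move $(c_1-\eta)_+$ into the first corner $A$, then to move $f_\delta(e_A)$ into its hereditary subalgebra. The resulting $B$ is \emph{defined} to be the conjugate of $M_n({\rm Her}((c-\eta)_+))$; one never claims $B={\rm Her}(f_\delta(e_A))$, only ${\rm Her}(f_\delta(e_A))\subset B$, which is all that is needed for $v$ and the $b_j$ to lie where required.

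A minor secondary point: your control of $n_0$ via $4\sup_\tau d_\tau(e)/d_\tau(a)$ needs an argument that this supremum is finite. The paper handles this algebraically by observing that both $f_{\delta_0}(a)$ and $f_{\delta/16}(e_A)$ lie in ${\rm Ped}(A)$, so simplicity gives an integer $k$ with $(k-1)[f_{\delta_0}(a)]\ge[f_{\delta/16}(e_A)]$; one then sets $n_0=4k$.
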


\begin{proof}
Fix a strictly positive element of $e_A$ of $A$ with $\|e_A\|=1.$
Write $u=\af\cdot 1_{\td A}+x$ for some $x\in A$ and $\af\in \T.$
Let $1/2>\ep>0.$ Choose $1/2>\dt>0$ such 
that 
\beq
\|f_\dt(e_A)xf_\dt(e_A)-x\|<\ep/4\andeqn \|f_\dt(e_A)a_jf_\dt(e_A)-a_j\|<\ep/2(m+1),\,\,j=1,2,...,m.
\eneq
Choose $b_j:=f_\dt(e_A)a_jf_\dt(e_A),$ $1\le j\le m.$ 
Let $A_1=\C\cdot 1_{\td A}+{\rm Her}(f_\dt(e_A)).$ 
It is standard to find  a unitary $v\in A_1$ such that
\beq\label{LLinjU-1}
\|v-u\|<\ep.
\eneq
Let $D={\rm Her}(f_{\dt/2}(e_A)).$ Note that $D\subset {\rm Ped}(A).$ 
Choose $0<\dt_0<\dt/2$ such that $f_{\dt_0}(a)\not=0.$
Since both $f_{\dt_0}(a)$ and $f_{\dt/16}(e_A)$ are in ${\rm Ped}(A),$ 
there is an integer $k>2$
such that
\beq
(k-1)[a]\ge (k-1)[f_{\dt_0}(a)]\ge [f_{\dt/16}(e_A)].
\eneq

Choose $n_0=4k.$
Let $n\ge n_0.$   Let $c_0\in A\otimes {\cal K}$ with $0\le c_0\le 1$ such that $d_\tau(c_0)=(1/n)d_\tau(f_{\dt/8}(e_A))$   for all $\tau\in {\widetilde{QT}}(A).$ 
Thus
\beq
4d_\tau(c_0)<d_\tau(a)\rforal \tau\in {\widetilde{QT}}(A)\setminus \{0\}.
\eneq
It follows that $4[c_0]\le [a]$ in  ${\rm Cu}(A).$
Since $A$ has almost stale rank one, by  the first  part of Lemma \ref{LuniH}, there is $c\in {\rm Her}(a)_+$
such that $c\sim c_0$ and $d_\tau(c)=(1/n)\tau(f_{\dt/8}(e_A))$   for all $\tau\in {\widetilde{QT}}(A).$ 
Since 
${\rm Cu}(A)=(V(A)\setminus\{0\})\sqcup \LAff_+(\widetilde{QT}(A))$ and $A$ has no non-zero 
projection,
\beq\label{LLinjU-10}
4[c]\le  [a]\andeqn [f_{\dt/8}(e_A)]=n[c].
\eneq

We now view $A$ as a \SCA\, of $M_n(A)$ (as the first corner of $M_n(A)$).
Let 
$$
c_1:=\diag(\overbrace{c, c,...,c}^{n}).
$$
%
Then, by \eqref{LLinjU-10}, $f_{\dt/2}(e_A)\ll f_{\dt/8}(e_A)\lesssim c_1.$  Therefore there is $0<\eta_0<1$
such that
\beq\label{LLinjU-18}
f_{\dt/2}(e_A)\lesssim f_{\eta_0}(c_1).
\eneq
Choose $0<\eta<\eta_0/2.$ 
%
Since $A$ has almost stable rank one, from the last part of \eqref{LLinjU-10}, by  Lemma \ref{LRordam1},
there is 
a unitary $U_1\in M_n(\td A)$ such that
\beq
c_2:=U_1(c_1-\eta)_+U_1^*\in A.
\eneq
By \eqref{LLinjU-18}, since $A$ has almost stable rank one, applying Lemma \ref{LRordam1} again,
there is a unitary $U_2\in \td A$ such that 
\beq
U_2^*f_\dt(e_A)U_2\in {\rm Her}(c_2).
\eneq
Put $c_3=U_2c_2U_2^*.$   Put $U=\diag(\overbrace{U_2, 1_{\td U},...,1_{\td U}}^{n-1})U_1.$
Then $f_\dt(e_A)\in {\rm Her}(c_3).$ Moreover, 
$B:={\rm Her}(c_3)=U^*M_n({\rm Her}((c-\eta)_+))U^*.$  Then $v\in \C\cdot 1_{\td A}+{\rm Her}(c_3).$

\end{proof}

\begin{lem}\label{C1}
Let $A$ be a  finite separable regular simple \CA\,  and let $u\in \td A$ be a unitary.
 If $\diag(u,1)\in U_0(M_2(\td A)),$ then 
$u\in U_0(\td A).$


\end{lem}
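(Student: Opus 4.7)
The plan is to split into two cases based on whether $A$ has any nonzero projection. If $A$ does, then by simplicity that projection is full, and Proposition~\ref{P1} gives that $A$ has stable rank one; hence $\td A$ has stable rank one, and the natural map $U(\td A)/U_0(\td A) \to K_1(A)$ is an isomorphism by Rieffel's theorem. The hypothesis $\diag(u,1) \in U_0(M_2(\td A))$ forces $[u] = 0$ in $K_1(A)$, whence $u \in U_0(\td A)$.

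The substantive case is when $A$ is stably projectionless, since only then does Lemma~\ref{Lshrink} become available. Given $\ep > 0$, apply Lemma~\ref{Lshrink} to $u$ with some $n \ge 2$ to obtain $v = \alpha \cdot 1_{\td A} + b \in U(\td A)$ with $\|v - u\| < \ep$, where $b \in B$ and $B = U^* M_n({\rm Her}((c-\eta)_+)) U \subset A$ for some unitary $U \in M_n(\td A)$ and some $c \in A_+$ with $4[c] \le [e_A]$. Setting $D := {\rm Her}((c-\eta)_+)$, the map $\phi : B \to M_n(D)$ given by $b \mapsto U^* b U$ is a $*$-isomorphism that extends to a unital $*$-isomorphism $\td\phi : B^\sim \to M_n(D)^\sim$. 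By closeness, $\diag(v,1) \in U_0(M_2(\td A))$, so $[v] = 0$ in $K_1(A)$. Since $B$ is a full hereditary subalgebra of the simple algebra $A$, the inclusion induces $K_1(B) \cong K_1(A)$; composing with $\td\phi_*$ gives $[\td v] = 0$ in $K_1(M_n(D)) = K_1(D)$, where $\td v := \td\phi(v) = \alpha + \phi(b) \in M_n(D)^\sim$.

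The key step is now to show $\td v \in U_0(M_n(D)^\sim)$. Since $D$ inherits almost stable rank one from $A$, $\td D$ has stable rank at most $2$ by Theorem~6.13 of \cite{RS}. Rieffel's theorem, applied to the unital algebra $\td D$, then yields that $U(M_n(\td D))/U_0(M_n(\td D)) \to K_1(\td D) = K_1(D)$ is an isomorphism for $n \ge 2$, so $\td v \in U_0(M_n(\td D))$. Because $\td v = \alpha \cdot 1_{M_n(\td D)} + \phi(b)$ with $\phi(b) \in M_n(D)$, Proposition~\ref{Lmatrix} applied with $D$ in place of $A$ upgrades this to $\td v \in U_0(M_n(D)^\sim)$. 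Transporting through $(\td\phi)^{-1}$ and the unital inclusion $B^\sim \hookrightarrow \td A$, we obtain $v \in U_0(\td A)$. Since $\ep$ was arbitrary and $U_0(\td A)$ is closed in $U(\td A)$ (being a connected component of a locally path-connected topological group), $u \in U_0(\td A)$.

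The main hurdle is bookkeeping across the four unitizations $B^\sim$, $M_n(D)^\sim$, $M_n(\td D)$, and $\td A$, and verifying that the $K_1$-class of $v$ passes correctly through each of them. The crucial trick is to work inside the larger unital algebra $M_n(\td D)$, where the full $M_n(\C)$-worth of scalars is enough for Rieffel's stable-rank-$2$ theorem to put $\td v$ in the identity component; one then descends to the smaller algebra $M_n(D)^\sim$ via Proposition~\ref{Lmatrix}, a step whose very possibility relies on $\td v$ having scalar part of the special form $\alpha \cdot 1$ rather than a general element of $M_n(\C)$.
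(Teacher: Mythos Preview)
Your proof is correct and follows the same overall architecture as the paper's, but with one genuine difference in how you establish that the approximant $v$ has trivial class in the relevant unitary group. The paper uses the full strength of Lemma~\ref{Lshrink}: it also approximates the self-adjoint elements $b_j \in M_2(A)$ appearing in an exponential factorization of $\diag(u,1)$ by elements of $M_2(B)$, thereby exhibiting directly that $\diag(u_1, 1_{\td B}) \in U_0(M_2(B)^\sim)$, and only then applies Rieffel's injectivity to pass from $M_{2n}(\td C)$ down to $M_n(\td C)$. You instead argue more abstractly via $K$-theory: using Brown's theorem that the inclusion of a full hereditary subalgebra induces a $K_1$-isomorphism, you transport $[v]=0$ from $K_1(A)$ to $K_1(B)\cong K_1(D)$. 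Your route is slightly cleaner in that it uses only the $u\to v$ part of Lemma~\ref{Lshrink}, at the cost of invoking Brown's theorem (which the paper uses anyway in Theorem~\ref{TK1}(2)). Both finish identically via Rieffel's stable-rank-$2$ result and Proposition~\ref{Lmatrix}.

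Two small points. Your formula for $\phi$ has the conjugation backwards: since $B = U^* M_n(D) U$, the isomorphism $B \to M_n(D)$ is $b \mapsto U b U^*$, not $b \mapsto U^* b U$. Also, Rieffel's Theorem~2.10 in \cite{Rf2} gives injectivity of $U(M_n(\td D))/U_0(M_n(\td D)) \to K_1(\td D)$ for $n \ge \mathrm{tsr}(\td D)$, which is all you need; calling it an isomorphism is a slight overclaim but harmless here. Finally, the ``$\ep$ arbitrary'' is unnecessary---a single $\ep<2$ already gives $\|u-v\|<2$ with $v\in U_0(\td A)$, hence $u\in U_0(\td A)$.
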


\begin{proof}
Note that, if $A$ has a nonzero projection, then, by Proposition \ref{P1}, 
$A$ has stable rank one. Then the lemma follows from  Theorem 2.10 of \cite{Rf2}. 
So we now assume that $A$ has no nonzero projection.

We may assume that $\pi_\C^{A}(\diag(u,1))=1_2.$   By the second part of Proposition \ref{Lmatrix},
we may write $u=\exp(ib_1)\exp(i b_2)\cdots \exp(i b_m)$
for some $b_j\in M_2(A)_{s.a.},$ $1\le j\le m.$
Let $1/2>\ep>0.$
By virtue of Lemma \ref{Lshrink}, without loss of generality, we may assume that 
$u\in 1_{\td A}+B$ and there are $a_1, a_2,...,a_m\in M_2(B)_{s.a.}$
such that 
\beq
\|\diag(u,1)-\exp(ia_1)\exp(ia_2)\cdots \exp(ia_m)\|<\ep,
\eneq
where $B=U^*M_n({\rm Her}(c))U\subset A$ 
for some 
$c\in A_+,$ 
$n\ge 4,$ 
and where $U\in M_n(\td A)$ (recall that we identify $A$ with the first corner of $M_n(A)$).
 Put $C=U^*{\rm Her}(c)U.$

Write $u=1_{\td A}+b$ for some $b\in B.$ Let $u_1:=1_{\td B}+b\in \td B$ and 
\beq
v_1:=(1_{\td B}+\sum_{n=1}^\infty {ia_1^n\over{n!}})\cdot (1_{\td B}+\sum_{n=1}^\infty {ia_2^n\over{n!}})\cdot \cdots 
\cdot (1_{\td B}+\sum_{n=1}^\infty{ia_m^n\over{n!}}).
\eneq
Hence
\beq
\|\diag(u_1, 1_{\td B})-v_1\|<\ep.
\eneq
Thus $\diag(u_1,1_{\td B})\in U_0(M_2(B)^\sim).$
Recall that $A$ has almost stable rank one. 
Thus the set of  invertible elements of $\td C$ is dense in $C=U^*{\rm Her}(c)U,$  $C$ has stable rank at most 2 (see the proof of 
 Theorem 6.13 of  \cite{RS}),
by Theorem 2.10 of \cite{Rf2},
the map from $U(M_n(\td C))/U_0(M_n(\td C))$
to $U(M_{2n}(\td C))/U_0(M_{2n}(\td C))$ is injective. It follows that 
$u_1\in U_0(M_n(\td C)).$ By Lemma \ref{Lmatrix}, $u_1\in U_0(M_n(C)^\sim )=U_0(\td B).$
It follows that $u\in U_0(\td A).$

\end{proof}

\begin{thm}\label{TK1}
Let $A$ be a separable finite regular simple \CA\, and let $u\in U(\td A).$

(1) For any $a\in A_+\setminus \{0\},$ there is a unitary $v\in \C\cdot 1_{\td A}+{\rm Her}(a)$
such that  $uv^*\in U_0(\td A).$

(2) If $u=\af\cdot 1_{\td A}+x$ for some $\af\in \T$ and $x\in D$ for some hereditary \SCA\, $D$ of $A$
and $u\in U_0(\td A),$ then $v=\af\cdot 1_{\td D}+x\in U_0(\td D).$
\end{thm}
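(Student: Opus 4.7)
The plan is to reduce each part to a computation in $K_1(A)$, leveraging the $K_1$-cancellation Lemma \ref{C1} and Rieffel's stabilization $U(M_2(\td A))/U_0\cong K_1(A)$ (valid because $A$, being regular with almost stable rank one, has stable rank at most two by Theorem 6.13 of \cite{RS}). For part (1), if $A$ has a nonzero projection then Proposition \ref{P1} gives stable rank one for $A$; classical Rieffel theory applied to ${\rm Her}(a)$ (which inherits stable rank one and is full in $A$) yields $U({\rm Her}(a)^\sim)/U_0\cong K_1({\rm Her}(a))=K_1(A)$, directly supplying the required $v$. Assume then that $A$ is projectionless. The key geometric step is to locate a matrix substructure $M_2(A_2)\subset {\rm Her}(a)$ (as a hereditary \SCA) for some nonzero $A_2={\rm Her}(d)\subset {\rm Her}(a)$. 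Because ${\rm Cu}(A)=\LAff_+({\widetilde{QT}}(A))$ is divisible, choose $a_0\in A_+$ with $2[a_0]\le [a]$, consider $\diag(a_0,a_0)\in M_2(A)_+$, and apply Lemma \ref{LuniH} (third part) to obtain orthogonal Cuntz-equivalent $a_1,a_2\in A_+$ with $a_1+a_2\lesssim a$; Corollary \ref{Pxxxx} (with $n=2$) then delivers the desired hereditary embedding.

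Now $A_2$ inherits almost stable rank one, hence has stable rank at most two, and Rieffel's theorem gives $U(M_2(\widetilde{A_2}))/U_0\cong K_1(A_2)=K_1(A)$. Pick $w\in U(M_2(\widetilde{A_2}))$ representing $[u]\in K_1(A)$. The scalar image $\Lambda:=\pi_{\C}(w)\in U(2)$ is path-connected to $I_2$ in $U(2)$, so $v:=w\Lambda^{-1}$ is a unitary in $M_2(A_2)^\sim=\C\cdot 1+M_2(A_2)\subset \C\cdot 1_{\td A}+{\rm Her}(a)$, is $U_0$-equivalent to $w$ in $M_2(\widetilde{A_2})$, and hence $[v]=[u]$ in $K_1(A)$. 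Consequently $\diag(uv^*,1)\in U_0(M_2(\td A))$, and Lemma \ref{C1} concludes that $uv^*\in U_0(\td A)$, proving part (1).

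For part (2), regard $v:=\af\cdot 1_{\td D}+x$ as the image of $u$ under the canonical unital inclusion $\td D\hookrightarrow \td A$. Since $D$ is a nonzero hereditary subalgebra of the separable simple regular $A$, $D$ itself is separable, simple, and regular (Definition \ref{Dregular}); apply part (1) to $D$ with any $a'\in D_+\setminus\{0\}$ to produce $v'\in U(\C\cdot 1_{\td D}+{\rm Her}(a'))\subset U(\td D)$ with $vv'^{*}\in U_0(\td D)$. Since $u\in U_0(\td A)$, we have $[v]=0$ in $K_1(A)=K_1(D)$, so $[v']=0$ in $K_1(D)$; Lemma \ref{C1} applied to $D$ together with Rieffel's $U(M_2(\td D))/U_0\cong K_1(D)$ gives the injectivity $U(\td D)/U_0(\td D)\hookrightarrow K_1(D)$, forcing $v'\in U_0(\td D)$. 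Combining, $v=(vv'^{*})v'\in U_0(\td D)$. The principal obstacle is the geometric construction of $M_2(A_2)\subset {\rm Her}(a)$ in the projectionless case, which requires combining divisibility of ${\rm Cu}(A)$ with the partial-isometry realization of Lemma \ref{LuniH}; once this substructure is in place, the remaining $K_1$-arithmetic is a routine application of Rieffel's stabilization and Lemma \ref{C1}.
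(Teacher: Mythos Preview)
Your argument is correct and, for part~(1), takes a somewhat different route from the paper. The paper first invokes Lemma~\ref{Lshrink} to push $u$ (up to small error) into $1_{\td A}+B$ with $B\cong M_n(C)$ for large $n$ and small $C$ satisfying $4[c]\le[a]$; it then applies Rieffel's Proposition~5.3 inside $M_n(\td C)$ to produce $v_0\in M_2(\td C)$ with $u_1v_1^*\in U_0(M_n(\td C))$, transports $v_0$ into ${\rm Her}(a)$ by a further unitary conjugation in $\td A$, and finally repairs the $U_0$-relation via a $2\times2$ homotopy trick together with Lemma~\ref{C1}. Your approach is more direct: you place an $M_2$-structure $A_1\cong M_2(A_2)$ inside ${\rm Her}(a)$ from the outset (via divisibility of ${\rm Cu}(A)$ and Corollary~\ref{Pxxxx}), invoke Rieffel for $\widetilde{A_2}$ abstractly to choose a representative of $[u]\in K_1(A)$ already lying in $\C\cdot1_{\td A}+A_1$, and finish with a single application of Lemma~\ref{C1}. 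This bypasses Lemma~\ref{Lshrink} and the conjugation-repair step, at the cost of tracking the $K_1$-compatibility of the embedding $A_1\hookrightarrow A$---which works because Corollary~\ref{Pxxxx} arranges the isomorphism $\phi$ to restrict to the inclusion on the corner $A_2={\rm Her}(d)$. One small imprecision: you justify $U(M_2(\td A))/U_0\cong K_1(A)$ by citing stable rank at most two for $A$, but Rieffel's result needs this for $\td A$; that follows from the ideal estimate ${\rm tsr}(\td A)\le\max\{{\rm tsr}(A),{\rm tsr}(\C),{\rm csr}(\C)\}=2$. For part~(2) your proof is essentially the paper's---both reduce to $[v]=0$ in $K_1(D)$ via Brown's stable isomorphism and then descend through Lemma~\ref{C1}; your detour through part~(1) and the auxiliary $v'$ is harmless but unnecessary, since the descent applies directly to $v$.
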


\begin{proof}
 If $A$ has stable rank one, the theorem is well known and follows from 
 the fact  (\cite{Br}) that every nonzero (full) hereditary \SCA\, $D$  of $A$ is stably isomorphic 
 to $A$ and the  inclusion $\iota: D\to A$ induces an isomorphism on $K_1(D),$ and then apply
 Theorem 2.10 of \cite{Rf2}.
 
 We will prove the case that $A$ is not assumed to have stable rank one. Therefore we assume 
$A$ has no nonzero projection (see Proposition \ref{P1}).
 
For (1),  by Lemma \ref{Lshrink}, \wilog, we may assume 
$u=1_{\td A}+b$ for some $b\in B,$ where $B=U^*M_n({\rm Her}((c-\eta)_+))U\subset A$  
for some $0<\eta<\|c\|,$ and $c\in {\rm Her}(a)_+,$
$n>8$ and $4[c]\le [a],$   and where $U\in U(M_n(\td A)).$

Put $C=U^*{\rm Her}((c-\eta)_+)U$ and $u_1:=1_{\td B}+b.$
 Since $GL(\td C)$ is dense in 
$C,$ by (the proof of)   Theorem 6.13 of  \cite{RS}, $C$ has stable rank at most 2.
It follows from Proposition 5.3 of \cite{Rf2} that there exists a unitary $v_0\in M_2(\td C)$
such that $u_1v_1^*\in U_0(M_n(\td C)),$ where 
$v_1:=\diag(v_0,\overbrace{1_{\td C},1_{\td C},...,1_{\td C}}^{n-2}).$

Let $w\in M_2(\C\cdot 1_{\td C})$ be the scalar matrix 
such that $\pi_\C^{C}(v_0)=\pi_\C^{C}(w).$ By replacing
$v_0$ by $v_0w^*,$ we may assume that $\pi_\C^{C}(v_0)=1_{M_2(\td C)}.$ 
Hence $v_0\in M_2(C)^\sim.$   Write $v_0=1_{M_2(\td C)}+y$ for some $y\in M_2(C).$
It follows that $\pi_\C^{B}(u_1v_1^*)=1.$ Then, by Lemma \ref{Lmatrix}, 
$u_1v_1^*\in U_0(\td B).$   Let $v_2:=1_{\td A}+y.$
Then 
$
uv_2^*\in U_0(\td A). 
$
Since $4[c]\le [a]$ and  $A$ has almost stable rank one,  by Lemma 3.2 of \cite{eglnp}, there is a unitary $V\in \td A$
such that  (note that $\eta>0$)
\beq
V^*M_2(C)V\subset {\rm Her}(a).
\eneq
Then
$V^*yV\in {\rm Her}(a).$
Define $v=V^*v_2V.$ Then $v$ has the form described in the lemma.
Put $W:=V^*uv_2^*V.$  Since $uv_2^*\in U_0(\td A), $ one has 
\beq
\begin{pmatrix} W & 0\\ 0 &1\end{pmatrix}\in U_0(M_2(\td A)).
\eneq
Applying Lemma \ref{C1}, one concludes $W\in U_0(\td A).$ 
Thus 
\beq\label{TK1-10}
(V^*uV)v^*\in U_0(\td A).
\eneq
There exists a continuous path of unitaries $\{H(t):t\in [0,1]\}\subset U(M_2(\td A))$
such that 
\beq
H(0)=\begin{pmatrix} V^*uV &  0\\
                                       1 & 0\end{pmatrix}\begin{pmatrix} v^* & 0 \\ 0 & 1\end{pmatrix}
                                       \andeqn
                                       H(1)=\begin{pmatrix} u & 0\\ 1& 0\end{pmatrix}\begin{pmatrix} v^* & 0 \\ 0 & 1
                                       \end{pmatrix}=\begin{pmatrix} uv^* & 0 \\ 0 & 1\end{pmatrix}.
                                       \eneq
By \eqref{TK1-10}, $H(0)\in U_0(M_2(\td A)).$  Therefore $\diag(uv^*,1)\in U_0(M_2(\td A)).$
Applying Lemma \ref{C1}  again,  one obtains $uv^*\in U_0(\td A).$

To see part (2), we may assume that $\af=1.$ 
Let $\iota: D\to A$ be the inclusion map. Since $D$ is a full hereditary \CA\, and $A$ is separable,
it follows that 
$D$ is stably isomorphic to $A$ and $\iota_{*1}: K_1(D)\to K_1(A)$ is an isomorphism (see, for example, Corollary 2.10 of \cite{Br}).
Let $u_1=1_{\td D}+x.$ Then $\iota_{*1}([u_1])=[u]$ is zero in $K_1(A)$ from the assumption that  $u
\in U_0(\td A).$ Thus 
$[u_1]$ is zero in $K_1(D).$
Therefore, for some integer $n\ge 1,$
\beq
\diag(u_1, \overbrace{1_{\td D},..., 1_{\td D}}^{2n+1})\in U_0(M_{2n}(\td D)).
\eneq
Since $D$ is a finite separable regular simple \CA, by repeatedly applying Lemma \ref{C1},
we conclude that $u_1\in U_0(\td D).$

\end{proof}

\begin{cor}\label{C2}
Let $A$ be a separable regular simple \CA. Then 
the map  
\beq
U(M_n(\td A))/U_0(M_n(\td A))\to U(M_{n+1}(\td A))/U_0(M_{n+1}(\td A))
\eneq
is an isomorphism. In particular, $U(\td A)/U_0(\td A)=K_1(A).$
\end{cor}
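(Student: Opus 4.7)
The plan is to establish the stabilization map $U(M_n(\td A))/U_0(M_n(\td A)) \to U(M_{n+1}(\td A))/U_0(M_{n+1}(\td A))$ as an isomorphism for each $n\ge 1$; the final assertion $U(\td A)/U_0(\td A)=K_1(A)$ then follows by passing to the inductive limit and using $K_1(\td A)=K_1(A)$. By Definition~\ref{Dregular}, regularity splits into the purely infinite case (which is classical and reduces to Cuntz) and the finite case with almost stable rank one, so I concentrate on the latter. By Proposition~\ref{P1}, I may further assume $A$ is stably projectionless, since otherwise $A$ has stable rank one and the result is contained in Theorem~2.10 of \cite{Rf2}. A useful initial remark is that the hypotheses of a finite separable regular simple \CA\ are inherited by every $M_k(A)$, so Lemma~\ref{C1} and Theorem~\ref{TK1} are applicable with $A$ replaced by any matrix amplification.

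For injectivity, assume $u\in U(M_n(\td A))$ and $\diag(u,1)\in U_0(M_{n+1}(\td A))$. My first step is to absorb the scalar part: since $\pi_\C^A(u)\in U(M_n)=U_0(M_n)$, I pick $h\in M_n(\C)_{s.a.}$ with $\exp(ih)=\pi_\C^A(u)$ and use the path $t\mapsto u\exp(-ith)$ inside $U(M_n(\td A))$ to replace $u$ by $u\exp(-ih)\in M_n(A)^\sim$. After this reduction $\diag(u,1)\in M_{n+1}(A)^\sim$, and Proposition~\ref{Lmatrix} promotes $\diag(u,1)\in U_0(M_{n+1}(\td A))$ to $\diag(u,1)\in U_0(M_{n+1}(A)^\sim)$. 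The bridging step is the natural unital inclusion
\[
M_{n+1}(A)^\sim\ \hookrightarrow\ M_{2n}(A)^\sim\ \subset\ M_2(M_n(A)^\sim),
\]
obtained by placing $M_{n+1}$ as the upper-left $(n+1)\times(n+1)$ block of $M_{2n}$; under it $\diag(u,1_\C)$ becomes $\diag(u,1_{M_n})$. Setting $B:=M_n(A)$ and applying Lemma~\ref{C1} to $\diag(u,1)\in U_0(M_2(\td B))$ then yields $u\in U_0(\td B)=U_0(M_n(A)^\sim)\subseteq U_0(M_n(\td A))$, as required.

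For surjectivity, let $w\in U(M_{n+1}(\td A))$. An identical scalar-absorbing reduction lets me assume $w\in M_{n+1}(A)^\sim$. I then apply Theorem~\ref{TK1}(1) to the finite separable regular simple \CA\ $B:=M_{n+1}(A)$, choosing any $a\in M_n(A)_+\setminus\{0\}$ embedded in the upper-left $n\times n$ corner of $M_{n+1}(A)$; this produces a unitary $v=\alpha\cdot 1_{\td B}+x$ with $\alpha\in\T$, $x\in {\rm Her}(a)\subseteq M_n(A)$, and $wv^*\in U_0(\td B)\subseteq U_0(M_{n+1}(\td A))$. In block form $v=\diag(u_0,\alpha)$, where $u_0:=\alpha 1_{M_n}+x\in U(M_n(A)^\sim)\subseteq U(M_n(\td A))$. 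Since the scalar unitary $\diag(1_{M_n},\bar\alpha)$ lies in $U_0(M_{n+1}(\td A))$, one obtains $[w]=[v]=[\diag(u_0,1)]$ in the quotient, which is precisely surjectivity.

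The principal obstacle here is not any single estimate, but the careful bookkeeping between the two similar but distinct unital \CA s $M_k(\td A)$ and $M_k(A)^\sim$, and ensuring that a path of unitaries in one is correctly transported to the other through the various unital embeddings. Proposition~\ref{Lmatrix} is the essential technical bridge that makes this threading work, while Lemma~\ref{C1} supplies the cancellation for injectivity and Theorem~\ref{TK1}(1) supplies the normal-form reduction for surjectivity.
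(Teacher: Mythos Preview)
Your argument for the finite case is correct and is precisely what the paper means by ``follows immediately from Theorem~\ref{TK1}'': you spell out the details of applying Theorem~\ref{TK1}(1) (for surjectivity) and Lemma~\ref{C1} (for injectivity) to $M_k(A)$ in place of $A$, threading the passage between $M_k(\td A)$ and $M_k(A)^\sim$ via Proposition~\ref{Lmatrix}. Your initial remark that the hypotheses pass to $M_k(A)$ matches what the paper asserts in \ref{41}, and the paper's own proof of Theorem~\ref{TK1}(2) uses exactly the ``repeated Lemma~\ref{C1}'' step you invoke.

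The purely infinite case, however, is not disposed of by ``reduces to Cuntz''. Cuntz's theorem in \cite{Cu} concerns \emph{unital} purely infinite simple $C^*$-algebras, but here $\td A$ is never simple (it has $A$ as a proper ideal), and $A$ itself may be non-unital so $M_n(A)$ carries no unit to which Cuntz applies. The paper instead shows that $\td A$ is extremally rich (via the fact that hereditary subalgebras $eAe$ are extremally rich, then Propositions~5.4 and~6.8 of \cite{BP95}, using that $\C$ has stable rank one) and has real rank zero (by \cite{Z}), and then invokes Theorem~6.10 of \cite{BP2} on $K_1$-bijectivity for extremally rich $C^*$-algebras of real rank zero. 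So while the purely infinite case is indeed classical, it requires the Brown--Pedersen extremal-richness machinery rather than Cuntz's theorem alone; you should supply that argument or cite it explicitly.
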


\begin{proof}
The finite case follows immediately from Theorem \ref{TK1}.
Suppose that $A$ is  a purely infinite simple \CA. By the comment before Remark 3.1
of \cite{BP95}, $eAe$ is extremally rich for any projection $e\in A.$  Applying Proposition 5.4
of \cite{BP95}, one concludes that $A$ is extremally rich.  Since $\C$ has stable rank one,
by Proposition 6.8 of \cite{BP95}, $\td A$ is extremally  rich.  By \cite{Z}, $A$  has real rank zero,
and, hence,
$\td A$ has real rank zero.   By theorem 6.10 of \cite{BP2}, the corollary follows (when $A$ 
is a purely infinite 
simple \CA).
\end{proof}

\begin{cor}\label{C3}
Let $A$ be a separable   simple ${\cal Z}$-stable \CA. Then 
the map  
\beq
U(M_n(\td A))/U_0(M_n(\td A))\to U(M_{n+1}(\td A))/U_0(M_{n+1}(\td A))
\eneq
is an isomorphism. In particular, $U(\td A)/U_0(\td A)=K_1(A).$
\end{cor}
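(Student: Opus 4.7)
The plan is to reduce Corollary \ref{C3} immediately to Corollary \ref{C2} via the observation recorded in Definition \ref{Dregular}. Recall that a separable simple \CA\ is called regular if it is either purely infinite, or has almost stable rank one together with the decomposition ${\rm Cu}(A)=(V(A)\setminus\{0\})\sqcup\LAff_+(\widetilde{QT}(A))$. In that definition it is noted that, combining Robert's theorem (\cite{Rlz}, also our Theorem \ref{astrk1z}) with Theorem 6.6 of \cite{ESR-Cuntz}, every separable simple ${\cal Z}$-stable \CA\ is regular. So the first step is simply to invoke this fact.

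Once regularity is in hand, Corollary \ref{C2} applies verbatim: the trichotomy in Theorem \ref{astrk1z} divides $A$ into the purely infinite case (handled in Corollary \ref{C2} via the Brown--Pedersen machinery through extremal richness and real rank zero) and the finite regular case (handled in Corollary \ref{C2} by direct appeal to Theorem \ref{TK1}, which in turn uses Lemma \ref{C1} to get $K_1$-injectivity from $U(M_n(\td A))/U_0(M_n(\td A))$ into $U(M_{n+1}(\td A))/U_0(M_{n+1}(\td A))$, together with the matrix stabilization argument of Proposition \ref{Lmatrix} for surjectivity). Thus the bonding map in the inductive system defining $K_1(A)$ is an isomorphism at every level, which yields both claims of the corollary.

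There is essentially no obstacle here beyond citing the right results in the right order; all the substantive work has already been carried out. The only point to mention explicitly is that the statement $U(\td A)/U_0(\td A)=K_1(A)$ follows from the isomorphism of the bonding maps because $K_1(A)$ is by definition the inductive limit $\varinjlim U(M_n(\td A))/U_0(M_n(\td A))$, and an inductive limit along isomorphisms agrees with any of its terms. So the proof can be written in just a line or two: apply Definition \ref{Dregular} to conclude $A$ is regular, then invoke Corollary \ref{C2}.
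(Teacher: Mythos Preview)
Your proposal is correct and matches the paper's approach: Corollary~\ref{C3} is stated without proof, the intended argument being precisely that a separable simple ${\cal Z}$-stable \CA\ is regular (as noted in Definition~\ref{Dregular}), whence Corollary~\ref{C2} applies directly.
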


\section{Comparison in $\td B$}

The main purpose of this section is to present Theorem \ref{LcomparisonU}
and Theorem \ref{TBtildC}.

\begin{NN}\label{41}
Let $A$ be a separable simple \CA\, and let ${\tilde{T}}(A)$ be the cone of densely defined
positive lower semi-continuous traces on $A$ equipped with the topology
of point-wise convergence on elements of the Pedersen ideal  ${\rm Ped}(A)$ of $A.$
By Proposition 3.4 of \cite{TT}, $\widetilde{T}(A)$ has a  Choquet simplex $T_e$ as its base. Let $f$ be a  lower semicontinuous affine function 
on $\widetilde{T}(A)$ such that $f(t)>0$ for all $t\in \widetilde{T}(A)\setminus \{0\}.$
   Then, a standard compactness argument shows that  ${\rm inf}\{f(t): t\in T_e\}>0.$
  By I.1.4 of \cite{Alf}, together with a standard compactness  argument, 
  one obtains an increasing sequence $f_n\in \Aff_+(\widetilde{T}(A))$ such 
  that $\lim_{n\to\infty}f_n(t)=f(t)$ for all $t\in \widetilde{T}(A).$   In other words,
  $f\in \LAff_+(\widetilde{T}(A)).$ 

Now suppose that $A$ is a finite separable  regular simple \CA.
It follows that $M_n(A)$ has almost stable rank one, for all $n\in \N.$ 
Let us assume that every densely defined 2-quasitrace is a trace.
Then $\LAff_+({\widetilde{QT}}(A))=\LAff_+(\tilde{T}(A)).$
Let $a\in {\rm Ped}(A)_+\setminus \{0\}.$ Then $C={\rm Her}(a)$ is algebraically simple.  
Choose $f\in \Aff_+(\tilde{T}(A))\setminus \{0\}$ such that $f(\tau)<d_\tau(a)$  for all $\tau\in \tilde{T}(A)\setminus \{0\}.$
Then there is $c\in (A\otimes {\cal K})_+$ such that $d_\tau(c)=f(\tau)$ for all $\tau\in {\tilde T}(A),$  and
$c\lesssim a.$  Since $A$
has almost stable rank one, 
by  \ref{Pxxxx}, 
there exists $x\in A\otimes {\cal K}$ such that 
$xx^*=c$ and $b:=x^*x\in C_+.$ Note that   $d_\tau(b)=f(\tau)$ for all $\tau\in {\tilde T}(A).$  By Theorem 5.3 of \cite{eglnp}, ${\rm Her}(b)$ has continuous scale.  Note also that
${\rm Her}(b)\otimes {\cal K}\cong A\otimes {\cal K}.$

 In the case that $QT(A)=T(A)$ and $T(A)$ is compact, 
 the map $f\mapsto f|_{T(A)}$  is affine and continuous, and an  order isomorphism 
 from $\LAff_+(\td T(A))$ onto  $\LAff_+(T(A))$ as ${\tilde T}(A)$  is  a convex topological  cone with the metrizable Choquet simplex $T(A)$ as its base (note $0\in \Aff_+(T(A))$--see \ref{Daff}). Therefore, since $A$ is regular (see \ref{Dregular}), in this  case, ${\rm Cu}(A)=(V(A)\setminus\{0\})\sqcup \LAff_+(T(A)).$ 
 
 
\end{NN}

\begin{NN}\label{151}

Throughout this section, $B$ is, unless otherwise stated,  a  finite separable  stably projectionless simple \CA\, 
with continuous scale such that
$M_n(B)$ is regular for each integer $n\ge 1,$ and $QT(B)=T(B)$
(for example, $B$ is an exact   finite separable simple stably projectionless ${\cal Z}$-stable \CA\, with continuous scale
-- see \ref{Dregular}).

Note that, by (the proof of) Theorem 6.13 of \cite{RS}, $B$ has stable rank at most two.
Also, since $B$ has continuous scale, $T(B)$ is compact (see Theorem 5.3 of \cite{eglnp}). 
We also 
have, as $B$ is stably projectionless,  ${\rm Cu}(B)=\LAff_+(T(B)).$ 

If $a\in (\td B\otimes {\cal K})_+\setminus \{0\},$  $\hat{a}(\tau):=\tau(a)$ for all $\tau\in T(B)$ is a function  
in  $\LAff_+(T(B))$ 
(or for all $\tau\in T(\td B)$ as a function in  $\LAff_+(\td B)$)
and $\widehat{[a]}(\tau):=d_\tau(a)$ for all $\tau\in T(B)$ is a function $\LAff_+(T(B))$ 
 (or for $\tau\in T(\td B)$  as a function in $\LAff_+(T(\td B))$). 
 Note that $\widehat{[a]}$ is a lower semicontinuous 
affine functions in $\LAff_+(T(B))$ with values in $(0, \infty].$

Note that, if $a, b\in (B\otimes {\cal K})_+$ and $d_\tau(a)\le d_\tau(b)$ for all $\tau\in T(B),$
then $a\lesssim b$ (recall that $B$ is stably projectionless).  In particular, $B$ has strict comparison
for positive elements.

Moreover, if $a, b\in  (B\otimes {\cal K})_+$ and $[a]\le [b]$ in ${\rm Cu}^\sim(B),$ 
then, as $B$ has stable rank at most 2, by Corollary 4.10 of \cite{RS}, 
\beq
[a]+2[1_{\td B}]\le [b]+2[1_{\td B}] \,\,\,{\rm in}\,\, {\rm Cu}(\td B).
\eneq
It follows that
$d_\tau(a)\le d_\tau(b)\rforal \tau\in T(B).$
Therefore $a\lesssim b,$ or $[a]\le [b]$ in ${\rm Cu}(B).$   This also implies that ${\rm Cu}(B)$ is 
orderly embedded into ${\rm Cu}^\sim(B).$ 

These facts will be repeatedly used.

\end{NN}

Note that $\td B$ is unital. 
Suppose that 
$B\not={\rm Ped}(B).$   Let $a=d+b,$ where 
 $d\in M_r(\C\cdot 1_{\td B})_+\setminus\{0\}$ and $b\in {\rm Ped}(B)_+.$
Then
$\tau(a)=\infty$  for those $\tau\in {\tilde T}(B)$ which is not bounded (see the last part of 4.9
of \cite{EGLN}).
  If $B$ is stable, then $\tau(a)=\infty$
for all $\tau\in {\tilde T}(B).$
Therefore, it is more than convenient 
to consider a hereditary \SCA\,  of $B$ which has continuous scale (see \ref{41}).

 \begin{df}\label{Dcu=}
 Let $A$ be a unital \CA\, with stable rank at most $m$ ($m\ge 1$).
 Denote by ${\rm Cu}(A)^{\circeq}$ the set of equivalence classes 
 of elements in ${\rm Cu}(A)$ with the following equivalence relation: 
 $x\circeq y$ if and only if $x+m[1_A]=y+m[1_A]$ in ${\rm Cu}(A).$
 %
 The the map $x\to (x, 0)$ gives an order embedding 
 from ${\rm Cu}(A)^{\circeq}$ to ${\rm Cu}^\sim (A)$ (see 3.1 of \cite{Rl}, Subsection 
 4.2 and  Corollary 4.10 of \cite{RS}).   So, in  this unital case,  we may view ${\rm Cu}(A)^{\circeq}\subset {\rm Cu}^\sim(A).$
 
 Let $B$ be a non-unital stably finite \CA\, with continuous scale and let $\tau_\C$ be the tracial state of $\td B$ 
 that vanishes on $B.$
 Define
 \beq
 \LAff_+(T(\td B))^\diamond=\{f\in \LAff_+(T(\td B)): f(\tau_\C)\in \{0\}\cup \N\cup \{\infty\}\}.
 \eneq
 %
 %
 %
 \end{df}

 \begin{lem}[Theorem A.6 of \cite{eglnkk0} and Theorem 6.11 of \cite{RS}]\label{TcomparisonintdA}
  Let $B$ be in \ref{151}.
 Then ${\rm Cu}(\td B)^{\circeq}=(K_0(\td B)_+\setminus \{0\})\sqcup \LAff_+(T(\td B))^\diamond$
 (see lines above  Theorem 6.11  of \cite{RS} --also at the end of \ref{Dfep}).
 
  
 \end{lem}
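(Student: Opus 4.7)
The plan is to assemble this from the two cited sources, Theorem A.6 of \cite{eglnkk0} and Theorem 6.11 of \cite{RS}, which cover complementary parts of the picture. The setting of \ref{151} (finite separable stably projectionless simple $B$ with continuous scale, regular with $QT(B)=T(B)$, stable rank at most $2$) is precisely tailored to fall within the scope of both. The natural decomposition to prove is into the compact stratum (coming from projections in $M_n(\td B)$) and the non-compact stratum (the ``purely positive'' part).

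First I would handle the compact stratum. Since $B$ is stably projectionless and $\td B$ is finite (the latter by Proposition \ref{P1}), every nonzero projection $p\in M_n(\td B)$ satisfies $\pi_\C^B(p)\neq 0,$ and so the class $[p]$ maps nontrivially into $K_0(\td B)_+.$ Passing to the $\circeq$-quotient, i.e.\ adding $2[1_{\td B}],$ upgrades Murray--von Neumann equivalence to $K_0$-equivalence (since $\td B$ has stable rank at most $2$), so the compact part of ${\rm Cu}(\td B)^{\circeq}$ is naturally identified with $K_0(\td B)_+\setminus\{0\}.$ This is exactly the content of Theorem 6.11 of \cite{RS} applied to the unital, finite, stable-rank-two \CA\, $\td B.$

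For the non-compact stratum, I would use the dimension map $[a]\mapsto \widehat{[a]}$ with $\widehat{[a]}(\tau)=d_\tau(a)$ for $\tau\in T(\td B).$ Since $\pi_\C^B(a)\in {\cal K}_+,$ evaluation at the scalar trace $\tau_\C$ returns the rank of $\pi_\C^B(a)$ as an operator on $\ell^2,$ which lies in $\{0\}\cup\N\cup\{\infty\};$ this is what produces the $\diamond$ condition. Injectivity on non-compact classes modulo $\circeq$ follows from the strict comparison in $B$ together with the observation in \ref{151} that $[a]\le[b]$ in ${\rm Cu}^\sim(\td B)$ implies $d_\tau(a)\le d_\tau(b)$ for $\tau\in T(B).$ Surjectivity, which is the main content of Theorem A.6 of \cite{eglnkk0}, combines the realization of $\LAff_+(T(B))$ by ${\rm Cu}(B)$ (valid because $B$ is regular and has continuous scale, so $T(B)$ is compact and ${\rm Cu}(B)=\LAff_+(T(B))$ as noted in \ref{41} and \ref{151}) with the realization of the scalar part by orthogonal copies of $1_{\td B}$ to match the prescribed value at $\tau_\C.$

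The main obstacle is this surjectivity step: one must patch a finite or countable direct sum of copies of $1_{\td B}$ (accounting for the integer, or $\infty,$ value at $\tau_\C$) together with a positive element of $B\otimes {\cal K}$ realizing the restriction to $T(B),$ so that the resulting dimension function in $\LAff_+(T(\td B))$ agrees with the prescribed $f\in \LAff_+(T(\td B))^\diamond.$ Because $T(B)$ is a compact base of the cone $\widetilde{T}(B)$ and affine continuity across $\tau_\C$ must be verified (with the convention $f(\tau_\C)\in\{0\}\cup\N\cup\{\infty\}$), this requires the continuous scale of $B$ and the Choquet simplex structure of $T(B)$ discussed in \ref{41}. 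Once the patching is done, the decomposition ${\rm Cu}(\td B)^{\circeq}=(K_0(\td B)_+\setminus\{0\})\sqcup \LAff_+(T(\td B))^\diamond$ follows directly from the assembled statements.
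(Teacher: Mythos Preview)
Your overall decomposition into compact and non-compact strata is right, and you correctly identify the two cited results as the sources. But the injectivity argument for the non-compact part has a real gap.

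First, the sentence ``Injectivity on non-compact classes modulo $\circeq$ follows from the strict comparison in $B$ together with the observation in \ref{151} that $[a]\le[b]$ in ${\rm Cu}^\sim(\td B)$ implies $d_\tau(a)\le d_\tau(b)$'' is argued in the wrong direction: you need to go \emph{from} equality of dimension functions \emph{to} $\circeq$-equality, and the observation you quote runs the other way. Moreover, strict comparison in $B$ applies to elements of $(B\otimes{\cal K})_+$, not $(\td B\otimes{\cal K})_+$; you never explain how to bring $a,b\in (\td B\otimes{\cal K})_+$ into that setting. The paper does this by applying Theorem~6.11 of \cite{RS} to the \emph{simple} algebra $B$ (not to $\td B$, which is not simple and to which that theorem does not directly apply), obtaining ${\rm Cu}^\sim(B)=K_0(B)\sqcup\LAff_+^\sim(T(B))$, and then using the order embedding ${\rm Cu}(\td B)^{\circeq}\hookrightarrow{\rm Cu}^\sim(B)$ from \ref{Dcu=}: if $[\pi_\C^B(a)]=[\pi_\C^B(b)]=n<\infty$ and $d_\tau(a)=d_\tau(b)$ on $T(B)$, then the pairs $([a],n)$ and $([b],n)$ give the same element of $\LAff_+^\sim(T(B))$, hence are equal in ${\rm Cu}^\sim(B)$, which by Corollary~4.10 of \cite{RS} yields $[a]+(n+2)[1_{\td B}]=[b]+(n+2)[1_{\td B}]$ in ${\rm Cu}(\td B)$.

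Second, you do not treat the case $[\pi_\C^B(a)]=[\pi_\C^B(b)]=\infty$ at all, and it does not reduce to the finite case. The paper handles it by a genuine approximation argument: one chooses, for each $\ep>0$, a $\dt>0$ and a $g\in C_0((0,\|b\|])_+$ so that $[\pi_\C^B(f_\ep(a))]=[\pi_\C^B(g(b))]=m<\infty$, then uses an approximate identity for $B\otimes{\cal K}$ and compactness of $T(B)$ to produce $b'\lesssim b$ with $[\pi_\C^B(b')]=m$ and $d_\tau(b')>d_\tau(f_\ep(a))$ on $T(B)$. The finite-rank case just established then gives $[f_\ep(a)]+2[1_{\td B}]\le [b]+2[1_{\td B}]$; letting $\ep\to 0$ and symmetrizing yields $[a]\circeq [b]$. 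This step is not automatic and is missing from your outline.

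Surjectivity via Theorem~A.6 of \cite{eglnkk0} is handled correctly in your sketch.
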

 
 \begin{proof}
 By the assumption, applying Theorem 6.11 of \cite{RS}, one obtains 
 ${\rm Cu}^\sim(B)=K_0(B)\sqcup \LAff_+^\sim(T(B))$ (see also 
 I.1.4 of \cite{Alf}  and the first part of \ref{41}).  Note that, as in the proof of Theorem 6.13 of \cite{RS},
 $B$ has stable rank at most 2.   So, the definition of ${\rm Cu}^\sim(B)$ in \cite{RS} coincides  with
 that in \cite{Rl} (see subsection 4.2 of \cite{RS}). 
 
 Let $x, y\in {\rm Cu}(\td B)$ which are  not represented by projections and are represented 
 by elements $a, b\in (\td B\otimes {\cal K})_+$  such that $[\pi_\C^{B}(a)]=n[1_{\td B}]$
 and $[\pi_\C^B(b)]=m[1_{\td B}]$ 
 for some integers $n, m\ge 0,$ respectively.  Suppose that $d_\tau(a)=d_\tau(b)$
 for all $\tau\in T(\td B).$  We will show that $x\circeq y.$ Let $\tau_\C$ be the tracial state of $T(\td B)$
 which vanishes on $B.$ The condition $d_{\tau_\C}(a)=d_{\tau_\C}(b)$ implies that $n=m.$
 It then follows from Theorem 6.11 of \cite{RS} that there exists $k$ ($=2$)
 such that  (in ${\rm Cu}(\td B)$)
 \beq
 [a]+n[1_{\td B}]+k[1_{\td B}]=[b]+n[1_{\td B}]+k[1_{\td B}].
 \eneq
 Thus $x\circeq y$ (see also Corollary  4.10 of \cite{RS}). 
 
 Now consider the case that  $[\pi_\C^B(a)]=\infty=[\pi_\C^B(b)].$ 
 Then, for any $1>\ep>0,$ $f_\ep(a)\in {\rm Ped}(B\otimes {\cal K}).$
 Hence 
 $[\pi_\C^B(f_\ep(a))]<\infty.$ Also, there is $0<\eta<\ep,$  as 
 $[a]$ is not represented by a projection and $B$ is simple, 
 \beq
 d_\tau(f_\ep(a))<\tau(f_{\eta}(a))\rforal \tau\in T(B).
 \eneq
 
 For $\pi_\C^B(b),$  since $0$ is the only non-isolated point of the spectrum of $\pi_\C^B(b),$
 one may find $g\in C_0((0,\|b\|])_+$ such 
 that $[\pi_\C^B(g(b))]=[\pi_\C^B(f_\ep(a))].$  Let $m:=[\pi_\C^B(f_\ep(a))]<\infty.$
 Note that $T(B)$ is compact. One then can find $0<\dt<\eta/2$ such that $[\pi_\C^B(f_\dt(b))]= [\pi_\C^B(f_\eta(a))]$ and 
  \beq
 d_\tau(f_\ep(a)) < \tau(f_\eta(a))<d_\tau(f_\dt(b))\rforal \tau\in T(B).
  \eneq
  Consider $C:=\overline{f_{\dt/4}(b)(B\otimes {\cal K})f_{\dt/4}(b)}$ and let $\{e_n\}$ be 
  an approximate identity for $B\otimes {\cal K}.$ 
  Then 
  \beq
  \tau(f_{\dt/4}(b)^{1/2}e_nf_{\dt/4}(b)^{1/2})\nearrow \tau(f_{\dt/4}(b))\rforal \tau\in T(B).
  \eneq
  It follows that (recall $T(B)$ is compact) there is $n_0\ge 1$ such 
  that, for all $n\ge n_0,$ 
  \beq\label{TcomparisonintdA-1}
  \tau(f_{\dt/4}(b)^{1/2}e_nf_{\dt/4}(b)^{1/2})>\tau(f_{\dt/2}(b))\ge d_\tau(f_\dt(b))> d_\tau(f_\ep(a))
  \rforal \tau\in T(B).
  \eneq
  Choose $b'=f_{\dt/4}(b)^{1/2}e_{n_0+1}f_{\dt/4}(b)^{1/2}+g(b).$
  Then $[b']\le [b]$ in ${\rm Cu}(\td B).$  
  We also have $[\pi_\C^B(b')]=[\pi_\C^B(g(b)]=[\pi_\C^B(f_\ep(a))]=m.$ 
It follows from \eqref{TcomparisonintdA-1} that 
\beq
d_\tau(b')>d_\tau(f_\ep(a))\rforal \tau\in T(B).
\eneq
 It follows that
 \beq
 d_\tau(b')-m\ge d_\tau(f_\ep(a))-m\rforal \tau\in T(B).
 \eneq
 It follows from  Theorem 6.11 of \cite{RS} that in ${\rm Cu}^\sim(B),$
 \beq
 (b', m)\ge (f_\ep(a),m)
 \eneq
 which means  that, for some integer $k\ge 1,$  in ${\rm Cu}(\td B),$ 
 \beq
 [b]+m[1_{\td B}]+k[1_{\td B}]\ge [b']+m[1_{\td B}]+k[1_{\td B}]\ge [f_\ep(a)]+m[1_{\td B}]+k[1_{\td B}].
 \eneq
 Since $B$ has stable rank at most two, by Corollary 4.10 of \cite{RS},
 $[b]+2[1_{\td B}]\ge  [f_\ep(a)]+2[1_{\td B}].$
 Since the above holds for all $0<\ep<1,$ one concludes that
 \beq
 [b]+2[1_{\td B}]\ge [a]+2[1_{\td B}].
 \eneq
 The same argument also shows that
 \beq
 [a]+2[1_{\td B}]\ge [b]+2[1_{\td B}].
 \eneq
 It follows that  $[a]\circeq [b].$ 
 This shows that the map from ${\rm Cu}(\td B)^\circeq$ to $(K_0(\td B)_+\setminus \{0\})\sqcup \LAff_+(T(\td B))^\diamond$
 is an order embedding. 
 
 The map is surjective follows from the first part of Theorem A.6 (and Def.~A.5) of \cite{eglnkk0}. 
 %
 %
 %

 %

 \end{proof}

 \begin{lem}\label{Linvdense}
Let $A$ be a \CA\, 
which  has almost stable rank one.
Suppose 
that $a\in M_r(A)$ and $b\in M_r(\td A)$ (for some $r\ge 1$).
Then ${\rm dist}(x, LG(M_{2r}(\td A))=0,$ where $LG(M_{2r}(\td A))$ is the set of invertible 
elements in $M_{2r}(\td A)$ and $x=\begin{pmatrix} a & b \\ 0 & 0\end{pmatrix}.$

 \end{lem}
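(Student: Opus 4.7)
The strategy is to perturb $x$ into a block upper triangular matrix whose two diagonal blocks are both invertible: replace $a$ by a nearby invertible in $M_r(\td A)$, and replace the lower-right zero by a small invertible scalar multiple of the unit. Concretely, fix $\ep>0$. Since $A$ has almost stable rank one, Proposition \ref{Pmatrixsr1} guarantees that $GL(M_r(\td A))$ is dense in $M_r(A)$, so I may choose $a'\in GL(M_r(\td A))$ with $\|a'-a\|<\ep$.

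Setting
\[
y_\ep:=\begin{pmatrix} a' & b\\ 0 & \ep\cdot 1_{M_r(\td A)}\end{pmatrix}\in M_{2r}(\td A),
\]
the matrix $y_\ep$ is block upper triangular with both diagonal blocks invertible in $M_r(\td A)$, so $y_\ep\in GL(M_{2r}(\td A))$; indeed
\[
y_\ep^{-1}=\begin{pmatrix} (a')^{-1} & -\ep^{-1}(a')^{-1}b\\ 0 & \ep^{-1}\cdot 1_{M_r(\td A)}\end{pmatrix}.
\]
The difference $y_\ep - x$ is the block diagonal matrix $\diag(a'-a,\ \ep\cdot 1_{M_r(\td A)})$, whose norm equals $\max(\|a'-a\|,\ep)\le \ep$. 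Since $\ep>0$ was arbitrary, this forces $\dist(x, LG(M_{2r}(\td A)))=0$.

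I do not expect any real obstacle here: doubling the matrix size from $r$ to $2r$ supplies exactly the extra coordinates needed to accommodate an invertible scalar block in the lower-right corner, while almost stable rank one (already transferred to $M_r(\td A)$ by Proposition \ref{Pmatrixsr1}) produces the invertible approximant $a'$. The argument uses neither strict comparison, nor any tracial information, nor the simplicity of $A$.
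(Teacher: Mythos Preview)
Your proof is correct and essentially identical to the paper's: both perturb $a$ to an invertible $a'\in M_r(\td A)$ via Proposition~\ref{Pmatrixsr1} and replace the lower-right zero block by $\ep\cdot 1$, obtaining a block upper-triangular matrix with invertible diagonal blocks. The only cosmetic difference is that the paper verifies invertibility by multiplying on the right by $\diag((a')^{-1},\ep^{-1})$ to reach a unipotent matrix, whereas you write down the inverse explicitly.
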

 
 \begin{proof}
 For any $\ep>0,$ by  Proposition \ref{Pmatrixsr1}, there is an invertible element  $y\in M_r(\td A)$ with 
 the inverse $y^{-1}$ such that $\|a-y\|<\ep.$
 Put 
 \beq
 z:=\begin{pmatrix} y & b\\ 0  & \ep\end{pmatrix}\andeqn w:=\begin{pmatrix} y^{-1} &0\\ 0 & 1/\ep\end{pmatrix}.
 \eneq
 Then $\|x-z\|<\ep$ and $w$ is invertible and 
 \beq
 zw=\begin{pmatrix} 1 & b/\ep\\0  & 1\end{pmatrix}.
 \eneq
 Since $\begin{pmatrix} 0 & b/\ep\\ 0 &0\end{pmatrix}$ is a nilpotent, $zw$ is invertible.
 As $w$ is invertible,  $z$ is invertible.

 \end{proof}

 \begin{lem}\label{Linvt}
 Let $B$ be as in \ref{151}.   Suppose that $a,b\in M_r(B)_+$ and $c, d\in M_r(\td B)_+$
 such that $a\lesssim b$ (in $M_r(B)$) and $c\lesssim d$ (in $M_r(\td B)$)
 for some integer $r\ge 1.$ Suppose also
 that $b\perp d.$
 
 Then, for any $\eta>0$ and $\ep>0,$  there is a  unitary $U\in M_{2r}(\td B),$  $\dt>0,$ 
 and $h\in {\rm Her}(f_{\dt}(b+d))_+$ with $\|h\|\le 1$
such that
 \beq
\| U^*f_\eta(a+c)U-h\|<\ep.
 \eneq
 %
  \end{lem}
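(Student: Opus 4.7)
The plan is to first establish the Cuntz subequivalence $f_\eta(a+c)\lesssim b+d$ in $M_r(\td B)$ and then realize it via a unitary conjugation in $M_{2r}(\td B)$ by combining a polar-decomposition argument with the invertible approximation supplied by Lemma~\ref{Linvdense}. The latter step is what forces the passage from $M_r$ to $M_{2r}$: while $M_r(B)$ has almost stable rank one (by Proposition~\ref{Pmatrixsr1}), $M_{2r}(\td B)$ is not known to, so Lemma~\ref{LRordam1} cannot be invoked directly in it.

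For the Cuntz comparison, since $a\lesssim b$ in $M_r(B)\subset M_r(\td B)$ and $c\lesssim d$ in $M_r(\td B)$, taking direct sums gives $a\oplus c\lesssim b\oplus d$ in $M_{2r}(\td B)$. The row-vector element $W\in M_{2r}(\td B)$ whose only non-zero entries are $a^{1/2}$ and $c^{1/2}$ in the top row satisfies $WW^*=(a+c)\oplus 0$ and $W^*W\le 2(a\oplus c)$ (via the standard positivity estimate for $2\times 2$ operator matrices), which gives $[a+c]=[W^*W]\le [a\oplus c]$. The hypothesis $b\perp d$ forces $b^{1/2}d^{1/2}=0$, so the column element built from $b^{1/2}$ and $d^{1/2}$ provides a partial-isometry Cuntz equivalence $b+d\sim b\oplus d$. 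Combining with $f_\eta(a+c)\lesssim a+c$, one obtains $f_\eta(a+c)\lesssim b+d$ in $M_r(\td B)$. Fix small parameters $\ep',\ep''>0$; by the definition of Cuntz subequivalence, pick $y\in M_r(\td B)$ with $\|y^*y-f_\eta(a+c)\|<\ep'$ and $yy^*\in {\rm Her}(b+d)$.

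Next, let $X\in M_{2r}(\td B)$ be the element with $y$ in the top-right $r\times r$ block and zero elsewhere: this is exactly of the form to which Lemma~\ref{Linvdense} applies (with the ``$a$'' of that lemma equal to $0\in M_r(B)$ and its ``$b$'' equal to $y\in M_r(\td B)$), yielding an invertible $Z\in M_{2r}(\td B)$ with $\|Z-X\|<\ep''$. The polar decomposition $Z=U|Z|$ then produces a unitary $U\in M_{2r}(\td B)$, and the identity $ZZ^*=U(Z^*Z)U^*$, together with $\|Z^*Z-X^*X\|,\|ZZ^*-XX^*\|=O(\ep'')$, gives $\|U(X^*X)U^*-XX^*\|=O(\ep'')$. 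Since $X^*X$ has $y^*y$ in the bottom-right and $XX^*$ has $yy^*$ in the top-left, composing $U$ with the scalar swap unitary $P\in M_{2r}(\td B)$ (so as to bring $f_\eta(a+c)$, viewed in the top-left corner, down to the bottom-right block) produces the unitary $U_0:=PU^*\in M_{2r}(\td B)$ satisfying $\|U_0^*\,f_\eta(a+c)\,U_0 - yy^*\|=O(\ep'+\ep'')$, where both sides are read in the top-left corner of $M_{2r}(\td B)$.

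Finally, since $yy^*\in {\rm Her}(b+d)_+$, for any $\ep'''>0$ there exists $\dt>0$ with $h:=f_\dt(b+d)\,yy^*\,f_\dt(b+d)\in {\rm Her}(f_\dt(b+d))_+$ and $\|h-yy^*\|<\ep'''$. One has $\|h\|\le \|yy^*\|\le 1+\ep'$, which can be reduced to $\|h\|\le 1$ by a contractive functional-calculus truncation at negligible cost. Choosing $\ep',\ep'',\ep'''$ small enough that the total error lies below $\ep$ produces the required $U=U_0$, $\dt$, and $h$. The central obstacle, as noted, is the lack of (known) almost stable rank one for $M_{2r}(\td B)$, which prevents a direct application of Lemma~\ref{LRordam1}; Lemma~\ref{Linvdense} is tailored precisely to bypass this, by supplying invertibles near the ``rectangular'' Cuntz witness $y$ and so enabling the polar-decomposition step.
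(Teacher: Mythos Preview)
Your argument is correct. Both your proof and the paper's hinge on Lemma~\ref{Linvdense} to produce the required unitary in $M_{2r}(\td B)$, but the routes differ.

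The paper builds the witness directly from the two separate Cuntz relations: it takes $x_1\in M_r(B)$ with $x_1^*x_1=a$ and $x_1x_1^*\in{\rm Her}(b)$ (using that $M_r(B)$ has almost stable rank one) and $x_2\in M_r(\td B)$ with $x_2^*x_2\approx c$ and $x_2x_2^*\in{\rm Her}(d)$, assembles them into a column $y=\begin{pmatrix}x_1r & 0\\ x_2r & 0\end{pmatrix}$ with $y^*y=f_{\eta/2}(a+c)\oplus 0$, and then invokes Lemma~\ref{Linvdense} together with Pedersen's Theorem~5 to upgrade the polar partial isometry of $y$ to a unitary $W$ giving \emph{exact} containment $Wf_\eta(a+c)W^*\in{\rm Her}(yy^*)\subset{\rm Her}(b\oplus d)$. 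A \emph{second} application of the same Lemma~\ref{Linvdense}/Pedersen combination, now to the row element $z=\begin{pmatrix}b^{1/2}&d^{1/2}\\0&0\end{pmatrix}$, carries ${\rm Her}(b\oplus d)$ over to ${\rm Her}(f_\dt(b+d))$.

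You instead first close up the Cuntz relation $f_\eta(a+c)\lesssim b+d$ in $M_r(\td B)$ (your chain through $a\oplus c\lesssim b\oplus d$ and the $W^*W\le 2(a\oplus c)$ estimate is valid, and the relation descends from $M_{2r}(\td B)$ to $M_r(\td B)$ by the standard corner argument), pick a single witness $y$ for it, and then use just one application of Lemma~\ref{Linvdense}: rather than Pedersen's theorem you take the polar decomposition $Z=U|Z|$ of the nearby invertible and exploit $U(Z^*Z)U^*=ZZ^*$ to obtain approximate unitary conjugation of $y^*y$ to $yy^*$. This is shorter and avoids Pedersen's theorem entirely. The paper's two-step route is more explicit and yields exact (not merely approximate) containment at the intermediate stage, but that extra precision is not needed for the stated conclusion.
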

  (We identify $M_r(\td B)$  with the hereditary \SCA\, 
  $\{ \begin{pmatrix} b & 0\\
                                0 & 0\end{pmatrix}\in M_{2r}(\td B): b\in M_r(\td B)\}.$)
 
 \begin{proof}
 Fix $1>\eta>0.$ There are $x_1\in M_r(B)$ and $x_2\in M_r(\td B)$  
 such that (see \ref{Pxxxx})
 \beq\label{invt-100}
 x_1^*x_1=a,\,\,\, x_1x_1^*\in {\rm Her}(b),\,\, \|x_2^*x_2-c\|<\eta/4,\andeqn 
 x_2x_2^*\in {\rm Her}(d).
 \eneq 
  Put ${x:=\begin{pmatrix} x_1 &   0\\ x_2 & 0\end{pmatrix}}.$ Then 
  ${x^*x=\begin{pmatrix}{{x_1^*x_1}}+x_2^*x_2 & 0\\ 0 &0\end{pmatrix}}.$
       By Proposition 2.2 of \cite{Rr2}, there is $r\in M_r(\td B)$
  such that $r^*(x_1^*x_1+x_2^*x_2)r=f_{\eta/2}(a+c).$ 
  Let $y=\begin{pmatrix} x_1r &  0\\ x_2 r & 0\end{pmatrix}.$
  Then 
 ${y^*y=\begin{pmatrix}f_{\eta/2}(a+c) & 0\\ 0 &0\end{pmatrix}}.$
  
  Let $y=v|y|$ be the polar decomposition of $y$ in $M_{2r}(\td B)^{**}.$ By applying Lemma \ref{Linvdense} above
and Theorem 5 of \cite{Pedjot87}, there is, for any $\sigma>0,$  a unitary $W\in M_{2r}(\td B)$ such that
 \beq
Wf_{\sigma}(|y|)=vf_{\sigma}(|y|).
 \eneq
 We choose a sufficiently small $\sigma$ so that
\beq
 Wf_\eta(a+c)^{1/2}=vf_\eta(a+c)^{1/2}.
 \eneq
 Then
\beq
 Wf_{\eta}(a+c)W^*= vf_\eta(a+c)v^*\le f_{\eta/4}(yy^*).
 \eneq
 Note that
 (see I.1.11 of \cite{BH})
 \beq
 yy^*=\begin{pmatrix} x_1rr^*x_1^* &  x_1rr^*x_2^*\\ x_2rr^*x_1^* & x_2rr^*x_2^*\end{pmatrix}\le 2\begin{pmatrix} 
  x_1rr^*x_1^* & 0\\ 0 & x_2rr^*x_2^*\end{pmatrix}.
 \eneq
Thus $yy^*\in {\rm Her}( f),$ where $f:=\begin{pmatrix} b^{1/2} & 0 \\  0 & d^{1/2}\end{pmatrix}.$
 %
%
%

%

 
 %
 Put $z:=\begin{pmatrix} b^{1/2} & d^{1/2} \\ 0 & 0\end{pmatrix}.$ 
 Then (recall $b\perp d$)
 \beq
 zz^*=b+d\andeqn z^*z=\begin{pmatrix} b & 0 \\ 0 & d\end{pmatrix}.
 \eneq
 For any $1>\ep>0,$ 
 choose $0<\dt<1/2$ such that
 \beq
 \|f_{\dt}(|z|)Wf_\eta(a+c)W^*f_{\dt}(|z|)-Wf_\eta(a+c)W^*\|<\ep.
 \eneq
 Let $z=u|z|$ be the polar decomposition of $z$ in $M_{2r}(\td B)^{**}.$ 
 By Lemma \ref{Linvdense}   above and Theorem 5 of \cite{Pedjot87} again, there is a unitary $W_1\in M_{2r}(\td B)$ such that 
 $W_1f_\dt(|z|)=uf_\dt(|z|).$ It follows that
 \beq
 W_1	f_{\dt}(|z|)W_1^*=f_{\dt}(b+d).
 \eneq
Let $h=W_1f_{\dt}(|z|)Wf_\eta(a+c)W^*f_{\dt}(|z|)W_1^*\in {\rm Her}(f_{\dt}(b+d))_+.$ Then  $\|h\|\le 1$ and 
 \beq\nonumber
  \|W_1W f_\eta(a+c)W^*W_1-h\|<\ep.
 \eneq
 %
 
 %
 %
 %
 \end{proof}
 %
 
 %
 %
 %

 %
 %
 \begin{df}[A.1 of \cite{eglnkk0}]\label{Domega}
 Let $B$ be a separable \CA\, with compact $T(B)\not=\emptyset.$
 Let $a\in M_n(\td B)_+$ define 
 \beq
 \omega([a])=\inf \{\sup\{d_\tau(a)-\tau(c): \tau\in T(B)\}:   0\le c\le 1\andeqn c\in \overline{aM_n(\td B)a}\}.
 \eneq
 Note that $\widehat{[a]}$ is continuous on $T(B)$ if and only 
 if $\omega([a])=0,$ and if $a\sim b,$ then $\omega([a])=\omega([b]).$ 
 (see A.1 of \cite{eglnkk0}).  If $B$ has continuous scale and $p\in M_n(\td B)$  is a projection,
 then  $\hat{p}$ and $\widehat{1_{M_n(\td B)}-p}$ are both lower semicontinuous. Thus both 
 are continuous.
 %
 %
 \end{df}

 %
 %
 %
   \begin{lem}\label{Lorthog}
  Let $B$ be a nonunital simple \CA\, and $a\in M_n(\td B)_+$  with $0\le a\le 1$
  such that $0$ is not an isolated  point of\, ${\rm sp}(a).$ 
  Then, for any $1/2>\dt_0>0,$ there exists $0<\dt<\dt_0$ such that 
  there is    an element 
  $b\in {\rm Her}(f_{\dt}(a))_+\setminus \{0\}$
  such that $b\perp f_{\dt_0}(a)$ and there is a nonzero element $c\in M_n(B)_+\cap {\rm Her}(b)_+.$
  
  Moreover, if $T(B)$ is a nonempty  compact set,  then 
  \beq\label{Lorthog-1}
  \inf\{\tau(c): \tau\in T(B)\}>0.
  \eneq
  
  \end{lem}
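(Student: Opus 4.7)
The plan is to build both $b$ and $c$ simultaneously by functional calculus on $a$, choosing the functions' supports in $(0,\dt_0/2)$ so that orthogonality with $f_{\dt_0}(a)$ comes for free, and exploiting that $\pi_\C^B(a)\in M_n$ has finite spectrum while $0$ accumulates in ${\rm sp}(a)$ so that one can also kill the scalar part and force $c\in M_n(B)$.

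First I would produce a spectral value $\lambda\in{\rm sp}(a)$ with $0<\lambda<\dt_0/4$ and $\lambda\notin{\rm sp}(\pi_\C^B(a))$. This exists because $\pi_\C^B(a)\in M_n$ has only finitely many spectral points, whereas the assumption that $0$ is not isolated in ${\rm sp}(a)$ yields infinitely many points of ${\rm sp}(a)$ in $(0,\dt_0/4)$, so we can avoid the finite set. Pick $\eta>0$ so small that $(\lambda-\eta,\lambda+\eta)\subset(0,\dt_0/2)$ and $(\lambda-\eta,\lambda+\eta)\cap{\rm sp}(\pi_\C^B(a))=\emptyset$. Set $\dt:=(\lambda-\eta)/2$ (automatically $\dt<\dt_0$). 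Choose $g,h\in C_0((0,\infty))$ with $0\le g,h\le 1$, $g$ supported in $(\dt,\dt_0/2)$ and equal to $1$ on $[\lambda-\eta/2,\lambda+\eta/2]$, and $h$ supported in $(\lambda-\eta/2,\lambda+\eta/2)$ with $h(\lambda)=1$. Define $b:=g(a)$ and $c:=h(a)$.

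The verification is then routine: $b\neq 0$ because $g(\lambda)=1$ and $\lambda\in{\rm sp}(a)$; $b\in{\rm Her}(f_\dt(a))$ because $f_\dt\equiv 1$ on ${\rm supp}(g)$, so $b=f_\dt(a)^{1/2}\,b\,f_\dt(a)^{1/2}$; $b\perp f_{\dt_0}(a)$ because $g\cdot f_{\dt_0}\equiv 0$ on $[0,\infty)$ (disjoint supports). Since $gh=h$, we get $c=g(a)^{1/2}c\,g(a)^{1/2}\in{\rm Her}(b)_+$, and $c\neq 0$ because $h(\lambda)>0$. Finally $\pi_\C^B(c)=h(\pi_\C^B(a))=0$, because $h$ vanishes on the (finite) spectrum of $\pi_\C^B(a)$, so $c\in\ker\pi_\C^B\cap M_n(\td B)_+=M_n(B)_+$. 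For the moreover statement, since $B$ is simple, every $\tau\in T(B)\setminus\{0\}$ is faithful on $B$ and hence on $M_n(B)$ via $\tau\otimes{\rm Tr}$, so $\tau(c)>0$ for every $\tau\in T(B)$; the map $\tau\mapsto\tau(c)$ is continuous on $T(B)$, and compactness of $T(B)$ yields $\inf_\tau\tau(c)>0$.

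The one genuine subtlety is the spectral selection of $\lambda$: one must use both that $\pi_\C^B(a)$ lies in a finite-dimensional algebra and that $0$ is a limit point of ${\rm sp}(a)$. Once $\lambda$ is chosen, everything else reduces to picking two bump functions with the right support relations and applying continuous functional calculus, so no deeper machinery from the preceding sections is needed here.
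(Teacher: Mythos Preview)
Your argument is correct. The construction of $b$ via a bump function $g$ supported in $(\dt,\dt_0/2)$ is exactly what the paper means by ``follows from spectral theory immediately,'' and your treatment of the ``moreover'' clause (faithfulness of traces on the simple algebra $M_n(B)$ plus continuity and compactness) matches the paper's.

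The genuine difference is in how you obtain $c$. You select a spectral value $\lambda\in{\rm sp}(a)\cap(0,\dt_0/4)$ that avoids the finite set ${\rm sp}(\pi_\C^B(a))$, and then take $c=h(a)$ for a bump function $h$ concentrated near $\lambda$; this forces $\pi_\C^B(c)=0$ explicitly. The paper instead argues abstractly: since $M_n(B)$ is an essential ideal in $M_n(\td B)$ (because $B$ is nonunital and simple), the hereditary subalgebra $\overline{bM_n(B)b}$ is automatically nonzero whenever $b\neq 0$, and one simply picks any nonzero positive $c$ there. Your route is more constructive and gives $c$ in the commutative C*-algebra generated by $a$, which could be handy if finer control were ever needed; the paper's route is shorter and does not require the observation about the finite spectrum of $\pi_\C^B(a)$, relying only on essentiality of the ideal. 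Both are entirely adequate for the lemma as stated.
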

   \begin{proof} 
  The existence of $b$ 
follows from the spectral theory immediately. 
  For the existence of $c,$ note, since $b\not=0,$
  $\overline{bM_n(B)b}\not=\{0\}.$ Choose $c\in \overline{bM_n(B)b}_+\setminus \{0\}.$
  By the simplicity of $M_n(B),$ $\tau(c)>0$ for all $\tau\in T(B).$ Since 
  we also assume that $T(B)$ is compact, inequality  \eqref{Lorthog-1}  holds.
  
  \end{proof}

  \begin{lem}[Compare Lemma A.3 of \cite{eglnkk0}]\label{LLcomparisonU}
 Let $B$ be as in \ref{151} and 
 $a, b\in M_r(\td B)_+$ (where $r\ge 1$ is an integer). 
 Suppose that $\pi_\C^B(a)\lesssim \pi_\C^B(b)$ and 
 \beq
d_\tau(a)
+4\omega([b])
<d_\tau(b)\rforal \tau\in T(B).
 \eneq
 Then, for any $1>\eta>0,$ there exists a sequence of unitaries 
 $U_n\in M_{2r}(\td B)$  and a sequence of elements $h_n\in {\rm Her}(b)_+$ with $\|h_n\|\le 1$
 such that
 \beq\label{T48eq}
 \lim_{n\to\infty}\|U_n^*f_\eta(a)U_n-h_n\|=0.
 \eneq
 In particular, $a\lesssim b.$
  \end{lem}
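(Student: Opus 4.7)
The plan is to reduce the stated comparison to an application of Lemma~\ref{Linvt}, by splitting $a$ into an $M_r(B)$-part plus a scalar-dominated part and orthogonally splitting ${\rm Her}(b)$ into matching pieces.

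First, using $\pi_\C^B(a)\lesssim\pi_\C^B(b)$ in $M_r(\C)$, I would conjugate $a$ by a suitable scalar unitary $W\in M_r\subset M_r(\td B)$ (which only replaces the eventual $U_n$ by $WU_n$) so as to assume that the range projection $p_a$ of $\pi_\C^B(a)$ is dominated in $M_r$ by the range projection $p_b$ of $\pi_\C^B(b)$. I would then split
\[
a=a_B+a_{\rm sc},\qquad a_{\rm sc}:=a^{1/2}p_a\, a^{1/2},\qquad a_B:=a-a_{\rm sc}.
\]
Both summands are positive ($a_{\rm sc}\le a$ since $p_a\le 1$), and $\pi_\C^B(a_B)=\pi_\C^B(a)-\pi_\C^B(a)^{1/2}p_a\pi_\C^B(a)^{1/2}=0$ because $p_a$ is the range projection of $\pi_\C^B(a)$; hence $a_B\in M_r(B)_+$. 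Moreover $a_{\rm sc}\sim p_a a p_a\lesssim p_a$ in $M_r(\td B)$.

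Next, invoking Lemma~\ref{Lorthog} (applicable because $B$ is stably projectionless, so $0$ lies in the spectrum of $b$) together with the definition of $\omega([b])$ (Definition~\ref{Domega}), I would decompose ${\rm Her}(b)$ into orthogonal positive elements $b_B\in M_r(B)_+\cap {\rm Her}(b)$ and $d_B\in M_r(\td B)_+\cap {\rm Her}(b)$, chosen so that $d_\tau(b_B)>d_\tau(b)-2\omega([b])-\sigma$ for all $\tau\in T(B)$ (with $\sigma>0$ as small as desired and using the compactness of $T(B)$), while ${\rm Her}(d_B)$ accommodates a scalar projection equivalent to $p_b$, giving $p_b\lesssim d_B$ in $M_r(\td B)$. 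The coefficient $4$ in the hypothesis $d_\tau(a)+4\omega([b])<d_\tau(b)$ is precisely calibrated to absorb the $\omega$-loss from confining $b_B$ to $M_r(B)$ together with the slack needed for strict inequalities.

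With these ingredients in hand, the two comparisons required by Lemma~\ref{Linvt} are easy to verify: strict comparison in the regular simple algebra $M_r(B)$ (see \ref{Dfep}, \ref{Dregular}) yields $a_B\lesssim b_B$ in $M_r(B)$, since $d_\tau(a_B)\le d_\tau(a)<d_\tau(b)-4\omega([b])<d_\tau(b_B)$ for every nonzero $\tau\in T(B)$; and $a_{\rm sc}\lesssim p_a\le p_b\lesssim d_B$ in $M_r(\td B)$. Applying Lemma~\ref{Linvt} with $(a,b,c,d)=(a_B,b_B,a_{\rm sc},d_B)$, parameter $\eta$, and error $1/n$ then produces a unitary $U_n\in M_{2r}(\td B)$, $\delta_n>0$, and $h_n\in {\rm Her}(f_{\delta_n}(b_B+d_B))_+\subset {\rm Her}(b)_+$ with $\|U_n^*f_\eta(a_B+a_{\rm sc})U_n-h_n\|<1/n$. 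Since $a_B+a_{\rm sc}=a$, this is exactly \eqref{T48eq}, and the final assertion $a\lesssim b$ then follows by taking $\eta\downarrow 0$ and using that Cuntz classes are suprema of classes of $f_\eta(a)$.

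The main obstacle in this plan is the construction of the orthogonal decomposition of ${\rm Her}(b)$ so that $b_B$ is genuinely in $M_r(B)$ with essentially full tracial mass while $d_B$ has enough room to harbour a scalar projection equivalent to $p_b$; this is where Lemma~\ref{Lorthog}, the continuous-scale and stably projectionless properties of $B$, and the buffer $4\omega([b])$ all have to cooperate. A secondary bookkeeping issue is to confirm that the hypothesis $\pi_\C^B(a)\lesssim\pi_\C^B(b)$ delivers both the alignment $p_a\le p_b$ and the Cuntz domination along $\tau_\C$ needed for the scalar quotient to match up with $d_B$.
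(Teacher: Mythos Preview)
Your plan has a genuine and fatal gap in the decomposition of ${\rm Her}(b)$. You ask for mutually orthogonal $b_B\in M_r(B)_+\cap{\rm Her}(b)$ and $d_B\in{\rm Her}(b)_+$ with $d_\tau(b_B)>d_\tau(b)-2\omega([b])-\sigma$ \emph{and} $p_b\lesssim d_B$ in $M_r(\td B)$. But $p_b=P_1\in M_r(\C\cdot 1_{\td B})$ is a projection of rank $m_1\ge 1$, so $d_\tau(P_1)=m_1$ for every $\tau\in T(B)$; hence $p_b\lesssim d_B$ forces $d_\tau(d_B)\ge m_1$. Since $b_B\perp d_B$ inside ${\rm Her}(b)$, one has $d_\tau(b_B)+d_\tau(d_B)\le d_\tau(b)$, whence $d_\tau(b_B)\le d_\tau(b)-m_1$. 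This is incompatible with your lower bound on $d_\tau(b_B)$ whenever $m_1>2\omega([b])+\sigma$, which is the generic situation (e.g.\ whenever $\omega([b])=0$). Worse, take $r=1$ with $\pi_\C^B(a)\ne 0$: then $p_a=1_{\td B}$ and your split gives $a_{sc}=a^{1/2}\cdot 1_{\td B}\cdot a^{1/2}=a$, $a_B=0$; your chain $a=a_{sc}\lesssim p_a=1_{\td B}\lesssim d_B$ requires $1_{\td B}\lesssim d_B$, which (as $\td B$ is finite) forces $d_B$ invertible and hence ${\rm Her}(b)=\td B$. So in this case your reduction is circular---it presupposes exactly $a\lesssim b$.

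The paper avoids this by never comparing the scalar part of $a$ to the \emph{full} projection $P_1$. Instead it uses an approximate identity $\{E_k\}$ for $M_r(B)$ (diagonal, so $E_kP_i=P_iE_k$) and the approximation $f_{\eta/2}(a)\approx E_k^{1/2}f_{\eta/2}(a)E_k^{1/2}+(1-E_k)^{1/2}P_2(1-E_k)^{1/2}$, whose second summand has \emph{small} trace because $B$ has continuous scale. That small scalar piece is compared to the equally small $(1-E_k)^{1/2}P_1(1-E_k)^{1/2}$, while the large $M_r(B)$-part goes through strict comparison in $B$. The resulting target is not yet in ${\rm Her}(b)$, so a \emph{second} application of Lemma~\ref{Linvt} is needed, absorbing the residual error into a pre-arranged buffer $b_0\in M_r(B)_+\cap{\rm Her}(b)$ with $d_\tau(b_0)>2\omega([b])$ coming from Lemma~A.2 of \cite{eglnkk0}; this is what the coefficient $4$ actually pays for. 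Your single-step scheme cannot be repaired without introducing this approximate-identity splitting and the two-stage structure.
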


\begin{proof}

Let us assume that $0\le a, \, b\le 1.$ 
It suffices to prove that \eqref{T48eq} holds for $f_{\eta_1}(a)$ in place of $a$ for any $0<\eta_1<1.$ 
If $[a]$ is represented by a projection, 
then   $d_\tau(a)$ is continuous. So
\beq
\inf\{d_\tau(b)-d_\tau(a): \tau\in T(B)\}>4\omega([b]).
\eneq
%
Otherwise, 
for  any fixed  $0<\eta_1<1/2,$ 
there exist 
$\eta_1>\eta_1/2>\eta_2>\eta_2/2>\eta_3>0$   such that  
\beq
d_\tau((a-\eta_1)_+)<\tau(f_{\eta_2}(a))<d_\tau((a-\eta_3)_+) <d_\tau(a)\rforal \tau\in T(B).
\eneq
Then 
\beq
\inf\{d_\tau(b)-d_\tau(f_{\eta_1}(a)): \tau\in T(B)\}> 4\omega([b]).
\eneq
Thus, in both cases,  we may assume, \wilog\, (replacing $a$ by $f_{\eta_1}(a)$)  that 
\beq\label{1131-1}
\inf\{d_\tau(b):\tau\in T(B)\}>d=\inf\{d_\tau(b)-d_\tau(a): \tau\in T(B)\}>4\omega(b).
\eneq

By applying  Lemma A.2 of \cite{eglnkk0},
one obtains non-zero  elements
$b_0\in M_r(B)_+$ and $b_1, {{b'}}\in M_r(\td B)_+$ with $b_0\perp b_1$ such that 
\beq
&&b_0+b_1\le b',\,\,\, [b'] =[b], \,\,\pi_\C^B(b_1)=\pi_\C^B(b'),\\\label{Ldecomp-n1}
&&2\omega([b])<d_\tau(b_0)<d/2,\,\,\, d_\tau(b_1)>d_\tau(b)-d/2\rforal \tau\in T(B),
\eneq
and, for any $c_n'\in M_r(B)_+$ with $c_n'\in \overline{b_1M_r({\widetilde B})b_1}$ and $d_\tau(c_n')\nearrow d_\tau(b_1)$ on $T(B),$
there exists $n_0\ge 1$ such that
\beq\label{Notef1-10}
d_\tau(b_1)-d_\tau(c_n')<\omega([b])+(1/64)\inf\{\tau(b_0):\tau\in T(B)\} \rforal \tau\in T(B).
\eneq
In fact, the proof of Lemma A.2 of \cite{eglnkk0} states that $b'=g_{1, \eta_1}(b)$ for some strictly positive function 
$g_{1,\eta_1}$ on $(0, \|b\|]$  as in the proof of Lemma A.2 of \cite{eglnkk0} (we replace $a$ by $b$ and $a'$ by $b'$).
Recall from A.1 of \cite{eglnkk0} that $\omega([b'])=\omega([b]).$
Moreover, $[\pi_\C^B(b_1)]=[\pi_\C^B(b)].$
Replacing $b$ by $b',$ \wilog, we may assume that $b_0+b_1\le b.$ 
We may also assume $0\le b_0+b_1\le b\le 1.$
Note, for any integer $m\ge 1,$ that $b_0+b_1^{1/m}\le b_0^{1/m}+b_1^{1/m}=(b_0+b_1)^{1/m}\le b^{1/m}.$ 
By choosing large $m,$ we may assume that $\pi_\C^B(b_1^{1/m})=\pi_\C^B(b^{1/m})=p_1$ is a projection. 
Replacing $b_1$ by $b_1^{1/m}$ and $b$ by $b^{1/m},$
we may further assume that $\pi_\C^B(b_1)=\pi_\C^B(b)=p_1.$ 
Similarly, we may assume that $\pi_\C^B(a):=p_2$ is also a projection. Since $\pi_\C^B(a)\lesssim \pi_\C^B(b),$
there is a scalar matrix $U_0\in M_r(\C \cdot 1_{\td B})$
 such that $\pi_\C^{B}(U_0^*aU_0)\le p_1.$    Hence we may also assume that $p_2\le p_1.$

 We may further assume  that there are integers $m_2\le m_1$ such that
 \beq
 p_i=\diag(\overbrace{1,1,...,1}^{m_i},0,\cdots, 0),\,\,\,i=1,2.
 \eneq
 Let $P_i=\diag(\overbrace{1_{\td B}, 1_{\td B},...,1_{\td B}}^{m_i},0,\cdots 0),$ $i=1,2.$

Put $d_0=\inf\{\tau(b_0):\tau\in T(B)\}.$  Note that the above 
holds for the case that $\omega([b])=0.$
%
%
Note  that $(b_1-1/n)_+\le b_1$ and $d_\tau((b_1-1/n)_+)\nearrow d_\tau(b_1),$ 
so
by \eqref{Notef1-10},
for some $\dt_1>0,$
\beq\label{Notef1-11+}
d_\tau(b_1)-d_\tau(f_{\dt}(b_1))<\omega([b])+d_0/64\rforal \tau\in T(B)
\eneq
and 
all
$0<\dt<\dt_1.$ 
We also assume $\pi_\C^B(f_\dt(b_1))=p_1$ ($0<\dt\le \dt_1$).
%
Let $\{e_n\}$ be an approximate identity for $B$
such that $e_ne_{n+1}=e_{n+1}e_n=e_n,$ $n=1,2,....$
Put
\beq\label{48-eq1202-1}
E_n=
\diag(e_n, e_n,...,e_n)\in M_r(B),\,\,\, n=1,2,....
\eneq
Then $\{E_n\}$ is an approximate identity for $M_r(B),$ and 
for all $i$ and $n,$ 
 \beq\label{LcomparisonU-10}
 E_nP_i=P_iE_n\andeqn E_n(1-E_k)=0=(1-E_k)E_n, \,{\rm if}\,\, \,\,k\ge n+1.
 \eneq

We have $b_1^{1/2}E_nb_1^{1/2}\nearrow b_1$ (in the strict topology). Let $c_n= E_n^{1/2}b_1E_n^{1/2},$ $n=1,2,....$ 
It follows that $d_\tau(c_n)\nearrow d_\tau(b_1)$ on $T(B).$
By the construction of $b_1,$ there exists $n_0\ge 1$ such that
\beq\label{Notef1-8n}
d_\tau(b_1)-d_\tau(b_1^{1/2}E_nb_1^{1/2})=d_\tau(b_1)-d_\tau(c_n)<\omega([b])+d_0/64
\eneq
for all $\tau\in T(B)$ and for all $n\ge n_0.$

One then computes, by \eqref{Notef1-8n}, \eqref{Ldecomp-n1} and \eqref{1131-1},  that, for $n\ge n_0,$ 
for all $\tau\in T(B),$
\beq\nonumber
&&\hspace{-0.4in}d_\tau(c_n)>d_\tau(b_1)-\omega([b])-d_0/64>d_\tau(b)-d/2-\omega([b])-d_0/64\\\label{Notef1-8}
&&>d_\tau(a)+d/2-\omega([b])-d_0/64>d_\tau(a)+d/4-d_0/64>d_\tau(a).
\eneq

Since $0\le a\le 1$ and $\pi_\C^B(a)=\pi_\C^{B}(P_2),$ for any $0<\eta<1/2,$ 
$\pi_\C^{B}(f_{\eta/2}(a))=\pi_\C^B(a)=p_2.$
%
Put $a_k=E_k f_{\eta/2}(a)E_k,$ $k=1,2,....$  Then, by \eqref{Notef1-8}, 
 $a_k\lesssim c_n$ for any $k\ge 1$ and $n\ge n_0,$
 as $B$ has the strict comparison. 

On the other hand,  
 since $\pi_\C^{B}(f_{\eta/2}(a))=\pi_\C^B(a)=\pi_\C^{B}(P_2)$ and 
 $\pi_\C^{B}(b_1)=\pi_\C^{B}(P_1),$ 
 $$ 
 b_1=P_1+b_{00},\andeqn f_{\eta/2}(a)=P_2+a_{00}
 $$
 for some  $b_{00}, a_{00}\in M_r(B)_{s.a.}.$ 
 For any $\ep>0,$ there is $k_{00}\ge 1$ such that, if $k\ge k_{00},$
 $$
(1-E_k)b_1\approx_{\ep} (1-E_k)P_1\andeqn E_k^{1/2}b_{00}\approx_\ep b_{00} \approx_\ep b_{00}E_k^{1/2}\approx_{\ep} E_k^{1/2}b_{00}E_k^{1/2}
 $$ 
 Thus, by also \eqref{LcomparisonU-10}, $E_k(P_1+b_{00})=E_k^{1/2}P_1E_k^{	1/2}+ E_kb_{00}\approx_{3\ep} E_k^{1/2}b_1E_k^{1/2}.$
 Therefore,  (with a similar consideration for $P_2+a_{00}$)
 \beq\label{LcomparisonU-11}
&& \lim_{k\to\infty}\|(E_k^{1/2}b_1E_k^{1/2}+(1-E_k)^{1/2}P_1(1-E_k)^{1/2})-b_1\|=0\andeqn \\\label{LcomparisonU-12}
&& \lim_{k\to\infty}\|(E_k^{1/2}f_{\eta/2}(a)E_k^{1/2}+(1-E_k)^{1/2}P_2(1-E_k)^{1/2})-f_{\eta/2}(a)\|=0.
 \eneq
 Put $x_k:=E_k^{1/2}f_{\eta/2}(a)E_k^{1/2}+(1-E_k)^{1/2}P_2(1-E_k)^{1/2}$ and $y_k:=E_k^{1/2}b_1E_k^{1/2}+(1-E_k)^{1/2}P_1(1-E_k)^{1/2},$ $k=1,2,....$
Since $y_n\to b_1,$ we may also assume  (by Proposition 2.2 of \cite{Rr2})
that, for all $n\ge n_0,$ 
\beq
f_{\dt_1/8}(y_n)\lesssim b_1.
\eneq
Since, for any fixed $\dt_0>0,$ 
\beq
\lim_{k\to\infty}\|f_{\dt_0}(y_k)-f_{\dt_0}(b_1)\|=0,
\eneq
we may assume, \wilog,  for all $k\ge 1,$   $\pi_\C^B(f_{\dt_1/2}(y_k))=p_1=\pi_\C^B(f_{\dt_1/2}(b_1))$ and
\beq
\tau(f_{\dt_1/2}(y_k))\ge \tau(f_{\dt_1/2}(b_1))-d_0/64 \rforal  \tau\in T(B).
\eneq
It follows 
by
 \eqref{Notef1-11+} (with $\dt=\dt_1/2$) that
\beq\label{Notef1-19}
\tau(f_{\dt_1/2}(y_k))>d_\tau(b_1)-\omega([b])-3d_0/64\rforal \tau\in T(B).
\eneq
Since $M_r(B)$ has continuous scale, there is $k_0\ge n_0$ such that
\beq\label{Notef1-20}
d_\tau(1-E_{n})\le \tau(1-E_{n-1})<{{d_0/64}} \rforal \tau\in T(B)\andeqn \rforal n\ge k_0.
\eneq
It follows that, for $k\ge k_0,$ 
\beq\label{Notef1-21}
&&\tau(f_{\dt_1/2}(y_k))\le d_\tau(y_k)\le d_\tau(c_k)+d_0/64\\
&&=d_\tau(b_1^{1/2}E_kb_1^{1/2})+d_0/64\le d_\tau(b_1)+d_0/64 \rforal \tau\in T(B).
\eneq
Let $g_{\dt_1}\in C_0((0,1])_+$ with $1\ge g(t)>0$ for all $t\in (0, \dt_1/4),$
$g_{\dt_1}(t)\ge t$ for $t\in (0, \dt_1/16),$ $g_{\dt_1}(t)=1$ for $t\in (\dt_1/16, \dt_1/8)$ 
and $g_{\dt_1}(t)=0$ if $t\ge \dt_1/4.$ 

Since $g_{\dt_1}(y_k)f_{\dt_1/2}(y_k)=0,$ by 
\eqref{Notef1-21},
we conclude that, for $k\ge k_0,$ 
\beq\label{Notef1-22}
d_\tau(g_{\dt_1}(y_k))+\tau(f_{\dt_1/2}(y_k))\le d_\tau(y_k)\le d_\tau(b_1)+d_0/64\rforal \tau\in T(B).
\eneq
Then, by \eqref{Notef1-19} and \eqref{Ldecomp-n1},  for all $k\ge k_0,$ 
\beq
d_\tau(g_{\dt_1}(y_k))&\le&(d_\tau(b_1)-\tau(f_{\dt_1/2}(y_k)))+d_0/64\\\label{Notef1-22+}
&\le &  \omega([b])+3d_0/64+d_0/64=\omega([b])+d_0/16<d_0
\eneq
for all $ \tau\in T(B).$
Moreover, since $\pi_\C^B(y_k)=p_1=\pi_\C^B(f_{\dt_1/2}(y_k))$ for 
all $k,$
\beq\label{Notef1-19+}
g_{\dt_1}(y_k)\in M_r(B).
\eneq
It should be noted and will be used later that, for any $0\le x\le 1,$ 
\beq\label{Notef1-5}
x\le f_{\dt}(x)+g_{\dt_1}(x)\rforal 0<\dt<\dt_1/8.
\eneq
Note, for all $k>n+1\ge n>n_0,$  that  $c_n\perp (1-E_k)^{1/2}P_1(1-E_k)^{1/2},$ 
 \beq
a_{k}\lesssim c_n\andeqn  (1-E_k)^{1/2}P_2(1-E_k)^{1/2}\lesssim (1-E_k)^{1/2}P_1(1-E_k)^{1/2}. 
 \eneq
 Put $y_{k,n}':=c_n+(1-E_k)^{1/2}P_1(1-E_k)^{1/2},$ $k=1,2,....$
 
  There exists a function $\chi\in C_0((0,\|a\|)$ with $0\le \chi\le 1$ such 
  that $\chi(f_{\eta/2})=f_\eta.$
 For any $\ep>0,$ there exists $\dt_2>0$ such that, if $0\le e_1,e_2\le 1$
 be elements in a \CA\, with $\|e_1-e_2\|<\dt_2,$ then 
 $\|\chi(e_1)-\chi(e_2)\|<\ep/32.$
 By Lemma \ref{Linvt},   for any fixed 
 $k\ge n+1>n\ge n_0,$
  there  are $\dt_k>0$  and a unitary 
  $V\in M_{2r}(\td B)$ and 
  $h_k\in {\rm Her}(f_{\dt_k}(y_{k,n}'))_+$  with $\|h_k\|\le 1$ 
  such that
  \beq\label{LcomparisonU-30}
 \|V^*f_{\eta/2}(x_k)V-h_k\|<\min\{\ep/32, \dt_2\}.
  \eneq
   By \eqref{LcomparisonU-12}, choose $k_{m,1}\ge k_0$ such that, for all $k\ge k_{m,1},$ 
  \beq\label{LcomparisonU-31}
  \|\chi(f_{\eta/2}(x_k))-\chi(f_{\eta/2}(a))\|<\ep/32.
  \eneq
Thus we have
\beq\label{1130-eq-1}
 \|V^*f_{\eta}(a)V-\chi(h_k)\|<3\ep/32.
\eneq

     %
     %

                  
Recall (see \eqref{LcomparisonU-10}), for  $n>n_0$ and $k\ge \max\{k_{m,1}, k_0, n+1\},$
\beq
&&\hspace{-0.4in}y_k'=E_n^{1/2}b_1E_n^{1/2}+(1-E_k)^{1/2}P_1(1-E_k)^{1/2}=E_n^{1/2}b_1E_n^{1/2}+P_1(1-E_k)P_1\\
&&\hspace{-0.2in}\le E_n^{1/2}b_1E_n^{1/2}+P_1(1-E_n)P_1=E_n^{1/2}b_1E_n^{1/2}+(1-E_n)^{1/2}P_1(1-E_n)^{1/2}=y_n.
\eneq
By \eqref{Notef1-5},
\beq\label{Notef1-33}
y_n\le f_{\dt_1/8}(y_n)+g_{\dt_1}(y_n):={\bar y}_n
\eneq
Thus $h_k\in {\rm Her}({\bar y}_n).$  Choose $\ep'>0$ such 
that 
\beq\label{1130-eq2}
\|f_{\ep'}({\bar y}_n)\chi(h_k)f_{\ep'}({\bar y}_n)-\chi(h_k)\|<\ep/16.
\eneq
By \eqref{Notef1-22+}  
and the strict comparison of $B,$ 
$g_{\dt_1}(y_n) \lesssim b_0.$
Recall $b_1\perp b_0$ and  $f_{\dt_1/8}(y_n)\lesssim  b_1.$
By applying Lemma \ref{Linvt} again, we obtain a unitary $W\in M_{2r}(\td B)$ and 
${\bar h}\in {\rm Her}(b_1+b_0)\subset {\rm Her}(b)_+$
such that
\beq
\|W^*f_{\ep'}({\bar y}_n)W-{\bar h}\|<\ep/8.
\eneq
Let $U=VW.$ Then, by \eqref{1130-eq-1}, \eqref{1130-eq2}, 
\beq
U^*f_\eta(a)U\approx_{3\ep/32} W^*\chi(h_k)W\approx_{\ep/16} W^*f_{\ep'}({\bar y}_n)\chi(h_k)f_{\ep'}({\bar y}_n)W\\
=W^*f_{\ep'}({\bar y}_n)WW^*\chi(h_k)WW^*f_{\ep'}({\bar y}_n)W\approx_{\ep/4} {\bar h}(W^*\chi(h_k)W){\bar h}.
\eneq
Note that ${\bar h}(W^*\chi(h_k)W){\bar h}\in {\rm Her}(b).$  This proves the first part of the lemma.
To see the last part, let $0<\sigma<1/2,$ the first part and Proposition of 2.2 of \cite{Rr2} imply that, for large $n,$ 
\beq
f_\sigma(U_n^*f_\eta(a)U_n)\lesssim b.
\eneq 
It follows that $f_\sigma(f_\eta(a))\sim U_n^*f_\sigma(f_\eta(a))U_n=f_\sigma(U_n^*f_\eta(a)U_n)\lesssim b$
for all $0<\sigma<1/2.$   Hence $f_\eta(a)\lesssim b$ (for all $0\le \eta<1/2$) which implies $a\lesssim b.$
\end{proof}

\begin{lem}\label{1202-1}
Let $B$ be as in \ref{151} and let $a\in M_r(\td B)_+$ with $0\le a\le 1.$
Then there exists a sequence $0\le a_n\le 1$ in ${\rm Her}(a)$ 
such that $[a_n]\le [a_{n+1}],$ $a=\sup\{[a_n]: n\in \N\}$ 
and $\lim_{n\to\infty}\omega([a_n])=0.$

\end{lem}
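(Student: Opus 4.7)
The plan is to construct each $a_n$ as an orthogonal sum of two pieces produced by the decomposition of Lemma~A.2 of \cite{eglnkk0}, the same tool invoked in the proof of Lemma~\ref{LLcomparisonU}, applied to $a$ with a decreasing sequence of parameters $d_n\searrow 0$.  The crucial observation is that the ``$b'$'' produced by that lemma has the form $g(a)$ for a continuous function $g$ strictly positive on $(0,\|a\|]$, so it lies in ${\rm Her}(a)$; hence the resulting orthogonal pair $a_0^{(n)}\in M_r(B)_+$ and $a_1^{(n)}\in M_r(\td B)_+$ with $a_0^{(n)}+a_1^{(n)}\le g(a)$ lies entirely in ${\rm Her}(a)$.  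The decomposition delivers $\pi_\C^B(a_1^{(n)})=\pi_\C^B(a)$, $d_\tau(a_0^{(n)})\in (2\omega([a]),d_n/2)$ and $d_\tau(a_1^{(n)})>d_\tau(a)-d_n/2$ on $T(B)$, together with the approximation property recorded as \eqref{Notef1-10}.

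I would then set $a_n:=a_0^{(n)}+a_1^{(n)}\in {\rm Her}(a)$, rescaling by a positive constant no greater than $1$ if necessary to ensure $0\le a_n\le 1$ (this preserves the Cuntz class).  The monotonicity $[a_n]\le[a_{n+1}]$ will be arranged by choosing successive decompositions inductively, exploiting the flexibility in the construction of Lemma~A.2 and the almost-stable-rank-one property that moves approximants into prescribed hereditary subalgebras.  The equality $\sup_n[a_n]=[a]$ in ${\rm Cu}(\td B)$ then follows from the pointwise convergence $d_\tau(a_n)\ge d_\tau(a)-d_n/2+2\omega([a])\nearrow d_\tau(a)$ on $T(B)$, combined with the $\circeq$-identification provided by Lemma~\ref{TcomparisonintdA}.

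The main obstacle is forcing $\omega([a_n])\to 0$.  Since $\omega$ is subadditive over orthogonal sums (as recorded after Definition~\ref{Domega}) and a single application of the decomposition only yields $\omega([a_1^{(n)}])\le\omega([a])+O(d_n)$ through \eqref{Notef1-10}, the value $\omega([a_n])$ would remain stuck near $\omega([a])$, which may be strictly positive.  To circumvent this I would iterate inside each $n$th step: apply Lemma~A.2 of \cite{eglnkk0} successively $k_n$ times to the latest ``$a_1$''-piece with exponentially shrinking parameters $d_n^{(1)}>d_n^{(2)}>\cdots>d_n^{(k_n)}$, accumulating the orthogonal $M_r(B)_+$-tails into a single piece with essentially constant $d_\tau$ (so with vanishing $\omega$), while the residual retains $\omega$-defect bounded by the final parameter through a repeated use of \eqref{Notef1-10}.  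Choosing $k_n$ large enough in terms of $n$ then delivers $\omega([a_n])<1/n$; the delicate point is verifying that the iteration genuinely contracts $\omega$ at each inner step, which relies on the precise form of \eqref{Notef1-10} together with the orthogonality structure that keeps the accumulated tails interacting additively with the new residual.
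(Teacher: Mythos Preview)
Your iteration does not contract $\omega$, and this is a genuine gap rather than a delicate verification.  Inequality \eqref{Notef1-10} only says that $d_\tau(b_1)-d_\tau(c'_n)<\omega([b])+\text{(small)}$, which bounds $\omega([b_1])$ from \emph{above} by $\omega([b])+\text{(small)}$; it never produces a value strictly below $\omega([b])$.  Applying Lemma~A.2 of \cite{eglnkk0} to the residual $a_1^{(j)}$ therefore yields $\omega([a_1^{(j+1)}])\lesssim\omega([a_1^{(j)}])$, so after $k_n$ rounds you still have $\omega$ stuck near $\omega([a])$.  Worse, the hypothesis of that lemma (compare \eqref{1131-1}) requires the gap parameter to exceed $4\omega$ of the input, so your inner parameters $d_n^{(j)}$ cannot shrink below $4\omega([a])$ and the ``exponentially shrinking'' schedule is unavailable when $\omega([a])>0$.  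Finally, the accumulated $b_0$-tails from Lemma~A.2 are in $M_r(B)_+$ but are not known to have continuous $\widehat{[\,\cdot\,]}$; the bounds $2\omega([a])<d_\tau(b_0)<d/2$ control only their size, not their $\omega$, so the orthogonal sum need not have small $\omega$ either.

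The paper's argument avoids this trap by building elements with small $\omega$ \emph{directly} rather than by iteration.  Using an approximate identity $\{E_k\}$ of $M_r(B)$ commuting with the scalar part $P$ of $a$, one approximates $f_\eta(a)$ by $E_k^{1/2}f_\eta(a)E_k^{1/2}+(1-E_k)^{1/2}P(1-E_k)^{1/2}$; the first summand lies in $M_r(B)$, and the key step is to replace it by an element $c\in M_r(B)_+$ with \emph{continuous} $d_\tau(c)$, which exists because ${\rm Cu}(B)=\LAff_+(T(B))$.  The resulting $b_{k,\eta}=f_{\delta/2}((1-E_{k+1})^{1/2}P(1-E_{k+1})^{1/2})+c$ then has $\omega$ bounded by $d_\tau(1-E_{k+1})$, which tends to $0$ since $B$ has continuous scale.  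Lemma~\ref{LLcomparisonU} is then invoked---now legitimately, since $4\omega([b_{k,\eta}])$ is small---to place an equivalent copy inside ${\rm Her}(f_\eta(a))$ and to obtain the ordering $[a_n]\le[a_{n+1}]$ and $f_{8\eta}(a)\lesssim a_n$.  The ingredients you are missing are the direct use of ${\rm Cu}(B)=\LAff_+(T(B))$ to manufacture continuous dimension functions, and the continuous-scale estimate $d_\tau(1-E_k)\to 0$ to kill the residual $\omega$.
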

\begin{proof}
If there is a sequence $t_n\in (0,1)$ such that $t_{n+1}<t_n$ and $\lim_{n\to\infty}t_n=0$ 
and $t_n\not\in {\rm sp}(a)$ for all $n,$ then one obtains an increasing 
sequence of projections $\{p_n\}$ such that $p_n\in {\rm Her}(a),$ and  such that
for any $0<\ep<1,$ $f_{{\ep}}(a)\le p_n$ for all sufficiently large $n.$ Let $a_n:=p_n.$
Then $\omega([p_n])=0$ and $[a]=\sup\{[a_n]: n\in  \N\}.$ 
Thus we assume that $[0,\eta_0]\subset {\rm sp}(a)$ for some $\eta_0\in (0,1].$ 

As in the proof of  Lemma \ref{LLcomparisonU}, we may assume 
that, for some integer $m\ge 1,$ 
\beq
\pi_\C^B(a)=\diag(\overbrace{1,1,...,1}^m,0,\cdots, 0):=p.
\eneq
Let $P:=\diag(\overbrace{1_{\td B}, 1_{\td B},...,1_{\td B}}^m, 0,...,0).$
Let $\{e_n\}$ and $\{E_n\}$ be as in the proof  \ref{LLcomparisonU} of (see \eqref{48-eq1202-1}). 
Note that $E_kP=PE_k$ for all $k.$
As in the proof of \ref{LLcomparisonU}, if $0<\eta<\eta_0/16,$  then
\beq
\lim_{k\to\infty}\|(E_k^{1/2}aE_k^{1/2}+(1-E_k)^{1/2}P(1-E_k)^{1/2}-a\|=0\andeqn\\\label{Lomega-9}
\lim_{k\to\infty}\|(E_k^{1/2}f_{\eta}(a)E_k^{1/2}+(1-E_k)^{1/2}P(1-E_k)^{1/2}-f_{\eta}(a)\|=0.
\eneq
%
Note $\pi_\C^B((1-E_k)^{1/2}P(1-E_k)^{1/2})=p=\pi_\C^B(f_\eta(a)).$
Note also that, since $f_\eta(a)^{1/2}E_kf_\eta(a)^{1/2}\nearrow f_\eta(a)$ (in the strict topology),
$(E_k^{1/2}f_\eta(a)E_k^{1/2})^{\widehat{}}\nearrow  \widehat{f_\eta(a)}$ uniformly on $T(B)$ (by Dini's theorem). 
We may  therefore assume that, if $k\ge k_\eta$ (for some $k_\eta\ge 1$),
\beq\label{Lomega-20}
(E_k^{1/2}f_{\eta}(a)E_k^{1/2})^{\widehat{}}(\tau)>\widehat{f_{\eta}(a)}(\tau)-\sigma(\eta)/16 \rforal \tau\in T(B),
\eneq
 where $\sigma(\eta)=\min\{\inf \{\tau(f_{\eta}(a))-d_\tau(f_{4\eta}(a)):\tau\in T(B)\}, \eta/16\}>0$ 
 (recall $[0,\eta_0]\subset {\rm sp}(a)$).
 Moreover, since $M_r(B)$ has continuous scale, we may assume,  for all $k\ge k_\eta,$
 \beq\label{Lomega-20-1}
 {[1-E_{k+1}]}^{\widehat{}}\le (1-E_k)^{\widehat{}}<\sigma(\eta)/16.
 \eneq
 Put $a_{\eta, k}:=E_k^{1/2}f_\eta(a)E_k^{1/2}.$
 Choose $0<\dt(\eta)<\sigma(\eta)/16r.$  Then, by \eqref{Lomega-20}, for any $k\ge k_{\eta, 1}$
 for some $k_{\eta, 1}\ge k_\eta+1,$
 \beq\nonumber 
&&\hspace{-0.4in}\tau( f_{\dt(\eta)}(a_{\eta,k}))\ge \tau((a_{\eta,k}-\dt(\eta))_+)
>\tau(a_{\eta,k}-\dt(\eta))\\ \label{Lomega-21}
&&=\tau(a_{\eta,k})-r\dt(\eta)>
 \tau(f_{\eta}(a))-\sigma(\eta)/8\rforal \tau\in T(B).
 \eneq
By \eqref{Lomega-9}  and Proposition 2.2 of \cite{Rr2}, 
there is $k_{\eta,2}\ge k_{\eta,1}$ such that, for all $k\ge k_{\eta,2},$ 
there is 
$x_{\dt/8,\eta}\in {\rm Her}(f_{\eta}(a))$ such that 
\beq\label{newnumber411}
f_{\dt(\eta)/8}((1-E_k)^{1/2}P(1-E_k)^{1/2}+E_k^{1/2}f_{\eta}(a)E_k^{1/2})\sim x_{\dt/8,\eta}.
\eneq

 Since $B$ is stably projectionless,  for any  {{nonzero}} $0\le b\le 1$ in $M_r(B),$
 ${\rm sp}(b)=[0,1].$ Thus
 \beq\label{Lomega-8}
 d_\tau(f_{\dt(\eta)}(a_{\eta,k})))<\tau(f(a_{\eta,k}))<d_\tau(f_{\dt(\eta)/2}(a_{\eta, k}))\rforal \tau\in T(B),
 \eneq
 where $0\le f\le 1$ is in $C_0((0,1])$ such that
 $f(t)=1$ for $t\in [\dt(\eta)/2, 1],$  $ f(t)=0$ for $t\in (0, \dt(\eta)/4].$ 
 Since ${\rm Cu}(B)=\LAff_+(T(B)),$ there is 
  $c_{k,\eta,\dt(\eta)}\in M_r(B)_+$  with $\|c_{k, \eta, \dt(\eta)}\|\le 1$ 
 such that, for all $\tau\in T(B),$  $d_\tau(c_{n,\eta, \dt(\eta)})=\tau(f(a_{\eta,k}))$  which is continuous on $T(B).$ 
 Since $B$ has strict comparison, by \eqref{Lomega-8},  $c_{k, \eta, \dt(\eta)}\lesssim f_{\dt(\eta)/2}(a_{\eta, k}).$
 Since
  $M_r(B)$ has almost stable rank one, by  Lemma \ref{Pxxxx}, we may assume that 
 $c_{k, \eta, \dt(\eta)}\in {\rm Her}(f_{\dt(\eta)/2}(a_{\eta, k})).$
 
 Note that $(1-E_{k+1})E_k=0$ and $P(1-E_k)^{1/2}=(1-E_k)^{1/2}P$ for all $k.$ 
 In  particular, \\
 $P(1-E_{k+1})P=(1-E_{k+1})^{1/2}P(1-E_{k+1})^{1/2}\perp a_{\eta,k}.$
  By Lemma \ref{Lfolk},  there exists $z\in M_r(\td B)$ such that  
  (see also \eqref{newnumber411})
 \beq\nonumber
&&\hspace{-0.7in} f_{\dt(\eta)/2}((1-E_{k+1})^{1/2}P(1-E_{k+1})^{1/2})+c_{k, \eta, \dt(\eta)}\le  f_{\dt(\eta)/4}(P(1-E_{k+1})P) +f_{\dt(\eta)/4}(a_{\eta,k})\\
&&= f_{\dt(\eta)/4}(P(1-E_{k+1})P) +a_{\eta,k})
\sim  z^*z,\andeqn\\
&&z^*z\lesssim f_{\dt(\eta)/8}(P(1-E_k)P +a_{\eta,k})\\
&&= f_{\dt(\eta)/8}((1-E_k)^{1/2}P(1-E_k)^{1/2} +E_k^{1/2}f_{\eta}(a)E_k^{1/2})
\sim x_{\dt/8, \eta}\in {\rm Her}(f_\eta(a)).
 \eneq
 Define $b_{k, \eta, \dt(\eta)}:= f_{\dt(\eta)/2}((1-E_{k+1})^{1/2}P(1-E_{k+1})^{1/2})+c_{k, \eta, \dt(\eta)}$ for $k\ge k_{\eta, 2}.$
 From the displays above, there is $y_{k,\eta, \dt(\eta)}\in {\rm Her}(f_\eta(a))$ such that $b_{k, \eta, \dt}\sim y_{k, \eta, \dt(\eta)}.$
 By \eqref{Lomega-8} and \eqref{Lomega-21},  we have, for $k\ge k_{\eta,2},$ and for all $\tau\in  T(B),$ 
 \beq
&&\hspace{-0.4in}d_\tau(y_{k, \eta, \dt(\eta)})=d_\tau(b_{k, \eta, \dt(\eta)})>d_\tau( c_{k, \eta, \dt(\eta)})\\\label{Lomega-22}
&&\ge {[f_{\dt(\eta)}(E_k^{1/2}f_\eta(a)E_k^{1/2})]}^{\widehat{}}(\tau)\ge  \tau(f_{\dt(\eta)}(a_{\eta,k}))>
 \tau(f_{\eta}(a))-\sigma(\eta)/8.
 \eneq
 Since ${[c_{k,\eta, \dt(\eta)}]}^{\widehat{}}$ is continuous on $T(B),$ for $k\ge k_{\eta,\dt},$ 
 by \eqref{Lomega-20-1}\\ (recall $f_{\dt(\eta)/2}((1-E_{k+1})^{1/2}P(1-E_{k+1})^{1/2})\perp c_{k, \eta, \dt(\eta)}$),
 \beq\label{Lomega-23}
 \omega([y_{k, \eta, \dt(\eta)}])<\sigma(\eta)/16\le \eta/32.
 \eneq
 Combing (recall the definition of $\sigma(\eta)$) \eqref{Lomega-23} and \eqref{Lomega-22}, for all $\tau\in T(B),$
 \beq
 &&\hspace{-0.8in}d_\tau(f_{8\eta}(a)) +4\omega([y_{k, \eta, \dt(\eta)}])
 \le \tau(f_{4\eta}(a))+4\omega([y_{k, \eta, \dt(\eta)}])
 <\tau(f_\eta(a))-\sigma(\eta)+4\omega([y_{k, \eta, \dt(\eta)}])\\
 &&<\tau(f_\eta(a))-5\sigma(\eta)/16<d_\tau(y_{k, \eta, \dt(\eta)}).
 \eneq
 We also have 
 $
 [\pi_\C^B(f_{2\eta}(a))]\le [\pi_\C^B(f_\eta(a))]=p=[\pi_\C^B((1-E_{k_{\eta,2}})P)]=
 [\pi_\C^B(y_{k_{\eta,2}, \eta, \dt(\eta)})].
 $
 By Lemma \ref{LLcomparisonU},  
 \beq\label{Lomega-40}
 f_{8\eta}(a)\lesssim y_{k_{\eta,2}, \eta, \dt(\eta)}.
 \eneq
 For each fixed $0<\eta<1/8,$ there exists $\mu_\eta>0$ 
 such that (recall $[0, \eta_0]\subset {\rm sp}(a)$)
 \beq\label{Lomega-24}
 \tau(f_{\eta/4}(a))>d_\tau(f_\eta(a))+\mu_\eta\rforal \tau\in T(B).
 \eneq
 Choose $0<\eta'<\eta/16$ such that $\eta'<\mu_\eta/4.$ 
 Then, for $k\ge k_{\eta',  2},$ and for all $\tau\in T(B),$  by \eqref{Lomega-22}, \eqref{Lomega-24}, 
 (and recall the definition of $\sigma(\eta')$ and $y_{k_{\eta, 2}, \eta, \dt(\eta)}\in {\rm Her}(f_{\eta}(a))$), and \eqref{Lomega-23},
 \beq
&&\hspace{-0.6in}{[y_{k_{\eta', 2}, \eta', \dt(\eta')}]}^{\widehat{}}(\tau)\ge \tau(f_{\eta'}(a))-\sigma(\eta')/8\ge d_\tau(f_{\eta}(a))+\mu_\eta/2
\ge {[y_{k_{\eta, 2}, \eta, \dt(\eta)}]}^{\widehat{}}(\tau)+\mu_\eta/2\\
&&>{[y_{k_{\eta, 2}, \eta, \dt(\eta)}]}^{\widehat{}}(\tau)+\eta'
\ge {[y_{k_{\eta, 2}, \eta, \dt(\eta)}]}^{\widehat{}}(\tau)+4\omega([y_{k_{\eta',2}, \eta', \dt(\eta')}])\rforal \tau\in T(B).
 \eneq
 Recall $\pi_\C^B(f_{\dt(\eta)}((1-E_k)^{1/2}P(1-E_k)^{1/2}))=p$ for all $k$ and 
 for all $\dt(\eta)<1/2.$
 It follows from Lemma \ref{LLcomparisonU} (or from Lemma A.3 of \cite{eglnkk0})
 \beq
 f_{8\eta}(a)\lesssim y_{k_{\eta, 2}, \eta, \dt(\eta)}\lesssim y_{k_{\eta', 2}, \eta', \dt(\eta')}.
 \eneq
Thus, we obtain a sequence $\{c_n\}$ which is a subsequence of $\{ y_{k_{\eta, 2}, \eta, \dt(\eta)}\}\in {\rm Her}(a)$
(with $\eta\to 0$)
such that 
\beq
[c_n]\le [c_{n+1}]\andeqn \lim_{n\to\infty}\omega([c_n])=0\,{\rm(see} \,\,\eqref{Lomega-23}{\rm).}
\eneq
Put $x=\sup\{[c_n]: n\in \N\}$ (see Theorem 1 of \cite{CEI}). Then, by \eqref{Lomega-40},
$[f_{8\eta}(a)]\le x$ for all $0<\eta<\min\{\eta_0, 1/16\}.$   It follows that $[a]\le x.$
 Since each $c_n\in {\rm Her}(a),$
 $x\le [a].$  It follows that $x=[a].$
 \end{proof}

%

%

 In the following statement, it should be noted that
 we  do not assume that $\td B$ has almost stable rank one.  
 One of the features of the following statement is the existence of the unitaries  $U_n$ 
 which compensates  the absence of the cancellation for our late purposes.

   \begin{thm}\label{LcomparisonU}
 Let $B$ be as in \ref{151} and 
 $a, b\in M_r(\td B)_+$ (where $r\ge 1$ is an integer). 
 Suppose that $\pi_\C^B(a)\lesssim \pi_\C^B(b),$ and 
 \beq
d_\tau(a) <d_\tau(b)\rforal \tau\in T(B).
 \eneq
 Then, for any $1>\eta>0,$ there exists a sequence of unitaries 
 $U_n\in M_{2r}(\td B)$  and a sequence of elements $h_n\in {\rm Her}(b)_+$
 such that
 \beq
 \lim_{n\to\infty}\|U_n^*f_\eta(a)U_n-h_n\|=0.
 \eneq
  \end{thm}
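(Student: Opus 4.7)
The plan is to reduce to Lemma \ref{LLcomparisonU} by replacing $b$ with a carefully chosen $b_n \in {\rm Her}(b)$ from Lemma \ref{1202-1} and $a$ with $a^\sharp := (a - \eta/8)_+$. The main obstacle is that Lemma \ref{LLcomparisonU} demands the uniform hypothesis $d_\tau(A) + 4\omega([B]) < d_\tau(B)$, whereas the theorem provides only the pointwise strict inequality $d_\tau(a) < d_\tau(b)$; bridging this gap, via lower semicontinuity and compactness of $T(B)$, is the technical heart of the argument.

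First, apply Lemma \ref{1202-1} to $b$ to obtain $b_n \in {\rm Her}(b)_+$ with $[b_n] \le [b_{n+1}]$, $\sup_n [b_n] = [b]$, and $\omega([b_n]) \to 0$. Since $[\pi_\C^B(b_n)]$ is a monotone increasing sequence in $K_0(M_r) \cong \mathbb{Z}_{\ge 0}$ with supremum $[\pi_\C^B(b)]$, integer-valuedness forces $[\pi_\C^B(b_n)] = [\pi_\C^B(b)]$ for large $n$; hence the scalar-part hypothesis $\pi_\C^B(a^\sharp) \lesssim \pi_\C^B(b_n)$ is met eventually.

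Next, extract a uniform gap via a continuous upper bound. The function $\tau \mapsto \tau(f_{\eta/8}(a))$ is continuous on $T(B)$ (as $f_{\eta/8}(a)$ is a bounded element of $M_r(\td B)$) and satisfies $\tau(f_{\eta/8}(a)) \le d_\tau(a) < d_\tau(b)$, so $\tau \mapsto d_\tau(b) - \tau(f_{\eta/8}(a))$ is lower semicontinuous and strictly positive on the compact set $T(B)$; hence $\gamma := \inf_\tau (d_\tau(b) - \tau(f_{\eta/8}(a))) > 0$. To propagate this gap from $b$ to $b_n$, I use a finite open cover: at each $\tau_0 \in T(B)$, pointwise convergence $d_{\tau_0}(b_n) \nearrow d_{\tau_0}(b)$ produces $n(\tau_0)$ with $d_{\tau_0}(b_{n(\tau_0)}) > \tau_0(f_{\eta/8}(a)) + \gamma/2$; lower semicontinuity of $\tau \mapsto d_\tau(b_{n(\tau_0)})$ and continuity of $\tau \mapsto \tau(f_{\eta/8}(a))$ extend this inequality (with slack $\gamma/4$) to an open neighborhood $V(\tau_0)$; compactness yields a finite subcover with indices $n_1, \ldots, n_k$; setting $N := \max_i n_i$ and using the monotonicity $d_\tau(b_n) \le d_\tau(b_{n+1})$ inherited from $[b_n] \le [b_{n+1}]$, we get $d_\tau(b_N) > \tau(f_{\eta/8}(a)) + \gamma/4$ uniformly on $T(B)$. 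Passing to still larger $n$ so that also $4\omega([b_n]) < \gamma/8$, and noting $d_\tau(a^\sharp) \le \tau(f_{\eta/8}(a))$ (since $f_{\eta/8}(a)$ dominates the support projection of $a^\sharp$), we arrive at $d_\tau(a^\sharp) + 4\omega([b_n]) < d_\tau(b_n)$ for every $\tau \in T(B)$.

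Having verified the hypothesis of Lemma \ref{LLcomparisonU} for $a^\sharp$ and $b_n$, applying that lemma with parameter $\eta/2$ produces unitaries $V_m \in M_{2r}(\td B)$ and $h_m \in {\rm Her}(b_n)_+ \subseteq {\rm Her}(b)_+$ satisfying $\|V_m^* f_{\eta/2}(a^\sharp) V_m - h_m\| \to 0$. A direct pointwise check on the spectrum of $a$ shows $f_\eta(a) \le f_{\eta/2}(a^\sharp)$, and continuous functional calculus yields $k \in C_b([0, \infty))$ with $0 \le k \le 1$, $k \equiv 0$ on $[0, \eta/2]$, and $f_\eta(a) = f_{\eta/2}(a^\sharp)^{1/2} k(a) f_{\eta/2}(a^\sharp)^{1/2}$ (the naive quotient $f_\eta(t)/f_{\eta/2}((t - \eta/8)_+)$ appears discontinuous at $t = \eta/2$, but this is harmless since both factors vanish on $[0, \eta/2]$). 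Setting $h_m' := h_m^{1/2} V_m^* k(a) V_m h_m^{1/2} \in {\rm Her}(h_m) \subseteq {\rm Her}(b)$ and combining $\|(V_m^* f_{\eta/2}(a^\sharp) V_m)^{1/2} - h_m^{1/2}\| \to 0$ (via the standard square-root inequality for nonnegative contractions) with the triangle inequality, one obtains $\|V_m^* f_\eta(a) V_m - h_m'\| \to 0$; taking $U_m := V_m$ completes the proof.
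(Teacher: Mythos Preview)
Your proof is correct and follows the same overall strategy as the paper: obtain $b_n\in{\rm Her}(b)$ from Lemma~\ref{1202-1} with $\omega([b_n])\to 0$, manufacture a uniform trace gap, and invoke Lemma~\ref{LLcomparisonU}. The differences are organizational rather than substantive. The paper splits into two cases according to whether $[a]$ is compact: in the compact case $d_\tau(a)$ is already continuous so the uniform gap is immediate; in the non-compact case the paper replaces $a$ by a cutoff $f_{\ep/2}(a)$ (with $\ep<\eta/4$) and appeals to Lemma~\ref{Lorthog}. You instead handle both cases at once by passing to $a^\sharp=(a-\eta/8)_+$ and using the continuous majorant $\tau\mapsto\tau(f_{\eta/8}(a))$; your explicit open-cover argument to transfer the gap from $b$ to $b_n$, and your sandwich step via $k(a)$ to recover $f_\eta(a)$ from $f_{\eta/2}(a^\sharp)$, make explicit what the paper leaves implicit.

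One minor wording issue: in your parenthetical about the continuity of $k$ you write that ``both factors vanish on $[0,\eta/2]$,'' but in fact only the numerator $f_\eta$ vanishes there; the denominator $f_{\eta/2}((t-\eta/8)_+)$ is strictly positive on $(3\eta/8,\eta/2]$ (indeed it equals $1/2$ at $t=\eta/2$). Continuity of $k$ at $\eta/2$ holds simply because the numerator is zero and the denominator is $1/2$ there, so the one-sided limits agree. This does not affect the validity of your argument.
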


 \begin{proof}
First consider the case that $[a]=[p]$ for some projection $p\in M_r(\td B).$ 
Then $d_\tau(a)=\tau(p)$ is continuous on $T(B).$ 
Put
\beq
\sigma:=(1/2)\inf\{d_\tau(b)-\tau(p):\tau\in T(B)\}>0.
\eneq
Since $\tau(f_{1/2^n}(b))\nearrow d_\tau(b),$ as $n\to\infty,$ there exists $n_0\ge 1$ such that,
for all $n\ge n_0,$ 
\beq
d_\tau(a)+\sigma<\tau(f_{1/2^n}(b))\rforal \tau\in T(B)\andeqn [\pi_\C^B(f_{1/2^n}(b))]=[\pi_\C^B(b)].
\eneq
By Lemma \ref{1202-1}, 
there exists a sequence of elements $b_n\in {\rm Her}(b)_+$ with $0\le b_n\le 1$  and an integer $N\ge 1$ 
such that, for all $n\ge N$ (as  $[f_{1/2^{n_0+1}}(b)]\ll  [b]$),
\beq
 f_{1/2^{n_0+1}}(b)\lesssim b_n  \andeqn \lim_{n\to\infty}\omega([b_n])=0.
\eneq
Thus,  there exists $n_1\ge N+n_0,$ for all $n\ge n_1$
\beq\label{LcomparisonU-50}
d_\tau(a)+4\omega([b_n])<d_\tau(b_n)\rforal \tau\in T(B)\andeqn [\pi_\C^B(a)]\le [\pi_\C^B(b_n)].
\eneq
Applying Lemma \ref{LLcomparisonU}, for any $\eta>0,$ there exist
a  sequence of unitaries $U_n\in M_{2r}(\td B)$ and a sequence of elements 
$h_n\in  {\rm Her}(b_{n_1})_+$ such that
\beq
\lim_{n\to\infty}\|U_n^*f_\eta(a)U_n-h_n\|=0.
\eneq
Note that $h_n\in {\rm Her}(b_{n_1})_+\subset {\rm Her}(b)_+.$

Next consider the case that $[a]$ cannot be represented by a projection. 
It follows that $0$ is not an isolated point. 

Fix $0<\eta<1.$ Choose $0<\ep<\eta/4,$ by Lemma 
\ref{Lorthog},  there exists $\sigma_0>0$ such that
\beq
d_\tau(f_{\ep/2}(a))+\sigma_0<d_\tau(f_{\eta/4}(a))<d_\tau(b)\rforal \tau\in T(B).
\eneq
Choose $b_n\in {\rm Her}(b)_+$   above.
Then, there exists $n_2\ge 1$ such that, for all $n\ge n_2,$ 
\beq\label{LcomparisonU-50+}
d_\tau(f_{\ep/2}(a))+4\omega([b_n])<d_\tau(b_n)\andeqn [\pi_\C^B(f_{\ep/2}(a))]\le [\pi_\C^B(b_n)].
\eneq
Applying Lemma \ref{LLcomparisonU}, one obtains a sequence of unitaries $U_n\in M_{2r}(\td B)$
and a sequence of elements $h_n\in {\rm Her}(b_{n_2})_+\subset {\rm Her}(b)_+$ such
that
\beq
\lim_{n\to\infty}\|U_n^*f_\eta(a)U_n-h_n\|=0.
\eneq
Theorem follows.
\end{proof}

We now arrive at the following theorem (see Theorem A.6 of \cite{eglnkk0}).

 \begin{thm}\label{TBtildC}
 Let $B$ be as in \ref{151}. Then, for any $a, b\in (\td B\otimes {\cal K})_+,$
 if   $[\pi_\C^B(a)]\le [\pi_\C^B(b)]$ and 
 \beq
 d_\tau(a)<d_\tau(b)\rforal \tau\in T(B),
 \eneq
 then 
 $
 a\lesssim b.
 $
 Moreover, if $[a]$ is not represented by a projection, then 
 $d_\tau(a)\le d_\tau(b)$ for all $\tau\in T(\td B)$ implies that $a\lesssim b.$
 
 \end{thm}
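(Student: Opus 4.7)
The strategy is to reduce the theorem directly to Theorem \ref{LcomparisonU} via a cutdown to matrix algebras and a standard $\varepsilon$-argument. To show $a \lesssim b$ it suffices to show $f_\varepsilon(a) \lesssim b$ for every $\varepsilon \in (0,1)$, so I may assume $a \in {\rm Her}(f_{\varepsilon/2}(a))$, which is contained in $M_r(\td B)$ for some large $r$; by enlarging if needed, both $a$ and a cutdown of $b$ may be viewed in $M_r(\td B)_+$ with $0\le a,b\le 1$.

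First I handle the main assertion. Since $d_\tau(a)<d_\tau(b)$ on the compact set $T(B)$ and $d_\tau(f_{1/n}(b))\nearrow d_\tau(b)$ pointwise, a standard compactness argument furnishes $\eta_0\in(0,\varepsilon/4)$ and $\delta\in(0,\eta_0)$ such that
\[
d_\tau(f_{\eta_0}(a))<d_\tau(f_\delta(b))\qquad\text{for all }\tau\in T(B),
\]
together with $[\pi_\C^B(f_{\eta_0}(a))]\le[\pi_\C^B(f_\delta(b))]$. Now apply Theorem \ref{LcomparisonU} with $a$ replaced by $f_{\eta_0}(a)$ and $b$ replaced by $f_\delta(b)$: for any fixed $\eta\in(0,\eta_0)$ I obtain unitaries $U_n\in M_{2r}(\td B)$ and elements $h_n\in{\rm Her}(f_\delta(b))_+\subset{\rm Her}(b)_+$ with
\[
\lim_{n\to\infty}\|U_n^*f_\eta(f_{\eta_0}(a))U_n-h_n\|=0.
\]
Fix $\sigma\in(0,1)$. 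For large $n$, $\|U_n^*f_\eta(f_{\eta_0}(a))U_n-h_n\|<\sigma$, so by Proposition 2.2 of \cite{Rr2}
\[
\bigl(U_n^*f_\eta(f_{\eta_0}(a))U_n-\sigma\bigr)_+\;\lesssim\; h_n\;\lesssim\; b.
\]
Since $(U_n^*xU_n-\sigma)_+=U_n^*(x-\sigma)_+U_n$ is unitarily equivalent to $(x-\sigma)_+$, we deduce $(f_\eta(f_{\eta_0}(a))-\sigma)_+\lesssim b$. Letting $\sigma\to 0$ gives $f_\eta(f_{\eta_0}(a))\lesssim b$, and then $\eta\to 0$ gives $f_{\eta_0}(a)\lesssim b$; finally letting $\eta_0\to 0$ (after reselecting $\delta$) yields $f_\varepsilon(a)\lesssim b$, and hence $a\lesssim b$.

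For the moreover clause, suppose $[a]$ is not represented by a projection. Then $0$ is not isolated in $\mathrm{sp}(a)$, so by Lemma \ref{Lorthog} (applied inside ${\rm Her}(a)$) one has, for every $\varepsilon>0$,
\[
d_\tau(f_\varepsilon(a))<d_\tau(a)\le d_\tau(b)\qquad\text{for all }\tau\in T(B),
\]
while $[\pi_\C^B(f_\varepsilon(a))]\le[\pi_\C^B(a)]\le[\pi_\C^B(b)]$. The hypotheses of the first part are therefore met by $f_\varepsilon(a)$ and $b$, yielding $f_\varepsilon(a)\lesssim b$ for every $\varepsilon>0$, and so $a\lesssim b$.

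The only subtle point is checking that Lemma \ref{Lorthog} indeed forces a strict drop $d_\tau(f_\varepsilon(a))<d_\tau(a)$ uniformly on the compact set $T(B)$; this uses that $M_r(B)$ is simple and $T(B)$ is compact, both in force by \ref{151}. Everything else is a routine unitary-approximation manipulation; the hard analytic content has already been absorbed into Theorem \ref{LcomparisonU}.
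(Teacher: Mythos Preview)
Your proof is correct and follows essentially the same route as the paper: reduce to $M_r(\td B)_+$, invoke Theorem~\ref{LcomparisonU} to get unitaries carrying $f_\eta(a)$ close to ${\rm Her}(b)$, apply Proposition~2.2 of \cite{Rr2}, and handle the ``moreover'' clause via Lemma~\ref{Lorthog}. The only difference is cosmetic: your additional cutdown of $b$ to $f_\delta(b)$ (with its accompanying compactness argument) is unnecessary, since once in $M_r(\td B)$ Theorem~\ref{LcomparisonU} applies directly to $b$ itself---which is exactly what the paper does.
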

 
 \begin{proof}
 For the first part, we note that,
 for any $\ep>0,$ there exists $\dt>0$ such 
 that 
 \beq
 d_\tau(f_\ep(a))<d_\tau(f_\dt(b))\rforal \tau\in T(B)\andeqn [\pi_\C^B(f_\ep(a))]\le [\pi_\C^B(f_\dt(b))].
 \eneq
 With this observation, we reduce the general case to the case 
 that $a, b\in M_r(\td B)_+$ with $0\le a, \, b\le 1.$   
 
 For this case, for any $0<\eta<1/2,$ by Lemma \ref{LcomparisonU}, there is $h\in {\rm Her}(b)_+$ 
and a unitary $U\in M_{2r}(\td B)$ such that
\beq
\|U^*f_{\eta/4}(a)U-h\|<\eta/8.
\eneq
By Proposition 2.2 of \cite{Rr2}, this implies that 
\beq
f_{\eta/4}(f_{\eta/4}(a))\sim U^*f_{\eta/4}((f_{\eta/4}(a))U=f_{\eta/4}(U^*f_{\eta/4}(a)U)\lesssim h\lesssim b.
\eneq
Since this holds for all $0<\eta<1/2,$  one has $a\lesssim b.$

 

Now suppose that
\beq
d_\tau(a)\le d_\tau(b)\rforal \tau\in T(\td B).
\eneq
If $[a]$ is not represented by a projection, then, by Lemma \ref{Lorthog}, 
for any $1>\ep>0,$ 
\beq
d_\tau(f_\ep(a))<d_\tau(b)\rforal \tau\in T(B)\andeqn [\pi_\C^B(f_\ep(a))]\le [\pi_\C^B(b)].
\eneq
By what has been proved above, 
$f_\ep(a)\lesssim b$ for all $1>\ep>0.$ Therefore $a\lesssim b.$

 \end{proof}
 
 Combining Theorem \ref{TBtildC} and Lemma \ref{TcomparisonintdA}, we have the following description 
 of the ${\rm Cu}(\td B).$  Note that all {{finite}} exact separable simple stably projectionless ${\cal Z}$-stable 
 \CA s satisfy the assumption of the corollary below.
 
 \begin{cor}\label{CtdBcomp}
 Let $B$ be a  separable stably projectionless simple \CA\, with continuous scale  such that $M_n(A)$ has almost stable rank one
 (for all $n\in \N$), and  such that $QT(B)=T(B)$ and ${\rm Cu}(B)=\LAff_+(T(B)).$ 
 Then, ${\rm Cu}(\td B)=(V(\td B)\setminus \{0\})\sqcup \LAff_+(T(\td B))^\diamond.$
  \end{cor}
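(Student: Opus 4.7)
The plan is to combine Lemma \ref{TcomparisonintdA} with the ``moreover'' clause of Theorem \ref{TBtildC} to upgrade the description of $\mathrm{Cu}(\widetilde{B})^{\circeq}$ to one of $\mathrm{Cu}(\widetilde{B})$ itself. I would begin by splitting $\mathrm{Cu}(\widetilde{B})\setminus\{0\}$ into its projection classes and its non-projection classes; since $\widetilde{B}$ is stably finite (by Proposition \ref{P1}, because $B$ is stably projectionless), these two families are disjoint and together exhaust $\mathrm{Cu}(\widetilde{B})\setminus\{0\}$.

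For the projection classes I would invoke the standard fact that, in a stably finite \CA, two projections $p,q\in\widetilde{B}\otimes\mathcal{K}$ are Cuntz equivalent if and only if they are Murray--von Neumann equivalent (using that $[p]\le[q]$ in $\mathrm{Cu}$ for projections produces a partial isometry, together with a Schr\"oder--Bernstein argument in the finite setting); this identifies the projection classes with $V(\widetilde{B})\setminus\{0\}$. For the non-projection classes I would define $\Phi([a]):=\widehat{[a]}=d_{\cdot}(a)$. Its image lies automatically in $\LAff_+(T(\widetilde{B}))^\diamond$ since $\widehat{[a]}(\tau_\C)=[\pi_\C^B(a)]\in\{0\}\cup\N\cup\{\infty\}$. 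Injectivity of $\Phi$ is exactly the moreover clause of Theorem \ref{TBtildC}: if $[a],[b]$ are both non-projection classes with $\widehat{[a]}=\widehat{[b]}$ on $T(\widetilde{B})$, then $a\lesssim b$ and $b\lesssim a$, so $[a]=[b]$. For surjectivity, given $f\in \LAff_+(T(\widetilde{B}))^\diamond$, Lemma \ref{TcomparisonintdA} supplies a $\circeq$-class sitting in the $\LAff^\diamond$-sector of $\mathrm{Cu}(\widetilde{B})^{\circeq}$; a representative of this $\circeq$-class must be a non-projection class (otherwise it would fall in the $K_0$-sector instead), and $\Phi$ of that class equals $f$.

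The mixed addition and order on $(V(\widetilde{B})\setminus\{0\})\sqcup\LAff_+(T(\widetilde{B}))^\diamond$ prescribed in Definition \ref{Dfep} then coincide with the induced structure on $\mathrm{Cu}(\widetilde{B})$ thanks to the strict comparison contained in Theorem \ref{TBtildC}. The step I expect to be the most delicate is the surjectivity of $\Phi$: one must verify that the $\circeq$-class supplied by Lemma \ref{TcomparisonintdA} genuinely admits a non-projection representative in $\mathrm{Cu}(\widetilde{B})$ and not merely in the quotient. This amounts to tracing through the construction in Theorem A.6 of \cite{eglnkk0} to confirm that the produced element is non-compact, particularly in the borderline situation where $f=\hat p$ for some projection $p\in\widetilde{B}\otimes\mathcal{K}$, and then using the injectivity above to separate the resulting non-projection Cuntz class from $[p]$.
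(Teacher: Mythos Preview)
Your proposal is correct and is precisely the argument the paper has in mind: the paper offers no proof beyond the sentence ``Combining Theorem \ref{TBtildC} and Lemma \ref{TcomparisonintdA}'', and your elaboration of how that combination works---splitting into projection versus non-projection classes, using stable finiteness for the $V(\widetilde B)$ part, and using the ``moreover'' clause of Theorem \ref{TBtildC} for injectivity on the $\LAff^\diamond$ part---is the intended one. The subtlety you isolate (that the $\circeq$-class produced by Lemma \ref{TcomparisonintdA} for a given $f\in\LAff_+(T(\widetilde B))^\diamond$ must contain a genuinely non-compact representative, even when $f=\hat p$) is real and is exactly what one must extract from Theorem A.6 of \cite{eglnkk0}; note that the elements produced there lie in the sub-semigroup $S(\widetilde B)$ of suprema of increasing sequences of non-projection classes, and the supremum of such a sequence cannot be compact.
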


 \begin{rem}\label{Rcomparison}
 If both $x$ and $y$ are not compact in ${\rm Cu}(\td B)$ and 
 $x\circeq y,$ or equivalently $x+k[1_{\td B}]=y+k[1_{\td B}]$ in ${\rm Cu}(\td B)$ for some integer $k,$ 
 then, by Theorem \ref{TBtildC}, $x=y.$ So ${\rm Cu}(\td B)$ has the weak version of cancellation. 
 However, we still do not have the cancellation for projections. In other words, if 
 $p\oplus e\sim q\oplus e$ for some nonzero projection $e,$ we do not know that $p\sim q.$ 
 Nevertheless, if $[p\oplus e]+x\le [q\oplus e]$ for some $x\in {\rm Cu}(\td A)_+\setminus \{0\},$ then $[p]\le [q],$ by Theorem \ref{TBtildC}.
 
 \end{rem}

\section{Approximation}

In this section we will present Lemma \ref{Llimintmaps} (see also the last part of Remark \ref{Rcountexample}).

 \begin{df}\label{DappCu}
 Let $A$ and $B$ be \CA s and $\lambda: {\rm Cu}^\sim(A)\to {\rm Cu}^\sim (B)$ be a morphism in ${\bf Cu}$
 (see \cite{Rl}).
 Suppose that $\phi_n: A\to B$ is a sequence of \hm s.
 We say ${\rm Cu}(\phi_n)$ converges to $\lambda$ and 
 write $\lim_{n\to\infty} {\rm Cu}(\phi_n)=\lambda,$ if, for any finite subset $G\subset {\rm Cu}^\sim(A),$ 
 there exists $N\ge 1$ such that, for all $n\ge N,$ 
 \beq
 {\rm Cu}^\sim(\phi_n)(x)\le \lambda(y)\andeqn \lambda(x)\le {\rm Cu}^\sim(\phi_n)(y),
 \eneq
 whenever $x, y\in G$ and $x\ll y.$
 
 Let $G_0\subset  K_0(A)\subset {\rm Cu}^\sim(A)$ (see 6.1 of \cite{RS}) be a finite subset.  Then $\lim_{n\to\infty}{\rm Cu}^\sim(\phi_n)=\lambda$ 
 implies that, there is an integer $n_0\ge 1$ such that, for all $n\ge n_0,$
 \beq
 {\rm Cu}^\sim(\phi_n)(x)=\lambda(x)\rforal x\in G_0
 \eneq
 as $x\ll x$  in ${\rm Cu}^\sim(A).$
 
 We write $\lim_{n\to \infty}^w{\rm Cu}^\sim(\phi_n)=\lambda,$  if 
 for any finite subset $G\subset {\rm Cu}^\sim(A),$ there exists $n_0\ge 1$ such that, for all $n\ge n_0,$ 
 \beq
  {\rm Cu}^\sim(\phi_n)(z)=\lambda(z)\rforal z\in G\cap K_0(A)\andeqn\\
 {\rm Cu}^\sim(\phi_n)(x)\le \lambda(y)\andeqn \lambda(x)\le {\rm Cu}^\sim(\phi_n)(y),
 \eneq
 whenever $x, y\in G$ and $x\ll y$ and both $x$ and $y$ are not compact. 
\end{df}

 

\begin{lem}\label{LappCum}
Let $C$ be a separable \CA\, of stable rank one
and $B$ be 
a \CA\, with finite  stable rank.
Suppose that $\lambda: {\rm Cu}^\sim(C)\to {\rm Cu}^\sim(B)$ is a morphism in ${\bf Cu}$ and 
there exists  a sequence of \hm s $\phi_n: C\to B$ such 
that 
\beq
\lim_{n\to\infty}{\rm Cu}^\sim(\phi_n)=\lambda.
\eneq
Suppose that 
 $\psi_n: C\to B$  is a sequence of \hm s  such that 
\beq\label{LappCum-1}
\lim_{n\to\infty}\|\psi_n(a)-\phi_n(a)\|=0\tforal a\in C.
\eneq
Then 
\beq
\lim_{n\to\infty}{\rm Cu}^\sim(\psi_n)=\lambda.
\eneq
\end{lem}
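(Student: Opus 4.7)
The plan is to transfer the convergence ${\rm Cu}^\sim(\phi_n) \to \lambda$ to ${\rm Cu}^\sim(\psi_n) \to \lambda$ by comparing Cuntz classes via Rørdam's perturbation estimate (Proposition 2.2 of \cite{Rr2}). The key technical claim I would establish first is the following: for any $x \ll z$ in ${\rm Cu}^\sim(C)$ (with $x, z$ not compact), one has $\psi_n(x) \le \phi_n(z)$ and $\phi_n(x) \le \psi_n(z)$ for all sufficiently large $n$. To see this, represent $x=[a]$ and $z=[a']$ for positive elements $a,a'\in C\otimes{\cal K}$; since $x\ll z$, there is $\epsilon>0$ with $a\lesssim (a'-\epsilon)_+$. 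Applying $\psi_n$ yields $\psi_n(a)\lesssim (\psi_n(a')-\epsilon)_+$. The hypothesis $\|\psi_n(a')-\phi_n(a')\|\to 0$ combined with Rørdam's lemma gives $(\psi_n(a')-\epsilon)_+\lesssim \phi_n(a')$ whenever the norm gap is less than $\epsilon$. Chaining these, $\psi_n(a)\lesssim \phi_n(a')$ as desired; the reverse inequality is symmetric.

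For compact elements of ${\rm Cu}^\sim(C)$, which come from $K_0(C)$ and are represented by formal differences of projections $[p]-[q]$, I would use the standard fact that two projections at distance less than $1$ are Murray--von Neumann equivalent. Together with $\|\psi_n(p)-\phi_n(p)\|\to 0$ (and similarly for $q$), this forces $[\psi_n(p)]=[\phi_n(p)]$ and $[\psi_n(q)]=[\phi_n(q)]$ in $K_0(B)$ for large $n$, so that ${\rm Cu}^\sim(\psi_n)(x)={\rm Cu}^\sim(\phi_n)(x)=\lambda(x)$ eventually. The separability of $C$ and stable rank one of $C$ are what allow us to choose genuine projection representatives.

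To assemble the proof, I would fix a finite set $G\subset{\rm Cu}^\sim(C)$ and, for each pair $x\ll y$ in $G$, use the interpolation axiom of the category ${\bf Cu}$ to insert elements $x\ll z_1\ll z_2\ll y$. After enlarging $G$ to include all such $z_1, z_2$, the hypothesis $\lim_n{\rm Cu}^\sim(\phi_n)=\lambda$ applied to the enlarged set yields, for $n$ large, the inequalities $\phi_n(z_1)\le\lambda(z_2)$ and $\lambda(z_1)\le\phi_n(z_2)$. Combined with the technical claim above in the form $\psi_n(x)\le\phi_n(z_1)$ and $\phi_n(z_2)\le\psi_n(y)$, one obtains the chain
\[
\psi_n(x)\le\phi_n(z_1)\le\lambda(z_2)\le\lambda(y)
\qquad\text{and}\qquad
\lambda(x)\le\lambda(z_1)\le\phi_n(z_2)\le\psi_n(y),
\]
which is exactly the condition defining $\lim_n{\rm Cu}^\sim(\psi_n)=\lambda$. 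Pairs involving a compact element are handled by the nearby-projection step.

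The main obstacle I anticipate is the bookkeeping needed for elements of ${\rm Cu}^\sim(C)$ which are formal differences rather than honest Cuntz classes: each such element must be split into a part coming from $(C\otimes{\cal K})_+$ and a compact correction, and the two parts must be tracked separately through the inequality chain. The stable rank one hypothesis on $C$ is what permits a clean representation and ensures cancellation at the level of projections, while finite stable rank of $B$ controls the image side via the ${\rm Cu}^\sim$ machinery of \cite{RS}. Once these representations are fixed, the argument is essentially a soft perturbation computation.
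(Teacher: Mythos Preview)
Your approach---interpolate in ${\rm Cu}^\sim(C)$ and use R\o rdam's Proposition~2.2 of \cite{Rr2} to pass between $\phi_n$ and $\psi_n$---is exactly the paper's. Two cosmetic differences: the paper inserts a single intermediate element $h$ with $f\ll h\ll g$ (and then uses $f_\dt(\bar h)$ to gain the extra room) rather than two, and it does not treat compact elements separately; the uniform argument already covers them.

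The genuine gap is the bookkeeping you flag at the end. Your representation ``$x=[a]$ with $a\in C\otimes{\cal K}$'' is wrong for general elements of ${\rm Cu}^\sim(C)$, and your proposed fix---split each element into a part from $(C\otimes{\cal K})_+$ and a compact correction and track them separately---does not work, because the relation $f\ll g$ in ${\rm Cu}^\sim(C)$ does not decompose componentwise. The paper's device is to write $f=[a^f]-m_f[1_{\td C}]$ and $g=[a^g]-m_g[1_{\td C}]$ with $a^f,a^g\in M_r(\td C)_+$, and then bundle the pieces together as $\bar f=\diag(a^f,1_{m_g})$ and $\bar g=\diag(a^g,1_{m_f})$. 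The relation $f\ll g$ in ${\rm Cu}^\sim(C)$ becomes $[\bar f]\ll[\bar g]$ in ${\rm Cu}(\td C)$, where your R\o rdam argument applies directly to the unital extensions $\phi_n^\sim,\psi_n^\sim: M_r(\td C)\to M_r(\td B)$ (which inherit the norm convergence since they agree on scalars). The finite stable rank of $B$ enters only at the very end, to translate the resulting inequality in ${\rm Cu}(\td B)$ back to ${\rm Cu}^\sim(B)$.
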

  
 \begin{proof}
 Let $G\subset {\rm Cu}^\sim(C)$ be a finite subset. 
 Let $S=\{(f, g): f, g\in G, f\ll g\}.$ 
 
 Suppose that $(f,g)\in S.$ 
 We claim, in this case, that
 there is $h\in {\rm Cu}^\sim(C)$ such 
 that
 \beq\label{59}
 f\ll h \ll g.
 \eneq
 Recall that $C$ has stable rank one.
 We may assume that $f=[a^f]-m_f[1_{\td C}]$ 
 and $g=[a^g]-m_g[1_{\td C}],$ where $a^f, a^g\in M_r((\td C))_+$ 
 with $\|a^f\|\le 1$ and $\|a^g\|\le 1$ for 
 some integer $r\ge 1,$ and, rank of $\pi_\C^{C}(a^f)$ is $m_f\le r,$
 and rank of $\pi_\C^{C}(a^g)$ is $m_g\le r.$    
 Therefore
 \beq\label{Lunithm-10}
[a^f\oplus 1_{m_g}]\ll [a^g \oplus 1_{m_f}]
\eneq
(in the ${\rm Cu}(\td C)$),
where  $1_{m_f}$ and $1_{m_g}$ are identities of $M_{m_f}(\td C)$ and $M_{m_g}(\td C)$ respectively. 

By  \eqref{Lunithm-10},  there is $1/2>\ep>0$ such that 
\beq
[a^f\oplus 1_{m_g}]\ll  [f_{\ep}(a^g)\oplus 1_{m_f}]\andeqn
[f_{\ep}(a^g)]\ll [a^g].
\eneq
 Moreover, by choosing smaller $\ep,$ 
 we may assume that $\pi_\C^{C}(f_\ep(a^g))=f_{\ep}(\pi_\C^{C}(a^g))$ has the same rank as 
 that of $[\pi_\C^{ C}(a^g)]=m_g.$   Put $a^h=f_\ep(a^g)$ and $h=[a^h]-m_g[1_{\td C}].$
 Then 
 \beq\label{512}
 f\ll h\ll g.
 \eneq
 
 Define ${\bar f}=\diag(a^f, 1_{m_g}),$ ${\bar h}=\diag(a^h, 1_{m_f})$ and 
 ${\bar g}=\diag(a^g, 1_{m_f}).$ Note that, in ${\rm Cu}(\td {{C}}),$
 \beq
 {\bar f}\ll {\bar h}\ll {\bar g}.
 \eneq
 Choose $0<\dt<\ep/4$ such that 
 \beq\label{LappCum-5}
 {\bar f}\le f_{\dt}({\bar h})\le {\bar h}\andeqn {\bar h}\le f_\dt({\bar g})\le {\bar g}.
 \eneq

 Let $\phi_n^\sim, \psi_n^\sim: M_r(\td C)\to M_r(\td B)$ be the (unital) extensions of $\phi_n$ and $\psi_n,$
 respectively.
 We claim that, for each $(f,g)\in S,$ there is an integer $N\ge 1$ such that, for all $n\ge N,$
 \beq
 {\rm Cu}^\sim(\psi_n)(f)\le \lambda(g)\andeqn \lambda(f)\le {\rm Cu}^\sim (\psi_n).
 \eneq
 
 Write $\lambda(f)=\lambda(f)_+-m_{\lambda,f}[1_{\td B}]$ and 
 $\lambda(g)=\lambda(g)_+-m_{\lambda, g}[1_{\td B}],$ where $\lambda(f)_+=[a_{\lambda,f}]$
 and $\lambda(g)_+=[a_{\lambda,g}]$ 
 for some $a_{\lambda, f}, a_{\lambda,g}\in M_r(\td B)_+$ (by enlarge $r$ if necessary),
 and $[\pi_\C^B(a_{\lambda, f})]=m_{\lambda, f}$ and $[\pi_\C^B(a_{\lambda, g})]=m_{\lambda, g}$ are 
 integers.

 Note, by \eqref{LappCum-1}, we have
 \beq
 \lim_{n\to\infty}\|\psi_n^\sim(c)-\phi_n^\sim(c)\|=0\rforal c\in M_r(\td C).
 \eneq
Then, by \eqref{LappCum-5} and by repeated application of Proposition 2.2 of \cite{Rr2}, there exists 
an integer $N\ge 1$ such that, if ${{n}}\ge N,$
\beq\label{LappCum-11}
\psi_n^\sim({\bar f})\lesssim \psi_n^\sim(f_{\dt}({\bar h}))\lesssim \phi_n^\sim({\bar h})\andeqn
 \phi_n^\sim ({\bar h})\lesssim \phi_n^\sim (f_{\dt}({\bar g}))\lesssim \psi_n^\sim({\bar g}).
 \eneq
 Assume that $B$ has stable rank $K.$
 Since $\lim_{n\to\infty}{\rm Cu}^\sim (\phi_n)=\lambda,$ we may also assume, if $n\ge N,$
 \beq\label{LappCum-12}
&&{[}\phi_n^\sim(a^h){]} +(m_{\lambda, g}+K)[1_{\td B}]\le \lambda(g)_++(m_g+K)[1_{\td B}]\andeqn\\\label{LappCum-12+}
&&\lambda(f)_++(m_g+K)[1_{\td B}]\le [\phi_n^\sim(a^h)]+(m_{\lambda, f}+K)[1_{\td B}]
%
\eneq
for all $(f, g)\in S.$  Combining \eqref{LappCum-11}, \eqref{LappCum-12} and \eqref{LappCum-12+}, we obtain
\beq\nonumber
&&\hspace{-0.3in}[\psi_n^\sim(a^f)]+(m_g+m_{\lambda, g}+K)[1_{\td B}]=[\psi_n^\sim({\bar f})]+(m_{\lambda, g}+K)[1_{\td B}]\le
 [\psi_n^\sim(f_{\dt}({\bar h})]+(m_{\lambda, g}+K)[1_{\td B}]\\\nonumber
 &&\le {[}\phi_n^\sim(a^h){]}
+(m_f+m_{\lambda,g}+K)[1_{\td B}]\le \lambda(g)_++(m_g+K)[1_{\td B}]+m_f[1_{\td B}]\andeqn\\\nonumber
&&\hspace{-0.3in}\lambda(f)_+ +(m_f+m_g+K)[1_{\td B}]\le [\phi_n^\sim(a^h)]+(m_{\lambda,f}+m_f+K)[1_{\td B}]=[\phi_n^\sim({\bar h})]+(m_{\lambda,f}+K)[1_{\td B}]\\\nonumber
&&\le [\psi_n^\sim({\bar g})]+(m_{\lambda, f}+K)[1_{\td B}]
=[\psi_n^\sim(a^g)]+(m_f+m_{\lambda, f}+K)[1_{\td B}].
\eneq
Thus, for all $n\ge N,$  and, for all $(f,g)\in S,$ 
\beq\nonumber
{\rm Cu}^\sim(\psi_n)(f)\le \lambda(g)\andeqn \lambda(f)\le {\rm Cu}^\sim(\psi_n)(g).
\eneq
%
 \end{proof}

 \begin{lem}\label{Llimintmaps}
 Let $C$ be a separable semiprojective \CA\,
 with a strictly positive element $e_C$ and  $B$ be as in \ref{151}.
 
 (1) Let  
 $\lambda: {\rm Cu}^\sim(C)\to {\rm Cu}^\sim(\td B)$
 be  a morphism in ${\bf Cu}$ with $\lambda([e_C])\le [b]$  for some $b\in M_N(\td B)_+$ (and $N\ge 1$), 
  and let $\phi_k: C\to M_N(\td B)$
 be a sequence of \hm s such that $\lim_{k\to\infty}{\rm Cu}^\sim(\phi_k)=\lambda,$
 then 
 exists a sequence of \hm s $\psi_k: C\to \overline{bM_N(\td B)b}$
  such 
 that
 \beq
 \lim_{k\to\infty}{\rm Cu}^\sim(\psi_k)=\lambda,
 \eneq
if, in  addition, 

(i) $\lambda([e_C])^{\widehat{}}(\tau)< \widehat{[b]}(\tau)$ for all $\tau\in T(B),$  or
 
(ii) $\lambda([e_C])$ is not a compact element in ${\rm Cu}^\sim(\td B).$ 

(2) If $\lambda: {\rm Cu}^\sim(C)\to {\rm Cu}^\sim(B)$ is a morphism in ${\bf Cu},$ $\lambda([e_C])\le [b]$
for some $b\in M_N(B)_+,$ and there exists a sequence of \hm s 
$\phi_k: C
\to M_N(B)$ such that 
$\lim_{k\to\infty}{\rm Cu}^\sim(\phi_k)=\lambda,$ then 
there exists a sequence of \hm s $\psi_k: C\to \overline{bM_N(B)b}$ such that 
 \beq
 \lim_{n\to\infty}{\rm Cu}^\sim(\psi_n)=\lambda.
 \eneq

 \end{lem}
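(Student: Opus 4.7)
\medskip

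\noindent\textbf{Proof plan.} The strategy is to modify each $\phi_k$ by conjugation with a unitary so that its image lands (approximately) in $\overline{bM_N(\td B)b}$, then pass to an honest homomorphism using the semiprojectivity of $C$, and finally invoke Lemma \ref{LappCum} to preserve the ${\rm Cu}^\sim$-limit. Fix a finite subset $F\subset C$ and $\gamma>0$; since $e_C$ is strictly positive in $C$, pick $\epsilon>0$ small enough that $\|f_\epsilon(e_C)\,c\,f_\epsilon(e_C)-c\|<\gamma$ for all $c\in F$. Because $[f_\epsilon(e_C)]\ll [e_C]$ in ${\rm Cu}^\sim(C)$, the hypothesis $\lim_k{\rm Cu}^\sim(\phi_k)=\lambda$ gives, for all large $k$,
\[
[\phi_k(f_\epsilon(e_C))]\le \lambda([e_C])\le [b]\qquad\text{in ${\rm Cu}^\sim(\td B)$.}
\]
Since $\td B$ has stable rank at most $2$, this inequality lifts (by Corollary 4.10 of \cite{RS}) to $d_\tau(\phi_k(f_\epsilon(e_C)))\le d_\tau(b)$ for all $\tau\in T(\td B)$, together with $[\pi_\C^B(\phi_k(f_\epsilon(e_C)))]\le [\pi_\C^B(b)]$. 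Under hypothesis (i) the trace inequality is in fact strict on the compact set $T(B)$ (with uniform slack, using condition (i) on $\lambda$), so Theorem \ref{TBtildC} yields $\phi_k(f_\epsilon(e_C))\lesssim b$. Under hypothesis (ii), $\lambda([e_C])$ is not represented by a projection; I would approximate $b$ by the sequence $b_n\in{\rm Her}(b)$ supplied by Lemma \ref{1202-1} (with $\omega([b_n])\to 0$ and $[b_n]\nearrow [b]$), pick a smaller $\epsilon'<\epsilon$ so that Lemma \ref{Lorthog} produces an orthogonal slack $\sigma_0>0$ witnessing $d_\tau(\phi_k(f_\epsilon(e_C)))<d_\tau(f_{\epsilon'/2}(\lambda([e_C])))+\text{slack}<d_\tau(b_n)$, and then apply Lemma \ref{LLcomparisonU} to conclude $\phi_k(f_\epsilon(e_C))\lesssim b_n\le b$. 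Part (2) is simpler: for $B$ we have ${\rm Cu}(B)=\LAff_+(T(B))$, so the trace inequality alone suffices to give $\phi_k(f_\epsilon(e_C))\lesssim b$ in ${\rm Cu}(B)$.

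Having $\phi_k(f_\epsilon(e_C))\lesssim b$ in hand, I would next move $\phi_k$ into $\overline{bM_N(\td B)b}$ by conjugation. For part (1), Theorem \ref{LcomparisonU} produces unitaries $U_k\in M_{2N}(\td B)$ and $h_k\in{\rm Her}(b)_+$ with $\|U_k^* f_\delta(\phi_k(f_\epsilon(e_C))) U_k-h_k\|\to 0$. Here one uses the crucial identity $\overline{bM_N(\td B)b}=\overline{bM_{2N}(\td B)b}$ (valid because $b\in M_N(\td B)$), so although $U_k$ lies in the doubled matrix algebra, the target hereditary subalgebra is unchanged. For part (2), since $M_N(B)$ has almost stable rank one by Proposition \ref{Pmatrixsr1}, Lemma \ref{LuniH} directly supplies a unitary $u_k\in M_N(B)^\sim$ with $u_k f_\delta(\phi_k(f_\epsilon(e_C)))u_k^*\in{\rm Her}(b)$. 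Set $\Phi_k:={\rm Ad}(U_k^*)\circ \phi_k$ (respectively ${\rm Ad}(u_k)\circ\phi_k$); then for every $c\in F$, $\Phi_k(c)\in_{\gamma+o(1)}\overline{bM_N(\td B)b}$, since $\Phi_k(c)\approx_\gamma \Phi_k(f_\epsilon(e_C))\Phi_k(c)\Phi_k(f_\epsilon(e_C))$ and this product is approximately supported on $h_k\in{\rm Her}(b)$ after conjugation.

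Now I would invoke the semiprojectivity of $C$: since $\Phi_k$ is a homomorphism on $C$ that is within $\gamma_k\to 0$ of the $C^*$-subalgebra $\overline{bM_N(\td B)b}$ on any prescribed finite subset, a standard perturbation argument (see, e.g., the discussion in \cite{Rl}) produces an actual homomorphism $\psi_k\colon C\to \overline{bM_N(\td B)b}$ with $\|\psi_k(c)-\Phi_k(c)\|\to 0$ for every $c$ in a countable dense subset of $C$. Since $\Phi_k$ is unitarily equivalent to $\phi_k$, the functor ${\rm Cu}^\sim$ gives ${\rm Cu}^\sim(\Phi_k)={\rm Cu}^\sim(\phi_k)$, and therefore $\lim_k{\rm Cu}^\sim(\Phi_k)=\lambda$. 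Applying Lemma \ref{LappCum} (whose hypotheses are met because $C$, being a $1$-dimensional NCCW complex with $K_1(C)=\{0\}$, has stable rank one, and $M_N(\td B)$ has finite stable rank) to the pair $\{\Phi_k\},\{\psi_k\}$, we conclude $\lim_k{\rm Cu}^\sim(\psi_k)=\lambda$, as required.

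\medskip

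\noindent The main obstacle will be step (1) under hypothesis (ii): the non-compactness of $\lambda([e_C])$ is only a qualitative statement in ${\rm Cu}^\sim(\td B)$, and one has to convert it into a \emph{quantitative} strict gap at traces before Lemma \ref{LLcomparisonU} or the second part of Theorem \ref{TBtildC} can be applied. This requires the combined use of Lemma \ref{1202-1} (to regularize $b$ by approximants with vanishing $\omega$) and Lemma \ref{Lorthog} (to manufacture a positive spectral-gap slack from the hypothesis that $\lambda([e_C])$ is not compact). The other delicate point is ensuring the semiprojectivity perturbation produces $\psi_k$ landing \emph{exactly} in the hereditary subalgebra $\overline{bM_N(\td B)b}$ — not merely in a close subalgebra — which depends on the approximate containment being measured uniformly on a generating set of $C$.
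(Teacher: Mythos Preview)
Your approach is essentially the same as the paper's: establish a strict trace gap $d_\tau(\phi_k(f_\epsilon(e_C)))<d_\tau(b)$ on $T(B)$, invoke Theorem~\ref{LcomparisonU} for the unitaries, compress into ${\rm Her}(b)$, perturb via semiprojectivity, and conclude with Lemma~\ref{LappCum}.

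Two minor points where the paper is cleaner. First, for hypothesis~(ii) the paper does not go through Lemma~\ref{1202-1} and Lemma~\ref{LLcomparisonU} separately; it simply picks a representative $e$ of $\lambda([e_C])$, uses that $0$ is not isolated in ${\rm sp}(e)$ to find a nonzero $c\in{\rm Her}(e)$ orthogonal to $f_\eta(e)$ (where $[\lambda(f_{\epsilon/16}(e_C))]\le[f_\eta(e)]$), and reads off the strict gap $d_\tau(a(k,\epsilon))<d_\tau(b)$ from $\tau(c)>0$. Theorem~\ref{LcomparisonU} then applies directly---the $\omega$-approximation you describe is already packaged inside its proof. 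Your detour through Theorem~\ref{TBtildC} to first get $\phi_k(f_\epsilon(e_C))\lesssim b$ is also unnecessary: Cuntz subequivalence alone does not produce unitaries, and Theorem~\ref{LcomparisonU} only needs the strict trace inequality you already have.

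Second, for the semiprojectivity step the paper does not perturb a homomorphism that is merely ``close to'' ${\rm Her}(b)$. It explicitly forms the compressed cpc maps $L_n(c)=e_nV_n^*\phi_{k(\epsilon_n)}(f_{\epsilon_n}(e_C)cf_{\epsilon_n}(e_C))V_ne_n$, which land \emph{exactly} in ${\rm Her}(b)$ and are asymptotically multiplicative, and then lifts $L_n$ to $\psi_n$ via semiprojectivity. Your version (``a homomorphism within $\gamma_k$ of the subalgebra'') works but implicitly needs this same compression to justify the lift, so it is worth making explicit. Finally, note that the hypothesis of stable rank one on $C$ required by Lemma~\ref{LappCum} is not part of the stated lemma; the paper relies on it tacitly, and you correctly flag it, but it is an assumption on $C$ rather than a consequence of being in ${\cal I}_0$ in this lemma's generality.
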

 
 \begin{proof}
 
 Let us consider case (1) first.
 For any $\ep>0,$ there exists $k(\ep)\ge 1$ such that 
 $[\phi_k(f_{\ep/4}(e_C))]\le \lambda([f_{\ep/16}(e_C)])\le \lambda([e_C])$ for all $k\ge k(\ep).$  Put $a(k,\ep):=\phi_k(f_{\ep/4}(e_C)).$ 
 For case (i), we have 
 \beq
 d_\tau(a(k,\ep))<d_\tau(b)\rforal \tau\in T(B).
 \eneq
 For case (ii), let $e\in (\td B\otimes {\cal K})_+$ be such that $[e]=\lambda([e_C]).$ 
Then $e\not\sim p$ for any projection.  In other words, we may assume 
that $0$ is not an isolated point in ${\rm sp}(e).$  
Moreover, since $\lambda$ is a morphism in ${\bf Cu},$ it maps compact elements to compact elements.
Hence $[e_C]$ cannot be represented by a projection.  It follows 
that 0 is not an isolated point in ${\rm sp}(e_C).$
Choose $\eta>0$ such that
\beq
[\lambda(f_{\ep/16}(e_C))]\le [f_\eta(e)].
\eneq
For any $\eta>0,$ 
there is a nonzero element $c\in  {\rm Her}(e)_+$ such that $c\perp f_{\eta}(e)$
(see Lemma \ref {Lorthog}).
Since $B$ is simple, $\tau(c)>0$ for all $\tau\in T(B).$ 
It follows that
\beq
d_\tau(f_{\eta}(e))<d_\tau(b)\rforal \tau\in T(B).
\eneq
Thus we also have 
\beq
d_\tau(a(k, \ep))<d_\tau(b)\rforal \tau\in T(B).
\eneq
Recall that $\lambda([e_C])\le [b]$ implies that ${\rm Cu}^\sim(\pi_\C^B)\circ \lambda([e_C])\le [\pi_\C^B(b)].$
Thus, in both case (i) and (ii), 
 by Theorem \ref{LcomparisonU},   there exist a sequence of unitaries $U_n\in M_{2N}(\td B)$ 
 and a sequence of elements $h_n\in {\rm Her}(b)_+$  with $\|h_n\|\le 1$ such that
 \beq\label{Llimintmaps-1-0}
\|U_n^*f_\ep(\phi_{k(\ep)}(e_C))U_n-h_n\|<1/2^{n+1},\,\,\, n=1,2,....
 \eneq
 Put  $\ep_n>0$ such 
 that $\lim_{n\to\infty}\ep_n=0.$ 
 One obtains a sequence of elements $e_n\in {\rm Her}(b)_+$ with $\|e_n\|=1$ 
 and a sequence of uniaries $V_n\in M_{2N}(\td B)$ 
 such that
 \beq\label{Llimintmaps-1}
 && \|e_nV_n^*f_{\ep_n}\phi_{k(\ep_n)}(e_C)V_ne_n-V_n^*f_{\ep_n}(\phi_{k(\ep_n)}(e_C))V_n\|<1/2^n,\,\,n=1,2,....
 \eneq
Put $C_n=\overline{f_{2\ep_n}(e_C)Cf_{\ep_n}(e_C)},$ 
$\Phi_n: C\to  M_{2N}(\td B)$ by $\Phi_n(c)=V_n^*\phi_{k(\ep_n)}(c)V_n,$ and 
 \morp s $L_n: C\to {\rm Her}(b)$ such that $L_n(c)=e_nV_n^*\phi_{k(\ep_n)}(f_{\ep_n}(e_C)cf_{\ep_n}(e_C))V_ne_n$
 for $c\in C.$ Then 
 \beq
 \lim_{n\to\infty}\|L_n(c)L_n(c')-L_n(cc')\|=0\rforal c, c'\in C.
 \eneq 
 Since $C$ is semiprojective, there exists a sequence of \hm s $\psi_n: C\to {\rm Her}(b)$
 such that
 \beq
 \lim_{n\to\infty}\|\psi_n(c)-L_n(c)\|=0\rforal c\in C.
 \eneq
 Let $\Phi_n^\sim, \psi_n^\sim: \td C\to \td B$  be the usual unitization of $\Phi_n$ and $\psi_n,$
 respectively.
 Then, by \eqref{Llimintmaps-1}, for a fixed $m,$ on $\C\cdot 1_{\td C}+C_m,$
 \beq
 \lim_{n\to\infty}\|\psi_n^\sim(c)-\Phi_n^\sim(c)\|=0\,\, (\text{for all}\,\, c\in \C\cdot 1_{\td C}+C_m).
 \eneq
 Note that $V_n$ are unitaries in $M_2(\td B).$ Hence ${\rm Cu}^\sim(\Phi_n)={\rm Cu}^\sim(\phi_{k(\ep_n)}).$
 It follows from Lemma \ref{LappCum} that
 \beq
 \lim_{n\to\infty}{\rm Cu}^\sim(\psi_n)=\lambda.
 \eneq

  For case (2),   we work in $B.$ 
  By the end of \ref{151}, 
   $\lambda([e_C])\le [b]$ in ${\rm Cu}(B).$
  %
  Then, instead of  \eqref{Llimintmaps-1-0},
   since $B$ has almost stable rank one, 
   by  Lemma \ref{LRordam1},
   there  is, for each $k,$  a unitary 
  $U\in \td M_{2N}(\td B)$ such that
  \beq
  U^*f_\ep(\phi_k(e_C))U\in {\rm Her}(b).
  \eneq
  The rest of the proof is similar but simpler. 
 \end{proof}

\begin{rem}\label{Rcountexample}
Let $\phi: C\to M_n(\td B)$ be a \hm\, such that $[\phi(e_C)]\le [b]$ in ${\rm Cu}^\sim(\td B)$
for some $b\in \td B_+,$ where $e_C$ is a strictly positive element of $C.$   
Since we do not know whether ${\rm Cu}^\sim(\td B)$ has the cancellation,  in the case that $[\phi(e_C)]$
is represented by a projection, 
there might not be any $d\in \td B_+$
such that $[d]=[\phi(e_C)]$ in ${\rm Cu}^\sim(\td B).$   In that case, there would not be any
\hm\, $\psi: C\to \td B_+$  such that ${\rm Cu}^\sim(\psi)={\rm Cu}^\sim(\phi).$
Suppose that there is $d\in \td B_+$ such that  $[d]=[\phi(e_C)]$ in ${\rm Cu}^\sim(\td B).$
We still do not know $d\sim \phi(e_C)$ in ${\rm Cu}(\td B)$ without knowing the cancellation in ${\rm Cu}^\sim(\td B).$

Suppose that $[\phi(e_C)]$ is not a compact element. 
In an ideal  situation, say there is $x\in M_n(\td B)$ such that $x^*x=\phi(e_C)$ and $xx^*\in {\rm Her}(d),$ then 
 one obtains a partial isometry $v\in M_n(\td B)^{**}$ such that
$v^*v\phi(c)=\phi(c)v^*v=\phi(c)$ for all $c\in C$ and $v\phi(c)v^*\in {\rm Her}(d).$
Define  $\psi: C\to {\rm Her}(d)$ by $\psi(c)=v\phi(c)v^*$ for all $c\in C.$ Then 
${\rm Cu}(\psi)={\rm Cu}(\phi).$ However, ${\rm Cu}^\sim(\psi)$ may not be the same as ${\rm Cu}^\sim(\phi)$
(see Example \ref{Counexm} below).
It is  crucial  that we have   unitaries $U_n$ in Theorem \ref{LcomparisonU}.

Let us assume  that $\lambda([e_C])$ is compact and $\lambda([e_C])\le [b]$ for some $b\in \td B_+.$ 
Suppose that $[\lambda([e_C])\not=[1_{\td B}].$ Since $B$ is stably projectionless, $\td B$
has only one nonzero projection $1_{\td B}.$ To see this, let $p\in \td B$ be a nonzero 
projection. Then $p\not\in B.$ Therefore $\pi_\C^B(p)=\pi_\C^B(1_{\td B}).$ 
This implies that $1_{\td B}-p\in B.$ Since $B$ is stably projectionless, $p=1_{\td B}.$
Therefore, in this case, $\lambda([e_C])$ cannot be represented 
by an element in $\td B.$ Consequently, there will be no sequence of \hm s $\psi_k: C\to \td B$ such 
that $\lim_{k\to\infty}{\rm Cu}^\sim(\psi_k)=\lambda.$  Even if $\lambda([e_C])=[1_{\td B}]$ in ${\rm Cu}^\sim(\td B)$ 
and $\phi_k: C\to M_N(\td B)$ is a sequence of \hm s such that $\lim_{k\to\infty}{\rm Cu}^\sim(\phi_k)=\lambda,$
and each $\psi_k(e_C)$ is a projection so that $[\psi_k(e_C)]=\lambda([e_C])=[1_{\td B}]$ in 
${\rm Cu}^\sim(\td B),$ one may not have $\psi_k(e_C)\sim 1_{\td B}$ in ${\rm Cu}(\td B).$ It is then
impossible to perturb $\phi_k$ into \hm s  $\psi_k: C\to  \td B$  such that
$\lim_{k\to\infty}{\rm Cu}^\sim(\psi_k)=\lambda.$

\end{rem}

 \section{Unitization}

The following is a result of L. Robert.

\begin{lem}[Lemma 3.2.1 of \cite{Rl}]\label{Lunitalext}
 Let $A$ be a  \CA\, of stable rank one and $B$ be a unital \CA\, with finite stable rank.
 Let $e_A\in A$ be a strictly positive element. Let $\af: {\rm Cu}^\sim(A)\to {\rm Cu}^\sim(B)$
 be a morphism in ${\bf Cu}$ such that $\af([e_A])\le [1_B].$ 
 Then there exists a unique morphism $\af^\sim: {\rm Cu}^\sim(\td A)\to {\rm Cu}^\sim(B)$
 in ${\bf Cu}$ 
 such that $\af([1_{\td A}])=[1_B].$
 \end{lem}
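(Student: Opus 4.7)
The plan is to exploit the fact that $\mathrm{Cu}^\sim(\td A)$ is generated, as an ordered abelian monoid with formal differences, by the image of $\mathrm{Cu}^\sim(A)$ under the natural inclusion together with the single element $[1_{\td A}]$. Concretely, if $x = [a] - n[1_{\td A}] \in \mathrm{Cu}^\sim(\td A)$ with $a \in M_k(\td A)_+$, let $r$ denote the rank of $\pi_\C^A(a) \in M_k(\C)$; then $[a] - r[1_{\td A}]$ lies in $\mathrm{Cu}^\sim(A)$, and $x$ is expressed as $([a] - r[1_{\td A}]) + (r-n)[1_{\td A}]$, where the integer $r-n$ may be negative but is accommodated by the formal difference structure of $\mathrm{Cu}^\sim$ on the unital algebra $\td A$. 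A second structural point I will use is that the inclusion $\mathrm{Cu}^\sim(A) \hookrightarrow \mathrm{Cu}^\sim(\td A)$ is injective: $[1_{\td A}]$ itself belongs to $\mathrm{Cu}^\sim(A)$, and two formal differences $[a]-r[1_{\td A}]$ and $[b]-s[1_{\td A}]$ satisfying the rank condition that agree in $\mathrm{Cu}^\sim(\td A)$ already witness the defining equivalence in $\mathrm{Cu}^\sim(A)$ after enlarging the auxiliary parameter $j$.

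With these observations in hand, I will define $\alpha^\sim$ by declaring $\alpha^\sim|_{\mathrm{Cu}^\sim(A)} = \alpha$ and $\alpha^\sim([1_{\td A}]) = [1_B]$, and extending to all of $\mathrm{Cu}^\sim(\td A)$ by additivity together with formal subtraction of copies of $[1_B]$ (which is legitimate in $\mathrm{Cu}^\sim(B)$ because $B$ is unital). Explicitly, $\alpha^\sim(y + m[1_{\td A}] - m'[1_{\td A}]) := \alpha(y) + m[1_B] - m'[1_B]$. Well-definedness requires that two such representations of the same element give the same image; this reduces, after collecting terms and adding a common large multiple of $[1_{\td A}]$, to an equality in $\mathrm{Cu}^\sim(A)$ (by the injectivity of the inclusion), to which $\alpha$ can be applied. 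The hypothesis $\alpha([e_A]) \le [1_B]$ enters at this step to guarantee that the prescribed value $\alpha^\sim([1_{\td A}]) = [1_B]$ is order-consistent with the values already assigned by $\alpha$: since $[e_A] \le [1_{\td A}]$ in $\mathrm{Cu}(\td A)$, any extension $\alpha^\sim$ of $\alpha$ respecting $[1_{\td A}]\mapsto [1_B]$ must satisfy $\alpha([e_A]) \le [1_B]$, so the condition is both necessary and, together with injectivity, sufficient for a consistent definition.

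Verifying the $\mathbf{Cu}$-axioms is then mostly formal: additivity is built in; order preservation follows from rewriting a prospective inequality in $\mathrm{Cu}^\sim(\td A)$ as an honest inequality in $\mathrm{Cu}(\td A)$ after moving negative $[1_{\td A}]$-terms, reading it in $\mathrm{Cu}^\sim(A)$, and applying $\alpha$; preservation of suprema of increasing sequences and of the compact containment $\ll$ reduces, by passing to a subsequence along which the number of $[1_{\td A}]$'s is constant (using that $[1_{\td A}]$ is a compact element), to the corresponding property of $\alpha$ on $\mathrm{Cu}^\sim(A)$, combined with the cancellation-by-$[1_B]$ property available in $\mathrm{Cu}^\sim(B)$ by the finite stable rank assumption on $B$ (cf.\ Corollary 4.10 of \cite{RS}). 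Uniqueness is immediate from the generation of $\mathrm{Cu}^\sim(\td A)$ by $\mathrm{Cu}^\sim(A)$ and $[1_{\td A}]$: any two candidates for $\alpha^\sim$ agree on these generators and hence everywhere, by additivity and cancellation by $[1_B]$ in $\mathrm{Cu}^\sim(B)$. The hardest step in the whole argument is well-definedness in the presence of formal subtraction by multiples of $[1_{\td A}]$, where one must simultaneously invoke the injectivity of $\mathrm{Cu}^\sim(A) \hookrightarrow \mathrm{Cu}^\sim(\td A)$ and the compatibility inequality $\alpha([e_A]) \le [1_B]$; once this is settled the remaining verifications are routine consequences of the $\mathbf{Cu}$-properties already enjoyed by $\alpha$.
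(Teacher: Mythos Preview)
Your outline captures the overall architecture correctly, and the definition of $\af^\sim$ you give agrees with the paper's formula
\[
\af^\sim([a]-m[1_{\td A}]) \;=\; \af([a]-n[1_{\td A}]) + (n-m)[1_B],\qquad n=[\pi_\C^A(a)].
\]
However, there are two problems.

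First, a small one: the claim that ``$[1_{\td A}]$ itself belongs to ${\rm Cu}^\sim(A)$'' is false. By definition, ${\rm Cu}^\sim(A)$ consists of classes $[a]-n[1_{\td A}]$ with $[\pi_\C^A(a)]=n$; since $\pi_\C^A(1_{\td A})$ has rank $1$, the class $[1_{\td A}]$ (with no subtraction) does not lie in the image. This does not by itself sink the argument, because the order-embedding of ${\rm Cu}^\sim(A)$ into ${\rm Cu}^\sim(\td A)$ is Proposition~3.1.6 of \cite{Rl} and can simply be quoted; but your justification for it is wrong.

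Second, and this is a genuine gap: your treatment of order preservation is incomplete precisely at the point the paper singles out. After clearing denominators you are reduced to showing that $[a_1]\le [a_2]$ in ${\rm Cu}(\td A)$ implies $\af^\sim([a_1])\le \af^\sim([a_2])$. Writing $n_i=[\pi_\C^A(a_i)]$, the case $n_1=n_2$ does reduce, via the order-embedding just cited, to an inequality in ${\rm Cu}^\sim(A)$ and hence to $\af$. But when $n_1<n_2$ your recipe ``read it in ${\rm Cu}^\sim(A)$ and apply $\af$'' does not work: the element $[a_2]-n_1[1_{\td A}]$ is \emph{not} in ${\rm Cu}^\sim(A)$, and one cannot simply pad $a_1$ with $(n_2-n_1)$ copies of $1_{\td A}$, since $[a_1]+(n_2-n_1)[1_{\td A}]\le [a_2]$ is in general false. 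The paper handles this case by an explicit construction: for small $\dt$ one finds $c\in {\rm Her}(a_2)$ with $\pi_\C^A(c)\perp \pi_\C^A(f_{2\dt}(a_1))$ and $[\pi_\C^A((a_1-\dt)_+)]+[\pi_\C^A(c)]=[\pi_\C^A(a_2)]$, so that $[(a_1-\dt)_+]+[c]\le [a_2]$ with matching $\pi$-ranks. One then uses additivity and positivity of $\af^\sim$ (already established) together with the equal-rank case to conclude $\af^\sim([(a_1-\dt)_+])\le \af^\sim([a_2])$, and lets $\dt\to 0$. This ``filling the rank gap inside ${\rm Her}(a_2)$'' step is the substantive new ingredient over Robert's original argument (where $B$ has stable rank one and cancellation is available), and it is missing from your sketch.
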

 
 \begin{proof}
 We keep  the same notation in the proof of Lemma 3.2.1 of \cite{Rl}.
 For any $[a]\in W(\td A)$ such that $[\pi(a)]=n<\infty,$ one defines, for any integer $m,$ (in ${\rm Cu}^\sim(B)$)
 \beq	\label{Rl33}
 \af^\sim([a]-m[1_{\td C}])=\af([a]-n[1_{\td C}])+(n-m)[1_{\td B}].
 \eneq
 (Note that, by subsection 4.2 of \cite{RS}, the revised definition of ${\rm Cu}^\sim(B)$ is the same as that 
 defined in \cite{Rl}).
 The exactly the same proof first shows that such $\af^\sim$ is uniquely defined, additive 
 and {{sends positive elements to positive elements.}}  Let $a_1,a_2\in (\td A\otimes {\cal K})_+$  with 
 $[a_1], [a_2]\in W(\td A)$  be such that $[a_1]\le [a_2].$
 {{We also use $\pi: \td A\to \C$ as in the proof of Lemma 3.2.1 of \cite{Rl}.}}
 If $[\pi(a_1)]=[\pi(a_2)],$ as in the proof of Lemma 3.2.1 of \cite{Rl}, $\af^\sim([a_1])\le \af^\sim([a_2]).$
 Consider now the case $[\pi(a_1)]<[\pi(a_2)].$ 
 Let $1>\ep>0.$ Choose $0<\dt<\ep/8$ such that $\pi(f_{2\dt}(a_1))=\pi(a_1)$ and $\pi(f_{2\dt}(a_1))$
 is a projection. We may also assume that $\pi(f_{2\dt}(a_1))< \pi(a_2),$ by 
 replacing $a_2$ with $u^*g(a_2)u$ for some strictly positive functions in $C_0((0, \|a_2\|])$
 and a scalar unitary $u\in {\cal K}^\sim.$
 \Wlog, we may further assume that $f_\dt(a_1)\in {\rm Her}(a_2)$ (see Proposition 2.4 of \cite{Rr2}).
 Choose $a_3\le a_2$ such that $\pi(a_3)\perp \pi(f_{2\dt}(a_1))$ and $[\pi(a_3)]+[\pi(f_{2\dt}(a_1)]=[\pi(a_2)].$
 Put $c=(1_{(A\otimes {\cal K})^\sim}-f_{\dt/2}(a_1))a_3(1_{(A\otimes {\cal K})^\sim}-f_{\dt/2}(a_1)).$
 Then $\pi(c)\perp \pi(f_{2\dt}(a_1))$ and 
 $\pi(c)=\pi(a_3).$ 
 Now $[c]\in W(\td A)$ and 
 \beq\label{Unt-409}
 [(a_1-\dt)_+]+[c]\le [a_2]\andeqn [\pi(a_1-\dt)_+]+[\pi(c)]=[\pi(a_2)].
 \eneq
 Let $n_{1,\dt}=[\pi((a_1-\dt)_+)].$  
 %
 %
 Then,  since $\af^\sim $  maps positive elements to positive elements
 and is additive,  by \eqref{Rl33} and \eqref{Unt-409}, as in  the proof 
 of Lemma 3.2.1 of \cite{Rl}, one computes that
 \beq
 &&\hspace{-0.9in}\af^\sim([(a_1-\dt)_+])\le \af^\sim([(a_1-\dt)_+])+\af^\sim ([c])
 =\af^\sim([(a_1-\dt)_+]+[c])\\
   && \le \af([(a_1-\dt)_+]+[c]-n_{1,\dt}[1]-[\pi(c)][1])+ (n_{1, \dt}+[\pi(c)])[1]
   \le \af^\sim ([a_2]).
 \eneq
 As in the proof of Lemma 3.2.1 of \cite{Rl}, it follows that $\af^\sim$ preserves the order. 
 One then proceeds the rest of the proof of 
 Lemma 3.2.1 of \cite{Rl}. 
 \end{proof}

\begin{rem}
In \ref{Counexm}, it will be shown that there are  \hm s $\phi, \psi: A\to B$ such that ${\rm Cu}(\phi)={\rm Cu}(\psi)$ but
${\rm Cu}(\phi^\sim)\not={\rm Cu}(\psi^\sim).$ It may be worth noticing
that Lemma \ref{Lunitalext} deals with a  different situation. 

\end{rem}

\begin{df}

Let $F_1$ and $F_2$ be two finite dimensional \CA s.
Suppose that there are  (not necessary unital)  \hm s
$\phi_0, \phi_1: F_1\to F_2.$
Define
\beq\label{dd63}
A=A(F_1, F_2,\phi_0, \phi_1)
=\{(f,g)\in  C([0,1], F_2)\oplus F_1: f(0)=\phi_0(g)\andeqn f(1)=\phi_1(g)\}.
\eneq
Denote by ${\cal C}$ the class of all  \CA s of the form $A=A(F_1, F_2, \phi_0, \phi_1).$
These \CA s {{are called  Elliott-Thomsen building blocks as well as
one  dimensional non-commutative CW complexes  (see  \cite{ELP1} and  \cite{point-line}).}}

Denote by ${\cal I}_0$  the subclass of \CA s  $C$ in ${\cal C}$
such that $K_1(C)=\{0\}.$  

All \CA s in ${\cal C}$ have stable rank one (see, for example, Lemma 3.3 of \cite{GLN}) and are semiprojective (see 
Theorem 6.22 of \cite{ELP1}). 
\end{df}

\begin{lem}\label{LheredC0}
Let $A, C\in {\cal I}_0$ be  \CA s such that  there is an isomorphism
$\phi:A\otimes {\cal K}\cong C\otimes {\cal K}.$
Then  there exists an integer $n\ge 1$ and an injective  \hm\, $\iota: \phi(A)\to M_n(C)$
such that $\iota\circ \phi(A)$ is a full  \SCA\, of $M_n(C)$
and ${\rm Cu}^\sim(\iota)={\rm Cu}^\sim(\id_{\phi(A)}).$ 
\end{lem}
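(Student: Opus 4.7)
\noindent\textbf{Proof proposal for Lemma \ref{LheredC0}.} The plan is to exhibit a suitable finite corner of $C\otimes\mathcal{K}$ containing a Cuntz-dominant of a strictly positive element of $\phi(A)$, and then apply Robert's existence theorem (Theorem~1.0.1 of \cite{Rl}) to produce $\iota$ with the prescribed Cuntz-semigroup morphism. Injectivity and fullness will then be automatic from the fact that $\mathrm{Cu}^\sim(\iota)$ is the identity on the full hereditary subalgebra $\phi(A)$.

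First, I would pick a strictly positive element $e_A\in A_+^{\bf 1}$, so that $\phi(e_A)\in (C\otimes\mathcal{K})_+$ is strictly positive in $\phi(A)$, and $\phi(A)={\rm Her}(\phi(e_A))$ is automatically a full hereditary \SCA\ of $C\otimes\mathcal{K}$ because $\phi$ is an isomorphism. Writing $C=A(F_1,F_2,\phi_0,\phi_1)$ in the standard NCCW form, I would use that for each $t\in(0,1)$ the fiber $\pi_t(\phi(e_A))\in F_2\otimes\mathcal{K}$ is a compact self-adjoint operator with finite rank, and similarly $\pi_e(\phi(e_A))\in F_1\otimes\mathcal{K}$ has finite rank. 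A standard compactness/continuity argument (using that $C$ has compact $T(C)$ modulo scaling and that the rank is upper semicontinuous on $[0,1]$) then supplies an integer $n\ge 1$ and a strictly positive element $c\in M_n(C)_+$ with $d_\tau(\phi(e_A))\le d_\tau(c)$ for every $\tau\in T(C)$.

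Next, working inside $\widetilde{C}$ if $C$ is not unital and invoking 3.18 of \cite{GLN}, I would conclude $[\phi(e_A)]\le[c]$ in ${\rm Cu}(C)$, and thus the inclusion $\phi(A)={\rm Her}(\phi(e_A))\hookrightarrow C\otimes\mathcal{K}$ factors through $M_n(C)$ at the level of Cuntz classes. Since $C\otimes\mathcal{K}$ has stable rank one, Theorem~1.0.1 of \cite{Rl} produces a \hm\ $\iota:\phi(A)\to M_n(C)$ such that ${\rm Cu}^\sim(\iota)={\rm Cu}^\sim(\mathrm{id}_{\phi(A)})$, i.e.\ $\iota$ realizes at the level of ${\rm Cu}^\sim$ the embedding of $\phi(A)$ into $M_n(C)$ coming from $[\phi(e_A)]\le[c]$.

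Finally, I would check that $\iota$ has the required properties. If $c\in\phi(A)_+$ with $\iota(c)=0$, then ${\rm Cu}^\sim(\iota)([c])=0$, hence $[c]=0$ in ${\rm Cu}^\sim(\phi(A))$, whence $c=0$; so $\iota$ is injective. Fullness of $\iota(\phi(A))$ in $M_n(C)$ follows from fullness of $\phi(A)$ in $C\otimes\mathcal{K}$ together with the fact that ${\rm Cu}^\sim(\iota)$ agrees with the identity on $[\phi(e_A)]$, making $\iota(\phi(e_A))$ a full positive element of $M_n(C)$. The main technical obstacle I anticipate is the first step: producing the integer $n$ and the element $c\in M_n(C)$ that dominates $\phi(e_A)$ in the Cuntz sense uniformly across the fibers, since one must control rank both at the interior points $t\in(0,1)$ and at the boundary evaluations through $\phi_0,\phi_1$; everything after that is a direct application of Robert's classification of morphisms from $\mathcal{I}_0$-algebras into stable-rank-one targets.
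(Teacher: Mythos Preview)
Your proposal is correct and follows essentially the same route as the paper: dominate $[\phi(e_A)]$ by a strictly positive element of some $M_n(C)$, cite 3.18 of \cite{GLN} for the Cuntz comparison, apply Robert's Theorem~1.0.1, and verify injectivity and fullness exactly as you describe. The only point worth noting concerns the step you flag as the main obstacle: the paper bypasses any fiberwise semicontinuity argument (rank is in fact \emph{lower} semicontinuous, not upper, so that route would not directly yield an upper bound anyway) by instead observing that $A\in\mathcal{I}_0$ forces $\sup_\xi r_\xi(e_A)\le N<\infty$ over all irreducible representations of $A$, and the isomorphism $\phi$ transfers this bound verbatim to $\phi(e_A)$; one then takes $c=e_C\otimes E_n$ with $n$ large enough that $n\cdot\inf_\xi r_\xi(e_C)$ exceeds $N$.
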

(Note that we identify $A$ with  the first corner $A\otimes e_{1,1}$ of $A\otimes {\cal K}.$)
\begin{proof}
Let $D$ be a liminal \CA.
Denote by ${\rm Irr}(D)$ the set of irreducible representations of $D.$
If $d\in D_+$ and $\xi\in {\rm Irr}(D),$  let us denote $r_\xi(d)$ 
the rank of $\xi(d)$ (with value in $\{0\}\cup \N\cup \{\infty\}$).
 
Let $e_A\in A_+^{\bf 1}$ be a strictly positive element of $A.$
There is $N\ge 1$ such that 
$$
1\le \inf\{ r_\xi(e_A): \xi\in {\rm Irr}(A\otimes {\cal K})\}\le \sup\{r_\xi(e_A): \xi\in {\rm Irr}(A\otimes {\cal K})\}\le N,
$$ 
viewing $A$ as a hereditary \SCA\, of $A\otimes {\cal K}.$
%
%
%
Put $A_1=\phi(A).$  Then $A_1$ is a full hereditary \SCA\, of $C\otimes {\cal K}$
as isomorphisms preserve the full hereditary \SCA s.
Hence $A_1={\rm Her}(\phi(e_A)).$  Note that,  since $\phi$ is an isomorphism,  
$$
1\le \inf\{ r_\xi(\phi(e_A)): \xi\in {\rm Irr}(C\otimes {\cal K})\}\le \sup \{ r_\xi(\phi(e_A)): \xi\in {\rm Irr}(C\otimes {\cal K})\} \le N.
$$
%
Fix a strictly positive element $e_C\in C.$ 
Then, there is $N_1\ge 1$ such that
$$
1\le \inf\{ r_\xi(e_C): \xi\in {\rm Irr}(C)\}\le \sup\{ r_\xi(e_C): \xi\in {\rm Irr}(C)\}\le N_1.
$$
Let $\{e_{i,j}\}\subset {\cal K}$ be a system of matrix units
and $E_j=\sum_{i=1}^je_{i,i}$ (for $j\ge 1$). 
Put $c_n:=e_C\otimes E_n.$ Then $r_\xi(c_n)=n\cdot r_\xi(e_C).$
Therefore there is an integer $n\ge 1$ 
 such that $d_\tau(\phi(e_A))< d_\tau(c_n)$
for all $\tau\in T(C).$
Working in ${\tilde C}$ if $C$ is not unital,
by 3.18 of \cite{GLN}, $\phi(e_A)\lesssim c_n$ in ${\rm Cu}(C).$
Note that $\phi(A)$ is a full hereditary \SCA\, $C\otimes{\cal K}.$ 
Since $C\otimes {\cal K}$ has stable rank one, by Theorem 1.0.1 of \cite{Rl},
there is a \hm\, $\iota: \phi(A)\to M_n(C)$ such 
that ${\rm Cu}^\sim (\iota)={\rm id}_{{\rm Cu}^\sim(C)}.$

Note, if $\iota(c)=0$ for some $c\in C_+,$ then ${\rm Cu}^\sim (\iota)([c])=0.$
Thus, $\iota$ is injective.
To see $\iota\circ \phi(A)$ is full, one needs to show that $\iota\circ \phi(e_A)$ is full 
in $M_n(C).$ But $\iota\circ \phi(e_A)\sim \phi(e_A)$ and $\phi(e_A)$ is full
since $\phi$ is an isomorphism. 
\end{proof}

We will use the following known  and easy fact.

 \begin{lem}\label{Lproj}
Let $B$ be as in \ref{151}. Suppose that $b\in M_n(\td B)_+$ is such that $[b]$ is a compact element in 
${\rm Cu}^\sim(B).$     Then there is $g\in C_0((0, \|b\|])$ such that $g(b)$ is a projection. 
\end{lem}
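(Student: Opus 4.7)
The plan is to combine the standard characterisation of compactness in ${\rm Cu}$ with the dimension-function comparison now available for $\td B$ (Theorem \ref{TBtildC} and Lemma \ref{TcomparisonintdA}) to force a spectral gap of $b$ at $0$. First I would note that, since $b$ is positive and $[b]$ is compact, the identity $[b]=\sup_n[(b-1/n)_+]$ (Theorem 1 of \cite{CEI}) together with $[b]\ll[b]$ yields some $\ep>0$ with $[(b-\ep)_+]=[b]$ in ${\rm Cu}(\td B)$.

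The core step is to deduce that ${\rm sp}(b)\cap(0,\ep/4)=\emptyset$. Suppose otherwise; then I pick a nonzero $k\in C_0((0,\ep/4))_+$ with $\|k\|\le 1$ whose support meets ${\rm sp}(b)$, so that $k(b)\ne 0$ and $k(b)\,(b-\ep)_+=0$. Both $k(b)$ and $(b-\ep)_+$ lie in ${\rm Her}(b)$, so in ${\rm Cu}(\td B)$ one obtains
\[
[k(b)]+[b]=[k(b)]+[(b-\ep)_+]\le [b].
\]
For every tracial state $\tau\in T(\td B)$ the dimension function $d_\tau$ is additive and order-preserving, and since $b\in M_n(\td B)$ is bounded one has $d_\tau(b)\le n<\infty$; additivity therefore forces $d_\tau(k(b))=0$ for every $\tau\in T(\td B)$.

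I would then split over the two families of traces contained in $T(\td B)$. Taking the scalar trace $\tau_\C$ vanishing on $B$ gives $[\pi_\C^B(k(b))]=0$ in ${\rm Cu}(M_n)$, so $\pi_\C^B(k(b))=0$ and $k(b)\in M_n(B)_+$. Taking the extension to $\td B$ of any $\tau_0\in T(B)$ (faithful on $M_n(B)_+$ by simplicity of $B$) then forces $k(b)=0$, contradicting the choice of $k$. Consequently $0$ is isolated in ${\rm sp}(b)$, and any $g\in C_0((0,\|b\|])$ which is identically $1$ on ${\rm sp}(b)\setminus\{0\}$ produces a projection $g(b)$; the case $b=0$ is handled trivially by $g=0$.

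The main obstacle I anticipate is the passage from $[k(b)]+[b]\le [b]$ to $k(b)=0$ without invoking cancellation in ${\rm Cu}(\td B)$: cancellation of projections there is not available (cf.\ Remark \ref{Rcomparison}), so one must bypass it via dimension functions. This requires $d_\tau(b)<\infty$ for every $\tau\in T(\td B)$, which is precisely where the continuous-scale hypothesis of \ref{151} on $B$ (giving that $T(\td B)$ consists of tracial states rather than unbounded traces) is essential; if that boundedness failed, the additivity step would collapse.
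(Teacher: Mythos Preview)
Your proof is correct and follows the same contradiction-via-dimension-functions strategy as the paper: assume $0$ is a limit point of ${\rm sp}(b)$, produce a nonzero orthogonal piece inside ${\rm Her}(b)$, and contradict the equality forced by compactness using traces. The only difference is that the paper first invokes Theorem~6.1 of \cite{RS} to represent $[b]$ by an actual projection $p\in M_N(\td B)$ and then compares $d_\tau(f_\ep(b))$ with $d_\tau(p)$, whereas you bypass that citation by working directly with $[(b-\ep)_+]=[b]$ and by explicitly splitting $T(\td B)$ into $\tau_\C$ and the extensions of $T(B)$; this makes your version slightly more self-contained.
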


\begin{proof}
By Theorem 6.1 of \cite{RS} (recall $B$ has stable rank at most 2),  there is a projection $p\in M_N(\td B)$ for some integer $N\ge 1$ such 
that $[b]=[p]$ in ${\rm Cu}^\sim (\td B).$ 

If $0$ is not an isolated point of ${\rm sp}(b),$ for any $\ep>0,$ there is a nonzero element $c\le b$ such that $c\perp f_\ep(b).$
Since $B$ is simple, $\tau(c)\not=0$ for any $\tau\in T(B).$ It follows that
\beq\label{Lproj-1}
d_\tau(f_\ep(b))<d_\tau(p)\rforal \tau\in T(B).
\eneq
However, since $p$ is compact, for all small $\ep,$ $[p]\le [f_\ep(b)]$ in ${\rm Cu}(\td B)^{\circeq}.$
This contradicts with \eqref{Lproj-1}. So 
$0$ must be an isolated point of ${\rm sp}(b).$ Thus there is a such $g$ so that $g(b)$ is a projection.
\end{proof}

 \begin{thm}\label{Lunithm}
 Let $C$ be a \CA\, in ${\cal I}_0$ with a strictly positive element $e_C$ and $B$ be a simple \CA\, which satisfies conditions in \ref{151}.
 Suppose that $\lambda: {\rm Cu}^\sim (\td C)\to {\rm Cu}^\sim (\td B)$ is a morphism in ${\bf Cu}$ such that
 $\lambda([1_{\td C}])=[b]$ for some (compact element) $b\in M_n(\td B)_+$   (for some integer $n\ge 1$).
  Suppose also that there exists a sequence of \hm s $\phi_k: C\to M_n(\td B)$
 such that $\lim_{k\to\infty}{\rm Cu}^\sim (\phi_k)=\lambda|_{{\rm Cu}^\sim(C)}.$
 
 (1) If $\lambda([e_C])$ is not a compact element, 
 then there exists a sequence of \hm s $\psi_k: \td C\to \overline{bM_n(\td B)b}$
 such that 
 $$
 \lim_{k\to\infty}{\rm Cu}^\sim(\psi_k)=\lambda.
 $$
 
 (2) If $\lambda([e_C])$ is a compact element and $\lambda([c])\not=0$ for any $c\in C_+\setminus \{0\},$ 
then there exists a sequence of \hm s $\psi_k: \td C\to M_n(\td B)$ such that
$$
\lim_{k\to\infty}{\rm Cu}^\sim(\psi_k)=\lambda.
$$


 \end{thm}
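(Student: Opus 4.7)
The plan is to reduce both cases to the setup where the image of $1_{\tilde C}$ can be taken to be a genuine projection, and then to extend the given approximating sequence $\{\phi_k\}$ to the unitization by sending $1_{\tilde C}$ to such a projection. First I would apply Lemma \ref{Lproj} to the compact class $\lambda([1_{\tilde C}])=[b]$ to replace $b$ by a projection $p=g(b)\in M_n(\td B)$ with $[p]=\lambda([1_{\td C}])$. In either case the target of the extended map should be the (non-unital) corner $pM_n(\td B)p$, viewed as a subalgebra of $M_n(\td B)$ (trivially in Case 1, by inclusion in Case 2).

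For Case 1, since $\lambda([e_C])$ is non-compact, Lemma \ref{Llimintmaps}(1)(ii) applies directly and produces a new sequence of \hm s $\psi_k': C\to pM_n(\td B)p$ with $\lim_k {\rm Cu}^\sim(\psi_k')=\lambda|_{{\rm Cu}^\sim(C)}$. Because $pM_n(\td B)p$ is unital with unit $p$, each $\psi_k'$ extends uniquely to a (non-unital) $*$-homomorphism $\psi_k:\td C\to pM_n(\td B)p$ by declaring $\psi_k(1_{\td C})=p$. To verify $\lim_k{\rm Cu}^\sim(\psi_k)=\lambda$ on all of ${\rm Cu}^\sim(\td C)$, I would invoke Lemma \ref{Lunitalext}: any element of ${\rm Cu}^\sim(\td C)$ has the form $x=[a]-m[1_{\td C}]$ with $[\pi(a)]=r<\infty$, and formula \eqref{Rl33} gives $\lambda(x)=\lambda([a]-r[1_{\td C}])+(r-m)[p]$ and ${\rm Cu}^\sim(\psi_k)(x)={\rm Cu}^\sim(\psi_k')([a]-r[1_{\td C}])+(r-m)[p]$. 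Convergence on ${\rm Cu}^\sim(C)$ then implies convergence on ${\rm Cu}^\sim(\td C)$, since the compensating terms $(r-m)[p]$ match exactly.

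For Case 2 the extra hypothesis $\lambda([c])\neq 0$ for $c\neq 0$ enters. Here $\lambda([e_C])=[q]$ is compact (Lemma \ref{Lproj} again), with $[q]\le[p]$. The goal is to perturb $\phi_k$ into a map $\phi_k':C\to p_kM_n(\td B)p_k$ for some projection $p_k\in M_n(\td B)$ with $[p_k]=[p]$, and then extend by $1_{\td C}\mapsto p_k$. Using that $[e_C]\ll[1_{\td C}]$ in ${\rm Cu}^\sim(\td C)$ and the convergence $\lim_k{\rm Cu}^\sim(\phi_k)=\lambda|_{{\rm Cu}^\sim(C)}$, I would show that eventually $[\phi_k(e_C)]$ lies below a projection of class $[p]$, produced via the weak-cancellation description in Lemma \ref{TcomparisonintdA}. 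Theorem \ref{LcomparisonU} supplies unitaries $U_k\in M_{2n}(\td B)$ such that $U_k^*f_\eta(\phi_k(e_C))U_k$ lies (up to small error) in ${\rm Her}(p_k)$, after which semiprojectivity of $C$ and the argument of Lemma \ref{Llimintmaps} produce honest \hm s $\phi_k':C\to p_kM_n(\td B)p_k$ with the same asymptotic ${\rm Cu}^\sim$-behavior. The non-vanishing hypothesis on $\lambda$ prevents collapse of the approximation, and the injection $p_kM_n(\td B)p_k\hookrightarrow M_n(\td B)$ lands $\psi_k$ in $M_n(\td B)$ as required; the convergence ${\rm Cu}^\sim(\psi_k)\to\lambda$ again follows from Lemma \ref{Lunitalext}.

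The main obstacle is Case 2: without stable rank one for $\td B$ one cannot simply conjugate $\phi_k(e_C)$ into a fixed corner. The remedy is Theorem \ref{LcomparisonU}, which compensates for the missing cancellation by allowing unitary conjugation in $M_{2n}(\td B)$; the delicate point is that this only produces approximate containment, so semiprojectivity of $C$ is needed to convert the resulting \morp s into genuine \hm s without losing the convergence. Once this is achieved, the unitization argument above goes through uniformly in $k$.
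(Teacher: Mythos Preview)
Your Case~(1) is correct and is exactly the paper's argument: replace $b$ by a projection $p$ via Lemma~\ref{Lproj}, feed $\phi_k$ through Lemma~\ref{Llimintmaps}~(1)(ii) to land in $pM_n(\td B)p$, extend by $1_{\td C}\mapsto p$, and read off convergence on ${\rm Cu}^\sim(\td C)$ from Lemma~\ref{Lunitalext}.

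Your Case~(2) has a genuine gap, and you have misidentified the role of the strict-positivity hypothesis. The hypotheses of~(2) force $C$ to be \emph{unital}. Indeed, compactness of $\lambda([e_C])$ gives $\lambda([e_C])\le\lambda([f_\ep(e_C)])$ for small $\ep$; if $0$ were not isolated in ${\rm sp}(e_C)$ one could pick a nonzero $g(e_C)\perp f_\ep(e_C)$, and then $\lambda([g(e_C)])+\lambda([f_\ep(e_C)])\le\lambda([e_C])=\lambda([f_\ep(e_C)])$ forces $\lambda([g(e_C)])=0$, contradicting strict positivity. This is where the hypothesis $\lambda([c])\neq 0$ is actually used --- not to ``prevent collapse of the approximation''. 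Once $C$ is unital, $[1_C]\ll[1_C]$ gives $[\phi_k(1_C)]=\lambda([1_C])$ in ${\rm Cu}^\sim(\td B)$ for large $k$, so $\phi_k(1_C)$ is a projection of the correct ${\rm Cu}^\sim$-class. The paper then splits on whether $\lambda([e_0])=0$ for $e_0=1_{\td C}-1_C$: if so, simply set $\psi_k(1_{\td C})=\phi_k(1_C)$; if not, one has $d_\tau(\phi_k(1_C))<d_\tau(p)$ on $T(B)$, hence $\phi_k(1_C)\lesssim p$ in ${\rm Cu}(\td B)$ by Corollary~A.4 of \cite{eglnkk0} (continuity of $\hat p$), and a partial isometry conjugates $\phi_k$ into $pM_n(\td B)p$.

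Your proposed route through Theorem~\ref{LcomparisonU} and semiprojectivity is essentially Lemma~\ref{Llimintmaps}~(1)(i), which needs the strict inequality $\widehat{\lambda([e_C])}(\tau)<\widehat{[p]}(\tau)$ on $T(B)$. That inequality is available only when $\lambda([e_0])\neq 0$; in the sub-case $\lambda([e_0])=0$ one has $[q]=[p]$ and no strict gap on traces, so Theorem~\ref{LcomparisonU} does not apply and your scheme stalls. (Note $e_0\notin C$, so strict positivity says nothing about $\lambda([e_0])$.) Recognising that $C$ is unital removes the need for any of this machinery.
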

 
 \begin{proof}
 Consider case (2) first. 
 If $\lambda([e_C])$ is a compact element, then, 
 for all sufficiently small $0<\ep<1,$ 
 \beq
 \lambda([e_C])\le \lambda([f_\ep(e_C)])\le \lambda([e_C]).
 \eneq
 Let $g\in C_0((0,\|e_C\|])_+$ with the support in $(0, \ep/2].$ Then 
 $\lambda([g(e_C)])=0.$ 
 The assumption on $\lambda$ 
  implies that $g(e_C)=0.$
 It follows that $C$ is unital.  Since $[e_C]=[1_C]\ll [1_C],$ this implies 
 that $[\phi_k(1_C)]=\lambda([1_C])$ (for all large $k$).
 
 Let $e_0:=1_{\td C}-1_C.$ By Lemma \ref{Lproj}, we may assume that $b=p$
 for some projection $p\in M_n(\td B).$ If $\lambda([e_0])=0,$   then $\lambda([1_C])=\lambda([1_{\td C}]).$
 Define $\psi_k: \td C\to M_n(\td B)$ by ${\psi_k}|_{C}=\phi_k$  and 
 $\psi_k(1_{\td C})=\phi_k(1_C).$  (Warning: we only have $[\psi_k(1_{\td C})]+2[1_{\td B}]=
 [p]+2[1_{\td B}]$ in ${\rm Cu}(\td B)$ for large $k.$)
 
 If $\lambda([e_0])\not=0,$ then, for each $k,$ 
 \beq
 d_\tau(\phi_k(1_C))\le d_\tau(p)\rforal \tau\in T(\td B)\andeqn d_\tau(\phi_k(1_C))<d_\tau(p)\rforal \tau\in T(B).
 \eneq
 It follows from Corollary A.4 of \cite{eglnkk0} that  (since $\hat{p}$ is continuous on $T(B)$)
 \beq
 \phi_k(1_C)\lesssim p\,\,\,\hspace{0.2in} {{{\rm in}\,\,\, {\rm Cu}(\td B).}}
 \eneq
There is a partial isometry $v_k\in M_n(\td B)$ such that  $v_kv_k^*=\phi_k(1_C)$ and $v_k^*v_k\le p.$ 
Define $\psi_k: \td C\to pM_n(A)p$ by 
$\psi_k(c)=v_k^*\phi_k(c)v_k$ for all $c\in C$ and $\psi_k(1_{\td C})=p.$ 
Since $C$ is unital, ${\rm Cu}^\sim({\psi_k}|_{C})={\rm Cu}^\sim(\phi_k).$ It follows 
from  Lemma \ref{Lunitalext} that  (2)  holds.  
 
For (1), we assume that
 $\lambda([e_C])$ is not a compact element.   
 Again, by  Lemma \ref{Lproj} and the fact $1_{\td C}$ is a projection, we may assume $b=p$ is a projection.
 By Lemma \ref{Llimintmaps} ((ii) of (1)), we may assume that there is a sequence of \hm s $\phi_k$ which maps 
 $C$ into $\overline{pM_n(\td B)p}$    such that $\lim_{k\to\infty}{\rm Cu}^\sim(\phi_k)=\lambda|_{{\rm Cu}^\sim(C)}.$
 Define $\psi_k: \td C\to \overline{pM_n(\td B)p}$
 such that ${\psi_k}|_C=\phi_k$ and $\psi_k(1_{\td C})=p.$ Then  
 \beq\nonumber
 {\rm Cu}^\sim({\psi_k}|_{C})={\rm Cu}^\sim(\phi_k)\andeqn \,\,{\rm{(see\,\,\, Lemma\,\, \ref{Lunitalext})}}\,\,
 \lim_{k\to\infty}{\rm Cu}^\sim({\psi_k})=\lambda.
 \eneq
  \end{proof}

 The condition that $\lambda([c])\not=0$ for all $c\in C_+\setminus \{0\}$ may be called 
 ``strictly positive".
 
\begin{cor}\label{corunithm}
 Let $C$ be a \CA\, in ${\cal I}_0$  with a strictly positive element $e_C$ and $B$ be a simple \CA\, which satisfies conditions in \ref{151}.
 Suppose that $\lambda: {\rm Cu}^\sim (C)\to {\rm Cu}^\sim (\td B)$ is a morphism in ${\bf Cu}$ such that
 $\lambda([e_C])\le [e]$ for some nonzero projection $e\in M_n(\td B)$ (for some $n\in \N$).
 Suppose also that there exists a sequence of \hm s $\phi_k: C\to M_n(\td B)$
 such that $\lim_{k\to\infty}{\rm Cu}^\sim (\phi_k)=\lambda.$ 
 Then there exists a sequence of \hm s $\psi_k: \td C\to M_{n}(\td B)$
 such that 
 $$
 \lim_{k\to\infty}{\rm Cu}^\sim({\psi_k}|_{C})=\lambda.
 $$
 Moreover, if $\lambda([e_C])$ is not a compact element, then we may require that 
 $\psi_k(C)\subset eM_n(\td B)e.$
 \end{cor}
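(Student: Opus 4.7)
The plan is to reduce everything to Theorem \ref{Lunithm}(1), which does all the real work. First observe that the main existence claim, ignoring the \emph{moreover} clause, is immediate: define $\psi_k: \td C \to M_n(\td B)$ by $\psi_k(c + \af \cdot 1_{\td C}) = \phi_k(c) + \af \cdot 1_{M_n(\td B)}$ for $c \in C$ and $\af \in \C$. This is the canonical unital extension of $\phi_k$, and $\psi_k|_C = \phi_k$, so $\lim_{k\to\infty}{\rm Cu}^\sim(\psi_k|_C) = \lim_{k\to\infty}{\rm Cu}^\sim(\phi_k) = \lambda$. The substantive content is thus the \emph{moreover} clause, which requires in addition that $\psi_k(C) \subset eM_n(\td B)e$.

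Assume now that $\lambda([e_C])$ is not a compact element. The strategy is to extend $\lambda$ to a morphism $\lambda^\sim: {\rm Cu}^\sim(\td C) \to {\rm Cu}^\sim(\td B)$ with $\lambda^\sim([1_{\td C}]) = [e]$ and then invoke Theorem \ref{Lunithm}(1) with $b = e$. For the extension: since $\lambda([c]) \le \lambda([e_C]) \le [e]$ for every $c \in C_+^{\bf 1}$, the image of $\lambda$ lies in the natural image of ${\rm Cu}^\sim(eM_n(\td B)e)$ inside ${\rm Cu}^\sim(\td B)$ induced by the inclusion $eM_n(\td B)e \hookrightarrow M_n(\td B)$. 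Thus $\lambda$ factors as $\iota \circ \bar\lambda$ for a morphism $\bar\lambda: {\rm Cu}^\sim(C) \to {\rm Cu}^\sim(eM_n(\td B)e)$ with $\bar\lambda([e_C]) \le [1_{eM_n(\td B)e}] = [e]$.

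The corner $eM_n(\td B)e$ is unital with stable rank at most two (inherited from $M_n(\td B)$, which has stable rank at most two by the proof of Theorem 6.13 of \cite{RS}), and $C \in {\cal I}_0$ has stable rank one. Hence Lemma \ref{Lunitalext} applies to $\bar\lambda$, producing a unique extension $\bar\lambda^\sim: {\rm Cu}^\sim(\td C) \to {\rm Cu}^\sim(eM_n(\td B)e)$ with $\bar\lambda^\sim([1_{\td C}]) = [e]$. Setting $\lambda^\sim := \iota \circ \bar\lambda^\sim$ then gives an extension of $\lambda$ with $\lambda^\sim([1_{\td C}]) = [e]$, a compact element of ${\rm Cu}^\sim(\td B)$ since $e$ is a projection. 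Theorem \ref{Lunithm}(1), applied to this $\lambda^\sim$ with $b = e$ and the given sequence $\phi_k$ (which by hypothesis satisfies $\lim_k {\rm Cu}^\sim(\phi_k) = \lambda = \lambda^\sim|_{{\rm Cu}^\sim(C)}$), yields \hm s $\psi_k: \td C \to eM_n(\td B)e$ with $\lim_k {\rm Cu}^\sim(\psi_k) = \lambda^\sim$; restriction to $C$ then gives $\lim_k {\rm Cu}^\sim(\psi_k|_C) = \lambda$, and $\psi_k(C) \subset eM_n(\td B)e$ holds automatically. The one point requiring care is the verification that Lemma \ref{Lunitalext} applies here (the factorization of $\lambda$ through $eM_n(\td B)e$ and the finite stable rank of that corner), both immediate from the hypotheses and from the structure of $\td B$ recalled in \ref{151}.
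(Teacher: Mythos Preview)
Your first paragraph (the main existence claim via the canonical unital extension $\psi_k(1_{\td C})=1_{M_n(\td B)}$) is correct and is exactly what the paper does.

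For the \emph{moreover} clause, your overall plan---extend $\lambda$ to $\lambda^\sim$ with $\lambda^\sim([1_{\td C}])=[e]$ and invoke Theorem~\ref{Lunithm}(1)---matches the paper's, but two steps are not justified. First, the factorization ``$\lambda=\iota\circ\bar\lambda$ through ${\rm Cu}^\sim(eM_n(\td B)e)$'' is argued only for classes $[c]$ with $c\in C_+^{\bf 1}$; elements of ${\rm Cu}^\sim(C)$ are formal differences $[a]-m[1_{\td C}]$, and it is not clear from $\lambda([c])\le [e]$ alone that such differences land in the image of ${\rm Cu}^\sim(D)$ inside ${\rm Cu}^\sim(\td B)$, nor that this image is even an order-embedded copy allowing a $\bf Cu$-morphism factorization. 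Second, the claim that $D=eM_n(\td B)e$ has stable rank at most two ``inherited from $M_n(\td B)$'' is not valid: stable rank is not inherited by corners in general, so the hypothesis of Lemma~\ref{Lunitalext} is not verified.

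The paper closes both gaps with a single observation you omitted: since $B$ is simple and stably projectionless, any nonzero projection $e\in M_n(\td B)$ is automatically \emph{full} in $M_n(\td B)$ (the ideal it generates contains $B$, hence everything). Therefore $D\otimes{\cal K}\cong \td B\otimes{\cal K}$, and by Theorem~5.5 of \cite{RS} the inclusion $D\hookrightarrow M_n(\td B)$ induces an \emph{isomorphism} ${\rm Cu}^\sim(D)\cong{\rm Cu}^\sim(\td B)$. This makes your factorization step trivially correct, and it transfers to ${\rm Cu}^\sim(D)$ whatever ${\rm Cu}^\sim$-level structure Lemma~\ref{Lunitalext} needs from finite stable rank of $\td B$---so the extension $\bar\lambda^\sim$ with $\bar\lambda^\sim([1_{\td C}])=[1_D]=[e]$ exists, and Theorem~\ref{Lunithm}(1) applies as you intended. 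Add that fullness argument and your proof goes through.
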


 \begin{proof}
 Define ${\psi_k}|_C=\phi_k$ and $\psi_k(1_{\td C})=1_n.$ Then the first part of the statement 
 follows.
 For the second part, we note that, 
 since $B$ is simple and stably proectionless, and $e\in M_n(\td B)$  is a nonzero projection, 
 $e$ is a full element in $M_n(\td B).$ It follows that $eM_n(\td B)e\otimes {\cal K}\cong \td B\otimes {\cal K}.$
 Put $D=eM_n(\td B)e.$
 By  theorem
 5.5 of \cite{RS}, ${\rm Cu}^\sim(D)\cong {\rm Cu}^\sim (\td B).$ 
 Then the second
 part of the corollary follows from 
 part (1) of Theorem \ref{Lunithm}.
 
 %
%
 \end{proof}

\begin{exm}\label{Counexm}
By Theorem 5.27 of \cite{GLIII}, there is a separable simple stably projectionless \CA\, $A$ with nontrivial $K_0(A)$  and with 
continuous scale such that
 ${\rm ker}\rho_A=K_0(A)$  and  $A=\lim_{n\to\infty}(C_n, \phi_n),$
where $C_n\in {\cal I}_0$ and $\phi_{n, \infty}: C_n\to C$ is injective. 
Choose $C_n$ so that ${\phi_{n, \infty}}_{*0}(K_0(C_n))\not=0.$ This also implies that $K_0(C_n)\not=\{0\}.$
Note  that, since $A$ is stably projectionless,  $C_n$ is also stably projectionless. 

Let $B:=A\otimes {\cal W},$ where ${\cal W}$ is the unique separable amenable $KK$-contractible \CA\, 
with a unique tracial state (see \cite{eglnkk0}). Then $B$ has continuous scale and $T(B)=T(A),$
and $B$ is $KK$-contractible. By the classification theorem in \cite{eglnkk0}, $B$ is in fact 
a simple  inductive limit of Razak algebras.   Then, by Proposition 6.2.3 of \cite{Rl}, 
\beq\nonumber
{\rm Cu}^\sim(B) ={{\{0\}}}\sqcup \LAff_+^\sim (T(B))=\{0\}\sqcup \LAff_+^\sim(T(A))\andeqn {\rm Cu}^\sim(A)=K_0(A)\sqcup \LAff_+^\sim(T(A)).
\eneq
By Theorem 1.0.1 of \cite{Rl}, there is a \hm\, $j: A\to B$ such that 
$$
{\rm Cu}(j)|_{K_0(A)}=0\andeqn
{\rm Cu}^\sim(j)|_{\LAff_+^\sim(T(A))}={\rm id}_{\LAff_+^\sim(T(A))}.
$$ 
There is also 
a \hm\, 
$
\iota: B\to A$ such that ${\rm Cu}^\sim(\iota)|_{\LAff_+^\sim(T(B))}={\rm id}|_{\LAff_+^\sim(T(B))}.
$
Let $\psi:=\iota\circ j\circ \phi_{n, \infty}:C_n\to A.$   Note  ${\rm Cu}^\sim(\iota\circ j)|_{\LAff_+^\sim(T(A))}=
\id_{\LAff_+^\sim(T(A))}.$
Since $C_n$ is stably projectionless, 
one has $${\rm Cu}(\psi)={\rm Cu}(\phi_{n, \infty}).$$
But,  since $\psi_{*0}=0$ and ${\phi_{n, \infty}}_{*0}\not=0,$
$${\rm Cu}^\sim(\psi)\not={\rm Cu}^\sim(\phi_{n, \infty})\andeqn  
{\rm Cu}(\psi^\sim)\not={\rm Cu}(\phi_{n, \infty}^\sim).$$

\end{exm}

 \section{Existence}
 
 
 \begin{lem}\label{LembedingC0}
 Let $A$ be a separable simple stably projectionless  \CA\, 
 with continuous scale such 
 that $M_m(A)$ has almost stable rank one for all $m\ge 1.$
 Suppose also  $QT(A)=T(A)$ and ${\rm Cu}(A)=\LAff_+(T(A)).$ 
 
 Then, for any ${\bf Cu}$ morphism $\lambda: {\rm Cu}^\sim (C_0((0,1])))\to {\rm Cu}^\sim(\td A)$
 with $\lambda([e_C])\le [a]$ for some $a\in M_n(\td A)_+$ (for some integer $n\ge 1$),
  where 
 $e_C$ is a strictly positive element of $C_0((0,1]),$ and $\lambda([c])\not=0$ for any $c\in C_0\otimes {\cal K}_+\setminus \{0\},$  
 there is a \hm\, $h: C_0((0,1])\to M_n(\td A)$   such that 
 ${\rm Cu}^\sim(h)=\lambda.$
 
 Moreover, if $\lambda: {\rm Cu}^\sim(C_0((0, 1])))\to {\rm Cu}^\sim (A)$ with $\lambda([e_C])\le [a]$
 for some $a\in M_n(A)_+,$ then there exists a \hm\, $h: C_0((0,1]))\to M_n(A)$
 such that ${\rm Cu}^\sim(h)=\lambda.$ 
 
  \end{lem}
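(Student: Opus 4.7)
My plan is to construct a single positive contraction $b\in M_n(\td A)_+$ with $b\in{\rm Her}(a)$ and $\|b\|\le 1$, and then define $h(f):=f(b)$ for $f\in C_0((0,1])$; this automatically yields a $*$-homomorphism since $C_0((0,1])$ is generated by $e_C$ as a $C^*$-algebra. The induced map ${\rm Cu}^\sim(h)$ sends $[f_\ep(e_C)]\mapsto[f_\ep(b)]$, so the task reduces to finding $b$ whose spectral profile matches $\ep\mapsto\lambda([f_\ep(e_C)])$ in ${\rm Cu}^\sim(\td A)$.

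First I would rule out the case that $\lambda([e_C])$ is compact in ${\rm Cu}^\sim(\td A)$. If it were compact, then $\lambda([f_\ep(e_C)])\nearrow\lambda([e_C])$ would force $\lambda([f_\ep(e_C)])=\lambda([e_C])$ for some sufficiently small $\ep>0$, and by Lemma \ref{Lproj} I could take $\lambda([e_C])=[p]$ for a projection $p$. Picking a non-zero $g\in C_0((0,1])_+$ supported in $(0,\ep/4)$ gives $g\perp f_\ep(e_C)$, hence $[g]+[f_\ep(e_C)]\le[e_C]$; applying $\lambda$ yields $\lambda([g])+[p]\le[p]$. Using the description of ${\rm Cu}(\td A)^\circeq$ from Lemma \ref{TcomparisonintdA} together with the fact that $V(\td A)=\N[1_{\td A}]$ (since $A$ is stably projectionless and $\td A$ is finite), plus the strict comparison in Theorem \ref{TBtildC}, I would conclude $\lambda([g])=0$, contradicting the strict positivity of $\lambda$.

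With $\lambda([e_C])\in\LAff_+(T(\td A))^\diamond$ non-compact, I would construct $b$ spectrally. Put $\sigma_k:=\lambda([f_{1/2^k}(e_C)])$, so $\sigma_k\nearrow\lambda([e_C])$. Using Lemma \ref{TcomparisonintdA} (ultimately Theorem A.6 of \cite{eglnkk0}) together with ${\rm Cu}(A)=\LAff_+(T(A))$, I would inductively produce mutually orthogonal elements $d_k\in M_n(\td A)_+\cap{\rm Her}(a)$ such that $[d_1\oplus\cdots\oplus d_k]$ realizes $\sigma_k$; the orthogonality is arranged via Lemma \ref{LuniH} combined with almost stable rank one of $M_m(A)$, while the containment in ${\rm Her}(a)$ uses Theorem \ref{LcomparisonU}, whose auxiliary unitaries compensate for the absence of full cancellation in $\td A$. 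Setting $b:=\sum_{k\ge 1}2^{-k}d_k$, the orthogonality ensures norm convergence and well-behaved functional calculus, so that $f_{1/2^k}(b)$ is essentially $d_1\oplus\cdots\oplus d_k$, giving $[f_{1/2^k}(b)]=\sigma_k$; monotonicity of both sides interpolates this to arbitrary $\ep>0$.

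The homomorphism $h(f):=f(b)$ then satisfies ${\rm Cu}^\sim(h)=\lambda$ by construction. For the moreover part, where $\lambda$ takes values in ${\rm Cu}^\sim(A)$ and $a\in M_n(A)_+$, the same strategy runs inside $M_n(A)$: no compactness obstruction arises because $A$ is stably projectionless, ${\rm Cu}(A)=\LAff_+(T(A))$ directly provides realizations of the $d_k$'s, and orthogonal arrangement again uses Lemma \ref{LuniH} and the almost stable rank one of $M_n(A)$. The main obstacle in either case is realizing the orthogonal increments $d_k$ as exact class representatives rather than merely up to the $\circeq$ equivalence of Lemma \ref{TcomparisonintdA}, and this is precisely what Theorem \ref{LcomparisonU} and the weak cancellation discussed in Remark \ref{Rcomparison} are designed to handle.
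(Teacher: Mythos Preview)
Your approach differs substantially from the paper's, and the spectral construction step contains a real gap.

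\textbf{The gap.} You set $b:=\sum_{k\ge 1}2^{-k}d_k$ with mutually orthogonal contractions $d_k$ and claim that ``$f_{1/2^k}(b)$ is essentially $d_1\oplus\cdots\oplus d_k$''. This is not correct. Since the summands are orthogonal, $(b-2^{-k})_+=\sum_{j\ge 1}(2^{-j}d_j-2^{-k})_+=\sum_{j\ge 1}2^{-j}(d_j-2^{j-k})_+$, so the Cuntz class of $f_{1/2^k}(b)$ is $\sum_{j<k}[(d_j-2^{j-k})_+]$, not $\sum_{j\le k}[d_j]$. Each term $[(d_j-2^{j-k})_+]$ is in general strictly smaller than $[d_j]$, and the $j=k$ term vanishes entirely. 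The usual fix---taking the $d_j$ to be projections---is unavailable here because $A$ is stably projectionless and $\td A$ has only scalar projections. Arranging the spectra of the $d_j$ so that these cutdowns do not change the class is exactly the kind of control that is difficult without stable rank one. You invoke Theorem~\ref{LcomparisonU} and Remark~\ref{Rcomparison} to handle cancellation, but neither result lets you realize a prescribed decreasing path $\ep\mapsto\lambda([(e_C-\ep)_+])$ in ${\rm Cu}(\td A)$ as the spectral profile of a single element; that realization problem is precisely what Robert's theorem solves, and it needs stable rank one.

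\textbf{What the paper does instead.} The paper sidesteps the realization problem entirely. After your same non-compactness argument (so $\lambda({\rm Cu}(C_0))\subset\LAff_+(T(\td A))^\diamond$), it invokes Theorem~2.8 of \cite{eglnkk0} to produce a separable simple \CA\ $A_1$ that is an inductive limit of Razak algebras with $T(A_1)=T(A)$, and Theorem~A.26 of \cite{eglnkk0} to embed $\iota:A_1\hookrightarrow A$ inducing the identification of tracial simplices. Since $A_1^\sim$ has stable rank one, Robert's Theorem~1.0.1 applies directly to give $\phi:C_0\to M_n(\td A_1)$ with ${\rm Cu}^\sim(\phi)=(\iota^{\sharp\sim})^{-1}\circ\lambda$, and then $h:=\iota^\sim\circ\phi$ works. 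The ``Moreover'' part is handled the same way (or by \cite{Rlz}). So rather than building $b$ by hand inside $\td A$, the paper transports the whole problem to a model algebra where the existing classification machinery applies.
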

 
 \begin{proof}
 Recall that $A$ shares the same condition that $B$ has in \ref{151}.
 Put $C_0:=C_0((0,1]).$ Recall that $K_i(C_0)=\{0\},$ $i=0,1.$ 
 Note that, since $C_0$ has stable rank one, ${\rm Cu}(C_0)$ is orderly embedded into 
 ${\rm Cu}^\sim(C_0).$   So ${\rm Cu}^\sim(C_0)_+={\rm Cu}(C_0)$  (see Lemma 3.1.2 of \cite{Rl}).
 Note  also that $\td A$ is unital  and has stable rank at most 2 (see the proof of Theorem 6.13 of \cite{RS}).
 Thus $\lambda$ maps ${\rm Cu}(C_0)$ to ${\rm Cu}(\td A)^{\circeq}$ (see Lemma \ref{TcomparisonintdA}
 and Corollary \ref{CtdBcomp}).
 Therefore it suffices to show that there is a \hm\, $h: C_0\to M_n(\td A)$ 
 such that ${\rm Cu}(h)=\lambda|_{{\rm Cu}(C_0)}.$ 
 
 Recall, by Theorem \ref{TcomparisonintdA},  that
$ {\rm Cu}(\td A)^{\circeq}=(K_0(\td A)_+\setminus \{0\})\sqcup\LAff_+(T(\td A))^\diamond.$
 %
 Suppose that $\lambda([c])$ is compact
 for some non-zero $c\in (C_0\otimes {\cal K})_+.$   Note 
 $[c]=\sup\{[f_{1/2^n}(c)]: n\in\N\}.$ It follows, 
 for some $n\ge1,$ 
 \beq\label{LembedingC0-n1}
 \lambda([c])\le \lambda([f_{1/2^n}(c)]).
 \eneq
 However, since $C_0$ is stably projectionless,  there is $c_0\in {\rm Her}(c)_+\setminus\{0\}$
 such that $c_0\perp f_{1/2^n}(c).$ By the assumption, $\lambda([c_0])\not=0.$
 This contradicts with \eqref{LembedingC0-n1} as $C_0$ has stable rank one.
 Hence $\lambda([c])$ is not compact for any 
 $[c]\in C_0\otimes {\cal K}_+\setminus \{0\}.$ 
 
Thus
  $\lambda({\rm Cu}(C_0))\subset \LAff_+(T(\td A))^\diamond$ (see Theorem 6.1 of \cite{RS}).

 It follows from Theorem 2.8 of \cite{eglnkk0}  that there is a separable simple 
 \CA\, $A_1$ which is an inductive limit of Razak algebras  with continuous scale such that $T(A_1)=T(A).$
 Note that $K_i(A_1)=\{0\},$ $i=0,1.$ 
 By Theorem A.26 of \cite{eglnkk0}, there is an embedding $\iota: A_1\to A$ which maps strictly positive elements 
 to  strictly positive elements such that $\iota$ induces an affine homeomorphism 
 $\iota_T: T(A)\to T(A_1).$  
 Let $\iota^\sim: (A_1)^\sim\to \td A$ 
 be the unital extension.   We also write  $\iota^\sim$ for 
 the extension from $M_n(A_1^\sim)$ to $M_n(\td A)$
 for each integer $n\ge 1.$  Thus $\iota^\sim$ induces an isomorphism 
 $\iota^{\sharp\sim}$ form $\LAff_+(T(A_1^\sim))$ onto $\LAff_+(T(\td A)).$ 
 Since ${A_1}^\sim$ has stable rank one, 
 it follows from Theorem 1.0.1 of \cite{Rl} that there is a 
 \hm\, $\phi:C\to M_n(\td A_1)$ such that 
 \beq
 {\rm Cu}^\sim(\phi)=(\iota^{\sharp\sim})^{-1}\circ \lambda.
 \eneq
 Define $h: C\to A$ by $h=\iota^\sim\circ \phi.$   Then ${\rm Cu}^\sim(h)=\lambda.$

 %
 For the ``Moreover"  part,  we first note that  ${\rm Cu}(A)=\LAff_+(T(A)),$ as $A$ is stably projectionless.
 Therefore, working in ${\rm Cu}(A),$ the above argument also works and produces 
 a \hm\, $\phi: C\to M_n(A)$ such that ${\rm Cu}(\phi)=\lambda.$  This part also follows from \cite{Rlz}.
 %
 %
 %
 
 \end{proof}

 \begin{df}
 Let ${\cal A}_0$ be the family of \CA s in ${\cal I}_0$ which   consists  of one  \CA\, $C_0((0,1]).$
 A \CA\, $A$ is  in ${\cal A}_n$ if $A\in {\cal I}_0$ and, if $A\otimes {\cal K}\cong B\otimes {\cal K},$  or, if $A=\td B,$ 
 or if $\td A=B$ 
 for some 
 $B\in {\cal A}_{n-1},$ $n=1,2,....$ 
  \end{df}
 
 \begin{thm}\label{TTEXTC0}
 Let $C\in {\cal I}_0$  be a \CA\,
 and let $A$ be a separable simple stably projectionless  \CA\, 
 with continuous scale such 
 that $M_m(A)$ has almost stable rank one for all $m\ge 1.$
 Suppose also $QT(A)=T(A)$ and ${\rm Cu}(A)=\LAff_+(T(A)).$ 
Let $e_C$ and $e_A$ be strictly positive elements of $C$ and $A,$ respectively.
 
 
 (1) Suppose that  there is a morphism $\lambda: {\rm Cu}^\sim(C)\to {\rm Cu}^\sim(A)$ in ${\bf Cu}$
 such that $\lambda([e_C])\le n[e_A]$   for some integer $n\ge 1$ and $\lambda([c])\not=0$
 for any $c\in C_+\setminus \{0\}.$
 Then there is an integer $m\ge n$ and a sequence of \hm s $\phi_k: C\to M_m(A)$ such that
 $\lim_{k\to\infty}{\rm Cu}^\sim(\phi_k)=\lambda.$  
 
 (2) Also, if there is a morphism 
 $\lambda: {\rm Cu}^\sim(C)\to {\rm Cu}^\sim (\td A)$ in {\bf Cu} such that $\lambda([e_C])\le n[1_{\td A}]$
 and $\lambda([c])\not=0$ for all $c\in (C\otimes {\cal K})_+
 \setminus \{0\},$ 
 then there exists an integer $m\ge n$ and a sequence of  \hm s $\phi_k: C\to M_m(\td A)$  such that 
 $\lim_{k\to\infty}{\rm Cu}^\sim (\phi_k)=\lambda.$ 
 \end{thm}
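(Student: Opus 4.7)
The plan is to induct on the integer $n$ for which $C\in{\cal A}_n$; since every $C\in{\cal I}_0$ lies in some ${\cal A}_n$ (by the reduction chain $C_0((0,1])=C_0,C_1,\ldots,C_n=C$ of L.~Robert recalled in the introduction), this exhausts the class. The base case $C=C_0((0,1])$ is immediate from Lemma \ref{LembedingC0}: its ``Moreover'' clause handles part (1) and its main part handles part (2), producing a single \hm\ $h$ with ${\rm Cu}^\sim(h)=\lambda$ so that the approximating sequence may be taken constant. The strictly positive hypothesis on $\lambda$ supplies the nondegeneracy required in that lemma.

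For the inductive step, I would assume the theorem for every $D\in{\cal A}_{n-1}$ and treat the three subcases corresponding to the definition of ${\cal A}_n$. In the stable isomorphism subcase ($\phi:D\otimes{\cal K}\cong C\otimes{\cal K}$), I transfer $\lambda$ to $\lambda_D:=\lambda\circ{\rm Cu}^\sim(\phi)$ on ${\rm Cu}^\sim(D)$, invoke the inductive hypothesis to obtain $\psi_k:D\to M_{m'}(A)$ (resp.\ $M_{m'}(\td A)$), and then use Lemma \ref{LheredC0} (applied with $\phi^{-1}$) to produce an injective \hm\ $\iota:C\to M_j(D)$ realizing ${\rm Cu}^\sim(\phi^{-1})$; the composition $\psi_k^{(j)}\circ\iota$, where $\psi_k^{(j)}$ denotes the $j$-fold amplification, is the desired sequence. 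In the unitization subcase ($C=\td D$ unital), part (1) is vacuous because any \hm\ $\td D\to M_m(A)$ would send $1_C$ to a projection in the stably projectionless $M_m(A)$, forcing zero and contradicting strict positivity; for part (2), I would apply the inductive hypothesis to $D$ with $\lambda|_{{\rm Cu}^\sim(D)}$ and then extend the resulting $\psi_k:D\to M_{m'}(\td A)$ to $\td D$ via Theorem \ref{Lunithm}.

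The delicate subcase is deunitization, $\td C=D\in{\cal A}_{n-1}$ with $C$ non-unital. For part (2), I would invoke Lemma \ref{Lunitalext} with unital target $M_n(\td A)$ (which has finite stable rank) to extend $\lambda$ to $\lambda^\sim:{\rm Cu}^\sim(D)\to{\rm Cu}^\sim(\td A)$ with $\lambda^\sim([1_{\td C}])=n[1_{\td A}]$, apply the inductive hypothesis to $D$ and $\lambda^\sim$, and then restrict the resulting \hm s $\Psi_k:D\to M_{m'}(\td A)$ to $C$. Part (1) requires one further step: having obtained $\Psi_k:C\to M_{m'}(\td A)$ by the same route (composing $\lambda$ with the inclusion ${\rm Cu}^\sim(A)\hookrightarrow{\rm Cu}^\sim(\td A)$), I would use Lemma \ref{Llimintmaps}(1) with an element $b\in M_{n+1}(A)_+$ chosen so that either $\hat b>\widehat{\lambda([e_C])}$ pointwise on $T(A)$ or $\lambda([e_C])$ is non-compact, in order to compress the sequence into $\overline{bM_{n+1}(\td A)b}\subset M_{n+1}(A)$; the last inclusion holds because $M_{n+1}(A)$ is an ideal of $M_{n+1}(\td A)$ and $b\in M_{n+1}(A)$.

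The principal obstacle is precisely this last compression step in case (c), part (1). After unitizing the target to leverage the inductive hypothesis and Lemma \ref{Lunitalext}, the \hm s produced naturally live in $M_m(\td A)$, and pulling them back into $M_m(A)$ is possible only because of the strict comparison and weak cancellation in ${\rm Cu}(\td A)$ proven in Section~4 (via Theorem \ref{LcomparisonU}), which underlie Lemma \ref{Llimintmaps}. The absence of genuine cancellation in ${\rm Cu}^\sim(\td A)$ is precisely why one obtains an approximating sequence rather than a single realizing \hm, and navigating this absence carefully---together with verifying that the inductive transfer of $\lambda$ in the stable isomorphism case preserves both strict positivity (extended from $C_+$ to $(C\otimes{\cal K})_+$) and the quantitative bound (after enlarging $n$)---is the crux of the argument.
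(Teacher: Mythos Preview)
Your proposal is correct and follows essentially the same inductive scheme as the paper: induction on the level $n$ in $\mathcal{A}_n$, with Lemma~\ref{LembedingC0} as base case and the three subcases (stable isomorphism, unitization, deunitization) handled via Lemma~\ref{LheredC0}, Theorem~\ref{Lunithm}, and Lemma~\ref{Lunitalext} respectively, exactly as the paper does.

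One point worth noting: in the deunitization case $\td C=D\in\mathcal{A}_{n-1}$ for part~(1), the paper simply views $\lambda$ as landing in ${\rm Cu}^\sim(\td A)$, applies the inductive hypothesis for part~(2), and restricts the resulting $h_k:\td C\to M_L(\td A)$ to $C$---stopping there. As written this yields $\phi_k:C\to M_L(\td A)$ rather than $M_m(A)$, so the compression step you describe (via Lemma~\ref{Llimintmaps}(1)(ii), using that $\lambda([e_C])$ is non-compact in the stably projectionless target and choosing $b=\mathrm{diag}(e_A,\ldots,e_A)\in M_n(A)_+$) is genuinely needed to complete the statement of part~(1); you are being more careful here than the paper itself. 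The paper's downstream application (Corollary~\ref{CEXTmaps}) could absorb this omission by invoking Lemma~\ref{Llimintmaps}(1) in place of~(2), but for the theorem as stated your extra step is required. The minor discrepancy $\lambda^\sim([1_{\td C}])=n[1_{\td A}]$ versus the paper's $(n+1)[1_{\td A}]$ is immaterial: either choice satisfies the hypothesis of Lemma~\ref{Lunitalext}.
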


 \begin{proof}
 It follows from (the proof of) Proposition 5.2.2  of \cite{Rl} that, $C\in {\cal A}_m$ for some $m\ge 0.$ 
 By Lemma \ref{LembedingC0}, the lemma holds for any  \CA\, $C\in {\cal A}_0$  and any $A$ which meets the requirement of the lemma.  
 
 Assume that lemma holds for  any \CA\, $C$ in ${\cal A}_{m-1}.$ It suffices to show that the lemma holds 
 for any \CA\,  $C$ in  ${\cal A}_m$ and any $A$ as described in the lemma. 
 Fix $C\in {\cal A}_m.$ 
 
 
 Case (I) :  Suppose that $h: C\otimes {\cal K}\to B\otimes {\cal K}$ is an isomorphism for some $B\in {\cal A}_{m-1}.$
 In situation (2), suppose that $\lambda: {\rm Cu}^\sim(C)\to {\rm Cu}^\sim(\td A)$ is a morphism in ${\bf Cu}$
 such that $\lambda([e_C])\le n[1_{\td A}].$ 

By Lemma \ref{LheredC0}, there is an injective \hm\, $\iota: h(C)\to M_L(B)$ for 
some integer $L\ge 1$ such that ${\rm Cu}^\sim(\iota)={\rm Cu}^\sim(\id_{h(C)}).$
Since $B\in {\cal A}_{m-1},$ by the inductive assumption, there exists an integer $m_0\ge n$ and a 
sequence of \hm s $\psi_k: M_L(B)\to M_{Lm_0}(\td A)$ such 
that 
$$
\lim_{k\to\infty}{\rm Cu}^\sim(\psi_k)=\lambda\circ {\rm Cu}^\sim(h^{-1}).
$$
Define $\phi_k: C\to M_{Lm_0}(\td A)$ by
$\phi_k(c)=\psi_k\circ \iota\circ h(c)$ for all $c\in C.$ It follows that
$$
\lim_{k\to\infty}{\rm Cu}^\sim(\phi_k)=\lambda.
$$
In situation (1), $\lambda$ maps ${\rm Cu}^\sim(C)$ to ${\rm Cu}^\sim(A),$ then the argument above also works
(but $\psi_k$ maps $M_L(B)$ into $M_{Lm_0}(A)$).

 Case (II):  $C=\td B$ for some $B\in {\cal A}_{m-1}.$   Note that $C$ is unital
 and $A$ is stably projectionless.
 Hence $\lambda: {\rm Cu}^\sim(C)\to {\rm Cu}^\sim(\td A).$  (We do not need 
 to consider the case $\lambda: {\rm Cu}^\sim \to {\rm Cu}^\sim (A).$)
 Let $e_B\in B$ be a strictly positive element. 
 Note that ${\rm Cu}^\sim(B)$ is orderly embedded into ${\rm Cu}^\sim(\td B)$
 (see Proposition 3.1.6 of \cite{Rl}).
 Since $B\in {\cal A}_{m-1}$ and $\lambda([e_B])\le \lambda([e_C])\le n[1_{\td A}],$
 by the inductive assumption, there is an integer $m_0\ge n$ and a sequence of \hm s $\psi_k: B\to M_{m_0}(\td A)$
 such that 
 \beq
 \lim_{k\to\infty}{\rm Cu}^\sim(\psi_k)=\lambda|_{{\rm Cu}^\sim(B)}.
 \eneq
 If $\lambda([e_B])$ is not compact, we apply part (1) of Theorem  \ref{Lunithm} to obtain the desired 
 maps $\phi_k.$
 If $\lambda([e_B])$ is compact, 
 since $\lambda$ is strictly positive, by   (2) of 
 Theorem \ref{Lunithm}, there is also a sequence of \hm s $\phi_k: C=\td B\to M_{nm_0}(\td A)$
 such that 
 \beq
 \lim_{k\to\infty}{\rm Cu}^\sim(\phi_k)=\lambda.
 \eneq

Case (3): $\td C=B$ for some $B\in {\cal A}_{m-1}.$
Let $\lambda: {\rm Cu}^\sim(C)\to {\rm Cu}^\sim (\td A)$ be such that $\lambda([e_C])\le n[1_{\td A}].$
 By Lemma \ref{Lunitalext},  there is an extension $\lambda^\sim: {\rm Cu}^\sim (\td C)\to {\rm Cu}^\sim(\td A)$ in 
 ${\bf Cu}$ such that $\lambda^\sim|_{{\rm Cu}^\sim(C)}=\lambda$ and 
 $\lambda(1_{\td C})=(n+1)[1_{\td A}].$  
 Consider the following splitting short exact sequence (see Proposition 3.1.6 of \cite{Rl}):
 \beq
 0\to {\rm Cu}^\sim(C)\to {\rm Cu}^\sim(\td C)\stackrel{{\rm Cu}^\sim(\pi_\C^C)}{\to} {\rm Cu}^\sim(\C)\to 0,
 \eneq
 where $\pi_\C^C: \td C\to \C$ is the quotient map (and its extension).
 Let $a\in  (\td C\otimes {\cal K})_+\setminus \{0\}.$ If $\pi_\C^C(a)=0,$
 then $\lambda^\sim([a])=\lambda([a])\not=0.$
 If $\pi_\C^C(a)\not=0,$ then, by the definition, 
 $\lambda^\sim([a])\not=0.$ Thus $\lambda^\sim([a])\not=0$
 for any $a\in (\td C\otimes {\cal K})_+\setminus \{0\}.$
 Since $B\in {\cal A}_{m-1},$  by the assumption,  there exists a sequence of \hm s $h_k: B=\td C\to M_{L}(\td A)$
 for some $L\ge n$
 such that  $\lim_{k\to\infty}{\rm Cu}^\sim(h_k)=\lambda^\sim.$
 Choose $\phi_k:={h_k}|_{C}.$ Then 
 $\lim_{k\to\infty}{\rm Cu}^\sim(\phi_k)=\lambda.$
 
If $\lambda: {\rm Cu}^\sim (C)\to {\rm Cu}^\sim(A)$ with $\lambda([e_C])\le n[e_A],$ 
 then, since ${\rm Cu}^\sim(A)\to {\rm Cu}^\sim (\td A)$ is an order embedding,  by Theorem 5.3 of \cite{RS}, 
 one may view $\lambda:  {\rm Cu}^\sim (C)\to {\rm Cu}^\sim(\td A).$ 
 It follows from Lemma  \ref{Lunitalext}, there is an extension $\lambda^\sim: {\rm Cu}^\sim(\td C)\to 
 {\rm Cu}^\sim (\td A)$ such that $\lambda^\sim|_{{\rm Cu}^\sim(C)}=\lambda$ and 
 $\lambda^\sim([1_{\td C}])=(n+1)[1_{\td A}].$  As proved above, $\lambda^\sim$ is strictly positive, i.e., $\lambda^\sim([c])\not=0$ for any $c\in (\td C\otimes {\cal K})_+\setminus \{0\}.$
 Since $B\in {\cal A}_{m-1},$ there exists a sequence of \hm s 
 $h_k: B=\td C\to M_L(\td A)$ such that $\lim_{k\to\infty}{\rm Cu}^\sim(h_k)=\lambda^\sim.$
 Define $\phi_k={h_k}|_{C}.$ 
 Then $\lim_{k\to\infty}{\rm Cu}^\sim(\phi_k)=\lambda.$
 
 %
 %
 %
 %
 %

 This completes the induction. Theorem follows.
 \end{proof}
 
 \begin{cor}\label{CEXTmaps}
 Let $C\in {\cal I}_0$ be a \CA\, with a strictly positive element $e_C$ 
 and let $A$ be a finite separable simple  stably projectionless \CA\, which 
 is ${\cal Z}$-stable with continuous scale such that $QT(A)=T(A).$
  Suppose that  there is a morphism $\lambda: {\rm Cu}^\sim(C)\to {\rm Cu}^\sim(A)$ in ${\bf Cu}$
 such that $\lambda([e_C])\le [a]$   for some $a\in A_+$ and $\lambda([c])\not=0$
 for all $c\in C_+\setminus \{0\}.$  
 Then 
 there exists  a sequence of  \hm s $\phi_k: C\to
 \overline{aAa}$ 
 such that 
 $\lim_{k\to\infty}{\rm Cu}^\sim (\phi_k)=\lambda.$ 
 
 Moreover,  there exists a sequence of injective \hm s $\phi_k: C\to 
 \overline{aAa}$ 
 such that
 $\lim_{k\to\infty}^w {\rm Cu}^\sim (\phi_k)=\lambda.$
\end{cor}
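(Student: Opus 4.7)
The plan is a two-step chain: Theorem~\ref{TTEXTC0}(1) produces a sequence $\phi_k': C \to M_m(A)$, and Lemma~\ref{Llimintmaps}(2) pushes it down into $\overline{aAa}$. First I verify that $A$ meets the standing hypotheses of Theorem~\ref{TTEXTC0}: since $A$ is $\mathcal{Z}$-stable and stably projectionless, each $M_m(A)$ is also $\mathcal{Z}$-stable and stably projectionless, so Theorem~\ref{astrk1z}(3) gives almost stable rank one for $M_m(A)$; combined with strict comparison and stable projectionlessness one has $\mathrm{Cu}(A) = \LAff_+(T(A))$ (see \ref{151}). Fixing a strictly positive element $e_A \in A$, strict comparison gives $[a] \le [e_A]$ since $A$ is simple and $a \in A_+$, hence $\lambda([e_C]) \le [a] \le [e_A]$.

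Applying Theorem~\ref{TTEXTC0}(1) with $n = 1$ yields an integer $m \ge 1$ and a sequence of \hm s $\phi_k': C \to M_m(A)$ with $\lim_{k} \mathrm{Cu}^\sim(\phi_k') = \lambda$. Viewing $a$ as sitting in the upper-left corner of $M_m(A)$, so that $\overline{a M_m(A) a} = \overline{aAa}$, Lemma~\ref{Llimintmaps}(2) (with $B = A$, $N = m$, $b = a$) feeds on exactly the data $\lambda$ and $\phi_k'$ just produced, and outputs the desired sequence $\phi_k: C \to \overline{aAa}$ with $\lim_{k} \mathrm{Cu}^\sim(\phi_k) = \lambda$, settling the main claim.

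For the moreover, I would perturb each $\phi_k$ by direct-summing with an injective \hm\ $\sigma_k$ of negligible Cuntz-semigroup size. Since $A$ is stably projectionless, $0$ is not isolated in ${\rm sp}(a)$, so $a$ decomposes via functional calculus as $a = a_1 + a_2$ with $a_1 \perp a_2$ inside $\overline{aAa}$, $[a_1]$ arbitrarily small, and $\widehat{[a_2]}(\tau) > \widehat{\lambda([e_C])}(\tau)$ on $T(A)$. Re-running the previous two steps with $a_2$ replacing $a$ routes $\phi_k$ into $\overline{a_2 A a_2}$. Applying the main part of the corollary to a strictly positive morphism $\mu_k: \mathrm{Cu}^\sim(C) \to \mathrm{Cu}^\sim(A)$ with $\mu_k([e_C]) \le [a_1]$ produces a sequence inside $\overline{a_1 A a_1}$ approximating $\mu_k$ in $\mathrm{Cu}^\sim$; strict positivity of $\mu_k$ combined with semi-projectivity of $C$ lets me extract an injective representative $\sigma_k$ for large index. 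Then $\phi_k \oplus \sigma_k: C \to \overline{a_1 A a_1} + \overline{a_2 A a_2} \subset \overline{aAa}$ is injective, and its Cuntz class differs from that of $\phi_k$ only by the arbitrarily small $\mathrm{Cu}^\sim(\sigma_k)$, so $\lim_{k}^w \mathrm{Cu}^\sim(\phi_k \oplus \sigma_k) = \lambda$.

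The main obstacle will be the injectivity step: the existence theorem only approximates $\mu_k$ by a sequence in $\mathrm{Cu}^\sim$ rather than realizing it, so extracting an individually injective representative requires exploiting the finite primitive ideal space of $C \in \mathcal{I}_0$ to test injectivity at a finite family of irreducible representations, and simultaneously arranging that the $\sigma_k$-contribution to the compact part of $\mathrm{Cu}^\sim(C)$ vanishes (which is automatic when $C$ has no nonzero projection, the typical use-case of the corollary).
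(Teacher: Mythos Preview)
Your argument for the main claim is essentially the paper's: Theorem~\ref{TTEXTC0}(1) followed by Lemma~\ref{Llimintmaps}(2), with the identification $\overline{aM_m(A)a}=\overline{aAa}$.

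For the ``moreover'' part, your outline has two genuine gaps that the paper resolves by a specific construction you do not have.

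First, the orthogonal splitting $a=a_1+a_2$ with $\widehat{[a_2]}(\tau)>\widehat{\lambda([e_C])}(\tau)$ on $T(A)$ need not exist: the hypothesis allows $\lambda([e_C])=[a]$ in ${\rm Cu}(A)=\LAff_+(T(A))$, and then any $a_2\in\overline{aAa}_+$ with $a_2\perp a_1\neq 0$ satisfies $d_\tau(a_2)<d_\tau(a)=\widehat{\lambda([e_C])}(\tau)$ for some $\tau$, so you cannot re-run the existence step with $a_2$ in place of $a$. The paper avoids this by \emph{compressing} the already-obtained $\phi_k$ into ${\rm Her}(f_{\ep_k}(a))$ via semiprojectivity (Lemma~\ref{LappCum} handles the resulting perturbation), which frees the orthogonal piece $B_k:={\rm Her}(g_k(a))$ with $\sup_\tau d_\tau(g_k(a))\to 0$ by continuous scale. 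No second invocation of the existence theorem is needed for this step.

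Second, and this is the substantive missing idea, the paper does \emph{not} try to extract an injective $\sigma_k$ from an approximating sequence (your acknowledged obstacle). Instead it factors through auxiliary simple \CA s of \emph{stable rank one}, where Robert's Theorem~1.0.1 gives exact (not approximate) realization. Concretely: pick $D_k\subset B_k$ with continuous scale, let $E$ be a simple inductive limit of Razak algebras with $T(E)=T(D_k)$, and build
\[
C\ \xrightarrow{h_k}\ {\cal W}\ \xrightarrow{h_{W,E}}\ E\ \xrightarrow{h_{E,D}}\ D_k,
\]
each arrow coming from a strictly positive ${\rm Cu}^\sim$-morphism and hence injective. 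The composite $h_{k,C,D}$ is the required injective summand. This factorization does double duty: because ${\cal W}$ is $KK$-contractible, ${\rm Cu}^\sim(h_{k,C,D})|_{K_0(C)}=0$, so the perturbation leaves the compact part of ${\rm Cu}^\sim$ untouched even when $C$ has nonzero projections (which $C\in{\cal I}_0$ generally does --- your parenthetical restriction to projectionless $C$ is not available). The verification that $\phi_k:=\phi_k''+h_{k,C,D}$ satisfies ${\lim}^w{\rm Cu}^\sim(\phi_k)=\lambda$ then reduces to a trace estimate, using that the $h_{k,C,D}$-contribution is bounded by a multiple of $\sup_\tau d_\tau(g_k(a))\to 0$, together with Theorem~\ref{TBtildC} for the comparison in ${\rm Cu}(\td A)$.
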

 
 \begin{proof}
 {{Recall that $A$ satisfies the condition that $B$ satisfies in \ref{151}.}}
 Let $e_A\in A$ be a strictly positive element. Then $\widehat{[e_A]}$ is continuous on $T(A).$ 
 It follows from Theorem \ref{TTEXTC0} that there exists an integer $m\ge 2$ and a  sequence of \hm s 
 $\psi_k: C\to M_m(A)$ such that
 \beq
 \lim_{k\to\infty}{\rm Cu}^\sim(\psi_k)=\lambda.
 \eneq
 Since $A$ is stably projectionless and $\lambda$ is strictly positive, $\lambda([e_C])$ is not compact.
 Applying (2) of Lemma \ref{Llimintmaps}, we 
 obtain a sequence of \hm s $\phi_k': C\to  A$
  such that 
 \beq
  \lim_{k\to\infty}{\rm Cu}^\sim(\phi_k')=\lambda.
 \eneq
 
To see the last part of the statement and to make  \hm s injective, for each $k\ge 1,$ choose $0<\ep_k<1/2^{k+1}$
and define $L_k: C\to A_k:={\rm Her}(f_{\ep_k}(a))$ by
\beq
L_k(c)=f_{\ep_k}(a)\phi_k(c)f_{\ep_k}(a)\rforal c\in C.
\eneq
 Since $C$ is semiprojective, by choosing small $\ep_k,$ one obtains 
 a \hm\, $\phi_k'': C\to A_k$ such that (see also Theorem \ref{LappCum})
 \beq
 &&\lim_{k\to\infty}\|\phi_k''(c)-\phi_k(c)\|=0\rforal c\in C\andeqn\\\label{CEXTmaps-4}
&& \lim_{k\to\infty}{\rm Cu}^\sim (\phi_k'')=\lambda.
 \eneq
 Choose a nonzero function in $g_k\in C_0((0,1])_+$ with support in $(0, \ep_k/3)$
 and nowhere zero in $(0,\ep_k/3).$ 
 Put $B_k={\rm Her}(g_k(a)).$   Since $A$ is stably projectionless, we may assume
 that $B_k$ is nonzero. Note also $B_k\perp A_k.$  
 Put 
 \beq\label{CEXTmaps-5--}
 \sigma_k:=\sup\{d_\tau(g_k): \tau\in T(A)\}>0.
 \eneq
 Since $A$ has continuous scale, we have
 that
 \beq\label{CEXTmaps-5}
 \lim_{k\to\infty}\sigma_k=0.
 \eneq
 Note that $B_k$ is a hereditary \SCA\, of $A$ and therefore it is also ${\cal Z}$-stable (Cor.  3.1 of \cite{TW}).
 Choose a nonzero  hereditary \SCA\, $D_k\subset B_k$ which has continuous scale (see Remark 5.3 of \cite{eglnp}).
 By Theorem 6.11 of \cite{RS}, ${\rm Cu}^\sim(D_k)=K_0(D_k)\sqcup \LAff_+^\sim(T(D_k)).$ 
 By  Corollary A.8 of \cite{eglnkk0} and Theorem 4.1 of \cite{GLIII}, there exists $\tau_0\in T(D_k)$ such that 
 $\rho_{D_k}(x)(\tau_0)=0$ for all $x\in K_0(D_k),$ where $\rho_{D_k}: K_0(D_k)\to \Aff (T(D_k))$
 is the usual paring.   Recall ${\cal W}$ is the unique  separable $KK$-contractible 
 amenable  simple ${\cal Z}$-stable \CA\, with a unique tracial state $\tau_W.$ 
  Define 
 $\gamma: {\rm Cu}^\sim(D_k)\to {\rm Cu}^\sim ({\cal W})$ by 
 $\gamma|_{K_0(D_k)}=0$ and $\gamma(f)(\tau_W)=rf(\tau_0)$ for all $f\in 
 \LAff_+^\sim(\td T(D_k))$   for a choice of $0<r<1.$
  {{Recall ${\rm Cu}^\sim(D_k)={\rm Cu}^\sim (A).$}}
 Note that $\gamma \circ \lambda([c])\not=0$ for all $c\in C_+
 \setminus \{0\}.$  We choose $r$ so that  $\gamma\circ \lambda([e_C])(\tau_W)<1.$
By Theorem 1.0.1 of \cite{Rl}, there is an injective \hm\, (since $\gamma\circ\lambda$ is strictly positive)
$h_k: C\to {\cal W}$ such that ${\rm Cu}^\sim(h_k)=\gamma\circ \lambda.$
Let $E$ be a separable $KK$-contractible amenable simple ${\cal Z}$-stable \CA\, with $T(E)=T(D_k)$ 
and has stable rank one (see Theorem 2.8 of \cite{eglnkk0}).
Let $h_{E,D}: E\to {{D_k}}$ be a  nonzero \hm\, given  by Theorem A.26 of \cite{eglnkk0}
so that $h_{E,D}$ induces the identification of $T(E)=T(D).$
Let $\eta: {\rm Cu}^\sim({\cal W})\to {\rm Cu}^\sim (E)$ be defined by
$\eta(f)(\tau)=f(\tau_W)$ for all $f\in \LAff_+^\sim(\td T({\cal W})).$  Applying Theorem 1.0.1 of \cite{Rl} again, 
there is a monomorphism $h_{W, E}: {\cal W}\to E$ such that ${\rm Cu}^\sim(h_{W, E})=\eta.$

Define $h_{k, C, D}:=h_{E, D}\circ h_{W, E}\circ h_k: C\to D_k.$ Then $h_{k,C, D}$ is an injective \hm. 
Define $\phi_k: C\to {\rm Her}(a)$ by
$\phi_k(c)=\phi_k'(c)+h_{k,C, D}(c)$ for all $c\in C.$ Recall that $D_k\perp B_k.$ 
The map $\phi_k$ is injective.  It remains to show that $\lim_{k\to\infty}^w{\rm Cu}^\sim(\phi_k)=\lambda.$


Since $h_{k,C, D}$ factors through ${\cal W},$ 
${\rm Cu}^\sim(h_{k,C, D})|_{K_0(C)}=0.$   Note here we view $K_0(A)$ as a subset 
of ${\rm Cu}^\sim(C)$ (see subsection 6.1 and Theorem 6.1 of \cite{RS}).
 Then, 
by \eqref{CEXTmaps-4}, for any finite subset $G\subset {\rm Cu}^\sim(C),$ 
there exists $N\ge 1$ such that, for any $k\ge N$ (see also \ref{DappCu}),
\beq\label{Inj-200}
{\rm Cu}^\sim(\phi_k)(x)=\lambda(x)\rforal x\in G\cap K_0(C).
\eneq
Let  $f,g \in G,$ $f\ll g$ be such that  neither $f$ nor $g$ are compact.
Recall, by Theorem 5.3 of \cite{RS}, that ${\rm Cu}^\sim (A)$ is orderly embedded into ${\rm Cu}^\sim(\td A).$
Let $\lambda^\sim: {\rm Cu}^\sim(\td C)\to {\rm Cu}^\sim(\td A)$ be the unique extension of 
$\lambda$ given by Lemma \ref{Lunitalext}. As in the proof of case (3) in the proof of Theorem \ref{TTEXTC0},
$\lambda^\sim$ is strictly positive.

Let ${\bar f}$ and ${\bar g}$ be as in the proof of \ref{LappCum} with $\|{\bar f}\|, \|{\bar g}\|\le 1.$
We also retain other  notations in the proof \ref{LappCum} related to $f$ and $g.$

Since $f$ and $g$ are not compact elements,   by (ii) of Theorem 6.1 of \cite{RS}, 
neither are ${\bar f}$ and ${\bar g}.$ 
Since $\lambda^\sim$ is strictly positive,
$\lambda^\sim ({\bar f})$ and $\lambda^\sim ({\bar g})$ are not compact.
Let $d_f, d_g\in M_r(\td A)_+$ (for some $r\ge 1$) such that 
$[d_f]=\lambda^\sim({\bar f}),$  and $[d_g]=\lambda^\sim({\bar g}).$
Note, as $\lambda^\sim$ is the unique extension of $\lambda,$ 
\beq\label{Inj-n-100}
\lambda^\sim({\bar f})=\lambda(f)+m_f[1_{\td A}]+m_g[1_{\td A}]\andeqn
\lambda^\sim({\bar g})=\lambda(g)+m_g[1_{\td A}]+m_f[1_{\td A}]
\eneq
(see \eqref{Rl33}).
%
Then (recall that $[d_g]$ cannot be represented by a projection), there is $0<\dt<1/2$ such that 
\beq\label{Inj-1}
&&
\pi_\C^C(f_{\dt}(d_g))=\pi_\C^C(d_g),\,\,\, d_\tau(f_{\dt/2}(d_g))>\tau(f_{\dt}(d_g))\rforal \tau\in T(A)\andeqn\\
&&{[}{\bar f}{]}\ll  [f_{2\dt}(d_g)]\le [d_g].
\eneq
Thus, by \eqref{CEXTmaps-4}, there exists an integer $N_1\ge 1$ such that, for $k\ge N_1,$
\beq
[\phi_k'^\sim({\bar f})]\le [f_{2\dt}(d_g)].
\eneq
Therefore (see also \eqref{Inj-1})
\beq\label{Inj-99}
d_\tau(\phi_k'^\sim({\bar f}))<\tau(f_\dt(d_g))<\tau(f_{\dt/2}(d_g))\le d_\tau(f_{\dt/2}(d_g))\le d_\tau({\bar g})\rforal \tau\in T(A)\andeqn\\
d_\tau(\phi_k'^\sim({\bar f}))\le \tau(f_\dt(d_g))\le \tau(f_{\dt/2}(d_g))\le d_\tau(f_{\dt/2}(d_g))\le d_\tau({\bar g})\rforal \tau\in T(\td A).
\eneq
Note that  the lower semicontinuous function  $\widehat{[f_{\dt/2}(d_g)]}-\widehat{f_\dt(d_g)}$ 
is strictly positive
on the compact set $T(A).$ 
It follows 
that 
\beq\label{Inj-20}
\eta:=\inf\{d_\tau(f_{\dt/2}(d_g))-\tau(f_\dt(d_g)):\tau\in T(A)\}>0.
\eneq
Note that  we may assume that ${\bar f}\in M_{r+m_g}(\td C)$ (see the  lines below \eqref{512} and lines below 
\eqref{59} in 
the proof 
of \ref{LappCum}).
We may also assume, for all $k\ge N_1,$
\beq
(r+m_g)\sigma_k<\eta/4.
\eneq
For any $1/2>\ep_0>0,$ 
write
\beq
f_{\ep_0}({\bar f})=S+c_{f, \ep_0},
\eneq
where $S\in M_{r+m_g}(\C)_+$ and $c_{f, \ep_0}\in M_{r+m_g}(C)_{s.a.}$
and $\|S\|\le 1$ and $\|c_{f, \ep_0}\|\le 2.$ 
Recall (identifying $S$ with the scalar matrix),
\beq\label{Inj-10}
\phi_k'^\sim(f_{\ep_0}({\bar f}))=S+\phi_k'(c_{f,\ep_0}) \andeqn \phi_k^\sim (f_{\ep_0}({\bar f}))=S+\phi_k'^\sim (c_{f, \ep_0})+h_{k, C, D}(c_{f, \ep_0}).
\eneq
We estimate that, by \eqref{CEXTmaps-5--},  for all $\tau\in T(A).$ 
\beq
|\tau(h_{k, C,D}((c_{f, \ep_0})))|\le 2 (r+m_g)\sigma_k<\eta/2.
\eneq
Combining this with \eqref{Inj-10}, \eqref{Inj-99},  and \eqref{Inj-20},
we obtain, for any $1/2>\ep_0>0,$ if  $k\ge N_1,$
\beq
&&d_\tau(\phi_k^\sim(f_{\ep_0}({\bar f})))<d_{\tau}(d_g)\rforal \tau\in T(A)\andeqn\\
&&d_{\tau}(\phi_k^\sim (f_{\ep_0}({\bar f})))\le d_\tau(d_g) \rforal \tau\in  T(\td A).
\eneq
It follows from  
Theorem \ref{TBtildC},
if  $k\ge N_1$  (in ${\rm Cu}(\td A)$),
$
[\phi_k^\sim(f_{\ep_0}({\bar f}))]\le 
[d_g].
$
Since $N_1$ does not depend on $\ep_0,$ this implies 
that (in ${\rm Cu}(\td A)$)
$
{\rm Cu}(\phi_k^\sim)({\bar f})\le [d_g].
$
%
In other words (see also \eqref{Inj-n-100} and the lines below \eqref{59}), if $k\ge N_1,$ 
\beq
{\rm Cu}^\sim(\phi_k)(f)+(m_f+m_g+2)[1_{\td A}]=[\phi_k(a^f)]+m_g[1_{\td A}]\le \lambda(g)+(m_g+m_f+2)[1_{\td A}]
\eneq
(recall that $A$ has stable rank at most 2).
Thus, if $k\ge N_1,$ 
\beq
{\rm Cu}^\sim(\phi_k)(f)\le \lambda(g).
\eneq
The same  argument shows that, if $k\ge N_1,$ 
\beq
\lambda(f)\le {\rm Cu}^\sim(\phi_k(g)).
\eneq
Hence, combining with the last two displays and \eqref{Inj-200}, one obtains 
\beq\nonumber
{\lim}^w_{n\to\infty}{\rm Cu}^\sim(\phi_k)=\lambda.
\eneq
\vspace{-0.1in}
 \end{proof}

\begin{df}\label{DlambdaT}
Let $C$ be a separable \CA\, such that $T(C)\not=\emptyset$ and $QT(C)=T(C).$ 
Let $B$  be  a separable simple \CA\,  with continuous scale such that   $QT(B)=T(B)$ and 
 ${\rm Cu}(B)=\LAff_+(T(B)).$  
 Let $\lambda:{\rm Cu}^\sim(C)\to {\rm Cu}^\sim(B)$ be a 
 morphism in ${\bf Cu}$ such that 
 $\lambda([e_C])\le [e_B],$ where $0\le e_C\le 1$ and $0
 \le e_B\le 1$ are strictly positive elements of $C$ and $B,$ respectively.
 Let $T_0(C)$ and $T_0(B)$ be the sets of all 
 traces on $C$ and $B$ with norm no more than $1,$ respectively. 
 
Let $a\in C_+$ with $\|a\|\le 1.$ For each $n$ consider 
$x_n=\sum_{k=1}^{2^n} (1/2^n) p_{(t_{k,n}, 1]},$ where $t_{k,n}=k/2^n$ and 
$p_{(t_{k,n},1]}$ is the open spectral projection of $a$ associated with the $(t_{k,n},1]$ in 
$C^{**}.$ Note that $d_\tau((a-t_{k,n})_+)=\tau(p_{(t_{k,n},1]})$ for all $\tau\in T_0(C)$ and 
$\tau(x_n)=\sum_{k=1}^{2^n} (1/2^n)d_\tau((a-t_{k,n})_+)$
for all $\tau\in T_0(C).$ 
Moreover, 
\beq\label{Integral-1}
\sup\{|\tau(x_n)-\tau(a)|: \tau\in T_0(C)\}\le 1/2^n.
\eneq
For each $s\in T_0(B),$ define, for each $a\in M_r(C)_+$ (for integer $r\ge 1$),
\beq
\lambda_T(s)(a)=\int_0^{\infty} \lambda([(a-t)_+])^{\widehat{}}(s)dt.
\eneq
By Proposition 4.2 of \cite{ESR-Cuntz}, $\lambda_T(s)$ defines a  lower semi-continuous quasitrace on $C\otimes {\cal K}.$ 
Note that $B$ has continuous scale. So $e_B\in {\rm Ped}(B).$  Since $\lambda([e_C])\le [e_B],$ 
if $a\in M_r(C)_+,$ 
$\lambda([(a-t)_+])^{\widehat{}}(s)\le r\|a\|$ for all  $t\in [0, \|a\|]$ and $s\in T(B).$
Since $QT(C)=T(C),$ $\lambda_T(s)$ is in $T_0(C).$ 
Proposition 4.2 of \cite{ESR-Cuntz}   also implies that the map $s\mapsto \lambda_T(s)$ is the 
affine continuous map from $T_0(B)$ to $T_0(C)$  induced 
by $\lambda.$
Note 
\beq\label{Approtrace-1}
\lambda_T(s)(a)=\lim_{n\to\infty}(\sum_{k=1}^{2^n} (1/2^n)\lambda([(a-t_{k,n})_+])^{\widehat{}}(s)).
\eneq
Moreover, for $a\in C_+$ with $\|a\|\le 1,$
\beq\label{Apptrace-0}
\lim_{n\to\infty}\sup\{|\lambda_T(s)(a)-(\sum_{k=1}^{2^n} (1/2^n)\lambda([(a-t_{k,n})_+])^{\widehat{}}(s))|:s\in T(B)\}=0.
\eneq
%
%
Let $\phi: C\to A$ be a \hm. Then, for any $a\in C_+$ with $\|a\|\le 1,$ 
\beq\label{Approtrace-2}
\lim_{n\to\infty}\sup\{|\tau(\phi(c))-\sum_{k=1}^{2^n}(1/2^n)\tau(\phi(f_{1/2^{n+3}}(a-t_{k, n})_+))|:\tau\in T(B)\}=0.
\eneq

 Now suppose that $B$ is stably projectionless and $\lambda$ is strictly positive. 
If $\phi_k: C\to B$ is a sequence of injective \hm s such that $\lim^w_{n\to\infty}{\rm Cu}^\sim(\phi_k)=\lambda,$ 
then, for each fixed $a\in C_+$ with $\|a\|\le 1$ and $n>1,$   there is $N\ge 1$ such that, when $j\ge N,$
\beq\label{Approtrace-3}
&&\hspace{-0.4in}\lambda([(a-t_{k,n})_+])\le [\phi_j(f_{1/2^{n+2}}((a-t_{k+1, n})_+)]\\
&&\le [\phi_j(f_{1/2^{n+3}}((a-t_{k+1, n})_+)]
 \le \lambda([a-t_{k+1,n})_+),\,\,\,k=1,2,...,2^n.
\eneq
It follows that, for all $\tau\in T(B),$ 
\beq\label{Approtrace-5}
\lambda([(a-t_{k,n})_+])^{\widehat{}}(\tau)\le \tau(\phi_j(f_{1/2^{n+3}}((a-t_{k+1, n})_+)
 \le \lambda([a-t_{k+1,n})_+)^{\widehat{}}(\tau).
\eneq
By \eqref{Approtrace-2}, \eqref{Apptrace-0},  and \eqref{Approtrace-5},
\beq\label{Apptrcace-100}
\lim_{k\to\infty}\sup\{|\tau(\phi_k)(a)-\lambda_T(\tau)(a)|: \tau\in T_0(B)\}=0.
\eneq

\end{df}

Recall that  if $A$ is a finite  exact separable simple 
${\cal Z}$-stable \CA\, then $A$ satisfies the conditions in the following statement.

\begin{thm}\label{Cinductive}
Let $C=\lim_{n\to\infty} C_n$ with a strictly positive element $e_C,$  where each $C_n\in {\cal I}_0$  and 
each map $\iota_n: C_n\to C_{n+1}$ is injective,  and let 
$A$ be a  separable simple 
\CA\, 
 with continuous scale  and with a strictly positive element $e_A$ such 
 that $M_m(A)$ has almost stable rank one for all $m\ge 1$ and 
$QT(A)=T(A),$ and ${\rm Cu}(A)=\LAff_+(T(A)).$ 
Suppose that $\lambda: {\rm Cu}^\sim(C)\to {\rm Cu}^\sim(A)$
is a morphism in ${\bf Cu}$
such that $\lambda([e_C])\le [e_A]$ and $\lambda([c])\not=0$ for 
any $c\in C_+\setminus \{0\}.$ Then there exists a sequence of \morp s 
$L_n: C\to A$ 
and a sequence of injective  \hm s $h_n: C_n\to A$ such that
\beq\nonumber
&&\lim_{n\to\infty}\|L_n(ab)-L_n(a)L_n(b)\|=0\tforal a, b\in C,\\\nonumber
&& \,\text{and, for each fixed $m,$}
\lim_{n\to\infty}\|L_n(\iota_{m, \infty}(c))-h_n(\iota_{m,n}(c))\|=0\rforal c\in C_m,\\\nonumber
&&\andeqn
\lim_{n\to\infty} \sup_{\tau\in T(A)}\|\tau(L_n(a))-\lambda_T(\tau)(a)\|=0\tforal a\in C.
\eneq

\end{thm}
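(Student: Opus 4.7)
The plan is to first manufacture the injective $*$-homomorphisms $h_n:C_n\to A$ via the strictly positive existence result of the previous section, and then to assemble them into c.p.c.\ maps $L_n:C\to A$ that asymptotically implement the inductive limit. First, for each $n$ I would form $\lambda_n:=\lambda\circ {\rm Cu}^\sim(\iota_{n,\infty}):{\rm Cu}^\sim(C_n)\to {\rm Cu}^\sim(A)$. Since $\iota_{n,\infty}$ is injective, $\iota_{n,\infty}(c)\ne 0$ for every nonzero $c\in (C_n)_+$, so the strict positivity of $\lambda$ transfers to $\lambda_n$; also $\lambda_n([e_{C_n}])\le \lambda([e_C])\le [e_A]$. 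Hence Corollary~\ref{CEXTmaps} applies and yields, for each $n$, a sequence of injective $*$-homomorphisms $h_{n,k}:C_n\to A$ with $\lim_{k\to\infty}^{w}{\rm Cu}^\sim(h_{n,k})=\lambda_n$. By \eqref{Apptrcace-100} of Definition~\ref{DlambdaT}, this weak ${\rm Cu}^\sim$-convergence forces uniform trace convergence
\[
\lim_{k\to\infty}\sup_{\tau\in T(A)}\bigl|\tau(h_{n,k}(c))-\lambda_T(\tau)(\iota_{n,\infty}(c))\bigr|=0
\]
for every $c\in C_n$, where I use the identity $(\lambda_n)_T(\tau)(c)=\lambda_T(\tau)(\iota_{n,\infty}(c))$, an immediate consequence of the integral formula defining $\lambda_T$.

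Next I would run a diagonal argument. Fix increasing finite subsets $\mathcal{G}_n\subset C_n$ compatible with the connecting maps (i.e.\ $\iota_{n,n+1}(\mathcal{G}_n)\subset \mathcal{G}_{n+1}$) and with $\bigcup_n \iota_{n,\infty}(\mathcal{G}_n)$ dense in $C$, and choose $k(n)$ so large that $h_n:=h_{n,k(n)}$ satisfies the trace estimate above to within $1/n$ uniformly in $\tau$ over $\mathcal{G}_n$. With the $h_n$ in hand, my plan for constructing $L_n:C\to A$ is to view $h_n$ as a $*$-homomorphism from the subalgebra $\iota_{n,\infty}(C_n)\subset C$ into $A$, Arveson-extend it to a c.p.c.\ map $\hat L_n:C\to A^{**}$, and then compress by a positive contraction $u_n\in A$ chosen inside the hereditary subalgebra of $A$ generated by $h_n(C_n)$ so that $u_n^{1/2}h_n(c)u_n^{1/2}\approx h_n(c)$ on $\mathcal{G}_n$, setting $L_n(x):=u_n^{1/2}\hat L_n(x)u_n^{1/2}$. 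The matching condition is then immediate: for $m\le n$ and $c\in C_m$,
\[
L_n(\iota_{m,\infty}(c))=u_n^{1/2}h_n(\iota_{m,n}(c))u_n^{1/2}\approx h_n(\iota_{m,n}(c)).
\]
Asymptotic multiplicativity of $L_n$ on $C$ follows because on the dense subset $\bigcup_m\iota_{m,\infty}(C_m)$ the map $L_n$ is arbitrarily close to the genuine $*$-homomorphism $h_n\circ\iota_{m,n}\circ\iota_{m,\infty}^{-1}$, and the general case is obtained by continuity since $L_n$ is contractive. The uniform tracial estimate in the last bullet then combines the diagonal choice for $h_n$ on $\mathcal{G}_n$ with the matching property, the asymptotic multiplicativity, and the continuity of $\tau\mapsto \lambda_T(\tau)(a)$.

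The main obstacle is the compression step: Arveson's theorem delivers $\hat L_n$ with image in the injective envelope $A^{**}$, but $A$ is not an ideal in $A^{**}$, so a naive compression by $u_n\in A$ need not land inside $A$. The resolution rests on choosing the extension $\hat L_n$ with image in the multiplier algebra $M(A)$, in which $A$ \emph{is} an ideal, so that compression by $u_n\in A$ does return values in $A$; this is possible because the $C_n$ are nuclear $1$-dimensional NCCW complexes. A cleaner alternative, available in the motivating case where $A$ is exact and $\mathcal Z$-stable, is to exploit the completely positive approximation property of $A$: factor $h_n\approx \psi_{r(n)}\circ\phi_{r(n)}\circ h_n$ through a finite-dimensional $M_{r(n)}$, Arveson-extend the middle piece into the automatically injective $M_{r(n)}$, and postcompose with $\psi_{r(n)}:M_{r(n)}\to A$ to return to $A$. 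Either route yields the required $L_n$, and the remaining verifications are routine.
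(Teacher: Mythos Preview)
Your overall architecture is exactly that of the paper: form $\lambda_n=\lambda\circ{\rm Cu}^\sim(\iota_{n,\infty})$, invoke Corollary~\ref{CEXTmaps} to get injective $h_{n,k}:C_n\to A$ with $\lim_k^{w}{\rm Cu}^\sim(h_{n,k})=\lambda_n$, pull out uniform trace convergence via \eqref{Apptrcace-100}, and diagonalize to produce the $h_n$. All of that is correct.

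The gap is in the construction of $L_n$. Both of your proposed fixes lean on properties of $A$ that are \emph{not} in the hypotheses: $M(A)$ is not injective in general, so Arveson does not land you there; and ``exact and $\mathcal Z$-stable'' does not give $A$ the CPAP (that needs nuclearity). The paper's one-line solution is to use nuclearity on the \emph{domain} side: each $C_n\in\mathcal I_0$ is a $1$-dimensional NCCW complex, hence type~I, hence nuclear, so $C=\varinjlim C_n$ is nuclear (this is the ``since $C$ is amenable'' in the paper's proof). Concretely, your route~(b) works verbatim once you swap which algebra supplies the matrix factorization: because $C_n$ is nuclear, the map $h_n:C_n\to A$ is nuclear and factors approximately as $h_n\approx\beta\circ\alpha$ with $\alpha:C_n\to M_r$ and $\beta:M_r\to A$ c.p.c.; now Arveson-extend $\alpha$ (viewing $C_n\cong\iota_{n,\infty}(C_n)\subset C$) to $\widetilde\alpha:C\to M_r$, which is legitimate since $M_r$ \emph{is} injective, and set $L_n:=\beta\circ\widetilde\alpha:C\to A$. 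Equivalently, CPAP of $C$ yields c.p.c.\ maps $\theta_n:C\to C_n$ with $\iota_{n,\infty}\circ\theta_n\to\id_C$ pointwise, and one may simply take $L_n:=h_n\circ\theta_n$. Either way the target stays $A$ with no compression tricks needed, and your remaining verifications go through unchanged.
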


\begin{proof}
%
We first assume that $A$ is stably projectionless. 
For each $k,$ consider $\af_k:=\lambda\circ {\rm Cu}^\sim(\iota_{k, \infty}).$ 
By Corollary \ref{CEXTmaps}, there exists a sequence 
of injective \hm s $\phi_{k,n}: C_k\to A$ such that $\lim_{n\to\infty}^w{\rm Cu}^\sim(\phi_{k,n})=\af_k.$
Then (see \eqref{Apptrcace-100})
\beq
\lim_{n\to\infty}\sup\{|\tau\circ \phi_{k,n}(c)-{\af_k}_T(\tau)(c)|: \tau\in T_0(A)\}=0\rforal c\in C_k.
\eneq
One obtains a sequence of injective  \hm s 
$h_n: C_n\to A$ and, since $C$ is amenable,   a sequence of 
\morp s $L_n: C\to A$
such that, for any fixed $m,$
\beq\nonumber
&&\lim_{n\to\infty}\|L_n(\iota_{m, \infty}(c))-h_n(\iota_{m,n}(c))\|=0\rforal c\in C_m, \\\nonumber
&&\lim_{n\to\infty}\|L_n(ab)-L_n(a)L_n(b)\|=0\rforal a, b\in C\andeqn\\\nonumber
&&\lim_{n\to\infty} \sup_{\tau\in T(B)}\|\tau(L_n(a))-\lambda_T(\tau)(a))\|=0\rforal a\in C.
\eneq
If $A$ is not stably projectionless, by  Proposition \ref{P1}, $A$ has stable rank one. 
Then, by 
Theorem 1.0.1 of \cite{Rl}, there is a \hm\, $H: C\to A$ such 
that ${\rm Cu}^\sim(H)=\lambda.$  Choose $L_n=H$ and $h_n=H\circ \iota_{n, \infty}.$ 
Then this case also follows. 
\end{proof}

\begin{cor}\label{MCC}
Let $C=\lim_{n\to\infty} (C_n, \iota_n)$ be as in Theorem \ref{Cinductive}  which is simple and  has continuous scale 
and $A$ be a finite  exact separable simple stably projectionless ${\cal Z}$-stable \CA\, with continuous scale.
Suppose 
that there is an isomorphism
\beq\label{MCC-1}
\Gamma: (K_0(C), T(C), r_C)\cong (K_0(A), T(A), r_A).
\eneq
Then there exists a sequence of \morp s $L_n: C\to A$
and a sequence of injective \hm s $h_n: C_n\to A$ 
 such that
\beq
&&\lim_{n\to\infty}\|L_n(ab)-L_n(a)L_n(b)\|=0\rforal a, b\in C,\\
&&\hspace{-0.1in}\lim_{n\to\infty} \sup_{\tau\in T(B)}\|\tau(L_n(a))-\lambda_T(\tau)(a)\|=0\rforal a\in C\\
&&{\text{and,\,\,for  each fixed $m,$}}
\lim_{n\to\infty}\|L_n(\iota_{m, \infty}(c))-h_n(\iota_{m,n}(c))\|=0\rforal  c\in C_m,\\
&&\lim_{n\to\infty} \sup_{\tau\in T(B)}\|\tau(h_n(\iota_{m,n}(c)))-\lambda_T(\tau)(\iota_{m,\infty}(c))\|=0
\rforal c\in C_m,
\eneq
where $\lambda_T: T(A)\to T(C)$ is the affine homeomorphism given 
by $\Gamma.$
\end{cor}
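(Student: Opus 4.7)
The plan is to reduce to Theorem \ref{Cinductive} by manufacturing a ${\bf Cu}$-morphism $\lambda\colon {\rm Cu}^\sim(C)\to {\rm Cu}^\sim(A)$ out of the Elliott-type isomorphism $\Gamma$. The first three conclusions on $L_n$ and $h_n$ will come directly from that theorem; only the fourth (trace approximation for $h_n$) requires a short extra argument via the triangle inequality.

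To build $\lambda$: since $A$ is finite, exact, separable, simple, stably projectionless and ${\cal Z}$-stable with continuous scale, Corollary \ref{CtdBcomp} together with the order embedding ${\rm Cu}^\sim(A)\hookrightarrow {\rm Cu}^\sim(\td A)$ (Theorem 5.3 of \cite{RS}) yields
\[ {\rm Cu}^\sim(A) = K_0(A) \sqcup \LAff_+^\sim(T(A)). \]
On the other side, $C$ is simple of stable rank one (being a simple inductive limit of ${\cal I}_0$ algebras, each of stable rank one), has $K_1(C)=0$, and has continuous scale, so the analogous description of ${\rm Cu}^\sim(C)$ is available. Let $\lambda_T\colon T(A)\to T(C)$ be the affine homeomorphism induced by $\Gamma$, and define $\lambda|_{K_0(C)}=\Gamma$ and $\lambda(f)=f\circ \lambda_T$ for $f\in \LAff_+^\sim(T(C))$. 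The compatibility $r_A\circ \Gamma = \lambda_T^*\circ r_C$ built into the isomorphism of invariants guarantees that $\lambda$ respects the mixed addition and order joining the $K_0$ and $\LAff$ pieces of ${\rm Cu}^\sim$; continuity under suprema is automatic on each piece.

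Strict positivity and the scale bound are then routine. If $c\in C_+\setminus\{0\}$, simplicity of $C$ forces $d_\sigma(c)>0$ for every nonzero $\sigma\in T(C)$, so $\widehat{\lambda([c])}(\tau)=d_{\lambda_T(\tau)}(c)>0$ for every nonzero $\tau\in T(A)$, and $\lambda$ is strictly positive. Continuous scale of both $C$ and $A$, combined with preservation of the pairing by $\Gamma$, forces $d_{\lambda_T(\tau)}(e_C)=d_\tau(e_A)$ for every $\tau\in T(A)$ (after normalizing via the continuous scale); since $A$ is stably projectionless, $[e_A]$ is not compact, and Theorem \ref{TBtildC} delivers $\lambda([e_C])\le [e_A]$. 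An application of Theorem \ref{Cinductive} then produces $L_n\colon C\to A$ and injective $h_n\colon C_n\to A$ fulfilling the first three conclusions, and, for the last conclusion, fixing $m$, $c\in C_m$ and $\tau\in T(A)$, the estimate
\[ |\tau(h_n(\iota_{m,n}(c)))-\lambda_T(\tau)(\iota_{m,\infty}(c))| \le \|h_n(\iota_{m,n}(c))-L_n(\iota_{m,\infty}(c))\|+|\tau(L_n(\iota_{m,\infty}(c)))-\lambda_T(\tau)(\iota_{m,\infty}(c))| \]
shows, by the second and third conclusions of Theorem \ref{Cinductive}, that both summands tend to zero uniformly in $\tau$. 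The main obstacle is verifying that the pairing-compatibility of $\Gamma$ translates into preservation of the mixed order on ${\rm Cu}^\sim$, where elements coming from $K_0$ and from $\LAff$ are compared simultaneously; once this is in place, the remainder is a direct invocation of the apparatus of Sections 4--6.
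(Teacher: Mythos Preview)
Your proposal is correct and follows essentially the same route as the paper: identify ${\rm Cu}^\sim(C)$ and ${\rm Cu}^\sim(A)$ as $K_0\sqcup \LAff_+^\sim$, translate the isomorphism $\Gamma$ into a ${\bf Cu}$-morphism $\lambda$, and invoke Theorem~\ref{Cinductive}. You supply more detail than the paper (which simply cites Proposition~6.2.3 of \cite{Rl} and Theorem~6.1.1 of \cite{RS} for the two ${\rm Cu}^\sim$ descriptions, obtains $\lambda([e_C])=[e_A]$ directly as an isomorphism rather than an inequality, and does not spell out the triangle-inequality step for the fourth conclusion); your citation of Corollary~\ref{CtdBcomp} for the description of ${\rm Cu}^\sim(A)$ is slightly off since that corollary concerns ${\rm Cu}(\td B)$, but the intended result is the one in \cite{RS} that the paper cites.
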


\begin{proof}
By  Proposition 6.2.3 of \cite{Rl},  ${\rm Cu}^\sim(C)=K_0(C)\sqcup \LAff_+^\sim(T(C)).$ 
Also, by  Theorem 6.1.1 of \cite{RS} (see also subsection 6.3 of \cite{RS}), ${\rm Cu}^\sim(A)=K_0(A)\sqcup \LAff_+^\sim(T(A)).$
Let $e_C$ and $e_A$ be strictly positive elements of $C$ and $A,$ respectively. 
By \eqref{MCC-1}, there is an isomorphism $\lambda: {\rm Cu}^\sim(C)\to {\rm Cu}^\sim(A)$
(in ${\bf Cu}$) such that $\lambda([e_C])=[e_A].$
Thus, the corollary follows from Theorem \ref{Cinductive} immediately.
\end{proof}

Corollary \ref{MCC} plays an important role in   achieving 
the following 
theorem which was first proved with the additional condition that $A$ has stable rank one.
The only place where we need the condition that $A$ has stable rank one 
was to have a \hm\, $h: C\to A,$ where 
$C=\lim_{n\to\infty}(C_n, \iota_n),$ $C_n\in {\cal I}_0$ and  $\iota_n: C_n\to C_{n+1}$
are injective, $C$ has continuous scale, and $(K_0(C),  T(C), r_C)=(K_0(A), T(A), r_A)$
such that $[h]$ induces the identification  map on $(K_0(C), T(C), r_C).$ 
Note the identification map on the invariant set gives 
a strictly positive morphism $\lambda: {\rm Cu}^\sim(C)\to {\rm Cu}^\sim(A)$ with $\lambda([e_C])=[e_A],$
where $e_C$ and $e_A$ are strictly positive elements of $C$ and $A,$ respectively.
So the existence of such $h$ follows from Theorem 1.0.1 of \cite{Rl}.  In fact, 
one only needs an approximate version of  Robert's result.  
Without assuming $A$ has stable rank one, one may not apply the result of L. Robert. 
However, one can apply Corollary \ref{MCC} to obtain a sequence of \hm s $h_k$  that approximates $\lambda$ which 
improves the original version of the following theorem. 

\begin{thm}[Theorem 7.12 of \cite{GLIII}]\label{Treduction}
Let $A$ be separable amenable  simple 
{{stably projectionless}} \CA\, with continuous scale  such that
$T(A)\not=\{0\}$ and   satisfying
the UCT.
Then $A\otimes Q$  has generalized tracial rank one.
\end{thm}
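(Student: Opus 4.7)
The plan is to follow the structure of the proof of Theorem 7.12 in \cite{GLIII}, where the assumption ``$A\otimes Q$ has stable rank one'' entered essentially in only one place: to invoke Theorem 1.0.1 of \cite{Rl} and produce a single \hm\, $h\colon C\to A\otimes Q$ from a model inductive limit $C=\lim_n(C_n,\iota_n)$ (with $C_n\in {\cal I}_0$, $\iota_n$ injective, $C$ simple amenable stably projectionless with continuous scale and with the same Elliott invariant as $A\otimes Q$) realizing the identification of invariants. Remove that hypothesis and replace $h$ by the approximating pair $(L_n,h_n)$ supplied by Corollary \ref{MCC}; nothing else in the original argument needs to change in structure, only in bookkeeping.

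First I would verify that $B:=A\otimes Q$ satisfies the standing hypotheses of \ref{151}. Since $Q$ is ${\cal Z}$-stable, so is $B$; since $A$ is stably projectionless so is $B$; since $Q$ is nuclear and $A$ is amenable, $B$ is exact and $QT(B)=T(B)$; and since $A$ has continuous scale so does $B$. By Theorem \ref{astrk1z} and the discussion in \ref{Dregular}--\ref{151}, $B$ is a finite separable regular simple \CA\, of the form considered throughout Sections 3--6. Then, using the range-of-invariant results of \cite{GLIII}, construct a separable amenable simple stably projectionless \CA\, $C=\lim_{n\to\infty}(C_n,\iota_n)$ with $C_n\in {\cal I}_0$, each $\iota_n$ injective, $C$ having continuous scale, and with
\[
(K_0(C),T(C),r_C)\cong (K_0(B),T(B),r_B).
\]
Apply Corollary \ref{MCC} to obtain \morp s $L_n\colon C\to B$ and injective \hm s $h_n\colon C_n\to B$ with the three asymptotic properties listed there.

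Next I would reproduce the finite-stage argument of Theorem 7.12 of \cite{GLIII}. Inspection of that proof shows that the homomorphism $h$ is only used in the following finite form: given $\ep>0$, a finite $\mathcal{F}\subset B$, and a nonzero $a\in B_+$, one chooses $m$ so large that $\mathcal{F}$ lies within $\ep/2$ of $h(\iota_{m,\infty}(C_m))$, and then works inside $h\circ\iota_{m,\infty}(C_m)$ to produce a hereditary \SCA\, $B_0\in {\cal I}_0$ with $1_{B_0}\mathcal{F}\approx_\ep\mathcal{F}$, $(1-1_{B_0})\lesssim a$, and the prescribed tracial mass. In the present setting I instead pull $\mathcal{F}$ back approximately to a finite subset $\mathcal{F}'\subset C$ via the identification of invariants, choose $m$ so that $\mathcal{F}'\subset_{\ep/4}\iota_{m,\infty}(C_m)$, and then for $n$ sufficiently large use $h_n\circ\iota_{m,n}\colon C_m\to B$ in place of $h\circ\iota_{m,\infty}$. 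The approximate intertwining $\|L_n\circ \iota_{m,\infty}(c)-h_n\circ\iota_{m,n}(c)\|\to 0$ together with approximate multiplicativity of $L_n$ gives $\mathcal{F}\subset_\ep h_n\circ\iota_{m,n}(C_m)$; the uniform trace convergence $\sup_\tau|\tau\circ L_n(c)-\lambda_T(\tau)(c)|\to 0$ transports the tracial bookkeeping; and the complement's smallness, previously obtained by stable-rank-one polar-decomposition arguments, is now obtained from strict comparison in $\td B$ (Theorem \ref{TBtildC} and Corollary \ref{CtdBcomp}) together with the approximate subequivalence Theorem \ref{LcomparisonU}, whose unitaries in $M_{2r}(\td B)$ do exactly the work that cancellation in ${\rm Cu}(\td B)$ would have done under stable rank one.

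The main obstacle will be the uniform bookkeeping: one must simultaneously control approximate multiplicativity of $L_n$ on growing finite subsets, the trace distance, the replacement of $L_n$ by $h_n\circ \iota_{m,n}$ on $C_m$, and the Cuntz-subequivalence $(1-p)\lesssim a$ witnessing smallness of the cutoff complement, all with tolerances compatible with the definition of generalized tracial rank one. Because $B$ is not known to have stable rank one, $K_1$-level and projection-level manipulations that formerly used the cancellation $U(\td B)/U_0(\td B)=K_1(B)$ and cancellation of projections must instead be performed using Corollary \ref{C3} (which gives $U(\td B)/U_0(\td B)=K_1(B)$ without stable rank one) together with the weak cancellation of ${\rm Cu}(\td B)$ from Remark \ref{Rcomparison}; any step that needed an exact polar decomposition $x=v|x|$ with $v\in \td B$ must be replaced by the approximate version coming from Lemma \ref{LuniH} and Proposition \ref{Pmatrixsr1}. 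These are precisely the ingredients developed in Sections 3--6 of the paper, and they combine to let the finite-stage approximation go through, yielding generalized tracial rank one for $A\otimes Q$.
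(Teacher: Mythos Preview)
Your proposal takes essentially the same approach as the paper. The paper does not give a detailed proof; in the paragraph preceding the theorem it states that the proof of Theorem~7.12 of \cite{GLIII} goes through verbatim once the single invocation of Robert's Theorem~1.0.1 (which required stable rank one to produce a \hm\ $h\colon C\to A$ realizing the identification of invariants) is replaced by the approximate version furnished by Corollary~\ref{MCC}. That is exactly your plan.

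Two remarks. First, the paper is explicit that ``the only place where we need the condition that $A$ has stable rank one was to have a \hm\ $h:C\to A$''; the remaining steps in \cite{GLIII} did not use stable rank one. So the machinery you invoke in your final paragraph (Theorems~\ref{LcomparisonU} and \ref{TBtildC}, Corollary~\ref{C3}, Lemma~\ref{LuniH}) is not needed at the point of substitution---those results were used to \emph{prove} Corollary~\ref{MCC}, not to apply it. Second, your phrase ``pull $\mathcal{F}$ back approximately to a finite subset $\mathcal{F}'\subset C$ via the identification of invariants'' is imprecise: the isomorphism of invariants gives no element-level lift $B\to C$, and the maps $L_n,h_n$ go only from $C$ (or $C_n$) into $B$. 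Whatever mechanism in \cite{GLIII} makes a general finite $\mathcal{F}\subset B$ accessible via $h$ (tracial largeness, an intertwining, or an embedding of $B$ into a limit built from $C$) must be read from that proof; your sketch of this step does not quite stand on its own.
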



 \providecommand{\href}[2]{#2}

   \hspace{0.2in}
   
   \noindent
   hlin@uoregon.edu

      \end{document}